\documentclass[11pt,a4paper]{amsart}
\usepackage[english]{babel}
\usepackage{amsmath,amsthm,amssymb,mathrsfs}

\usepackage{aliascnt}
\usepackage{esint} 

\usepackage{enumitem}
\usepackage{multicol}
\usepackage{graphicx}
\usepackage{float}
\usepackage{xcolor}
\usepackage{tcolorbox}
\usepackage{url}
\usepackage{breakurl}
\usepackage{microtype}
\microtypesetup{expansion=false}

\usepackage[colorlinks=true,linkcolor=blue,citecolor=red]{hyperref}

\usepackage[nameinlink]{cleveref}

\allowdisplaybreaks

\numberwithin{equation}{section}

\theoremstyle{plain}
\newtheorem{theorem}{Theorem}[section]

\newaliascnt{lemma}{theorem}
\newtheorem{lemma}[lemma]{Lemma}
\aliascntresetthe{lemma}
\crefname{lemma}{lemma}{lemmas}

\newaliascnt{proposition}{theorem}
\newtheorem{proposition}[proposition]{Proposition}
\aliascntresetthe{proposition}
\crefname{proposition}{proposition}{propositions}

\newaliascnt{corollary}{theorem}
\newtheorem{corollary}[corollary]{Corollary}
\aliascntresetthe{corollary}
\crefname{corollary}{corollary}{corollaries}

\theoremstyle{definition}

\newaliascnt{definition}{theorem}
\newtheorem{definition}[definition]{Definition}
\aliascntresetthe{definition}
\crefname{definition}{definition}{definitions}

\newaliascnt{example}{theorem}
\newtheorem{example}[example]{Example}
\aliascntresetthe{example}
\crefname{example}{example}{examples}

\newaliascnt{assumption}{theorem}

\aliascntresetthe{assumption}
\crefname{assumption}{assumption}{assumptions}

\newaliascnt{question}{theorem}

\aliascntresetthe{question}
\crefname{question}{question}{questions}

\theoremstyle{remark}

\newaliascnt{remark}{theorem}
\newtheorem{remark}[remark]{Remark}
\aliascntresetthe{remark}
\crefname{remark}{remark}{remarks}

\newaliascnt{observation}{theorem}

\aliascntresetthe{observation}
\crefname{observation}{observation}{observations}

\newaliascnt{convention}{theorem}

\aliascntresetthe{convention}
\crefname{convention}{convention}{conventions}

\crefformat{equation}{(#2#1#3)}
\Crefformat{equation}{(#2#1#3)}

\crefrangeformat{equation}{(#3#1#4)--(#5#2#6)}
\Crefrangeformat{equation}{(#3#1#4)--(#5#2#6)}

\crefmultiformat{equation}{(#2#1#3)}{ and~(#2#1#3)}{, (#2#1#3)}{ and~(#2#1#3)}
\Crefmultiformat{equation}{(#2#1#3)}{ and~(#2#1#3)}{, (#2#1#3)}{ and~(#2#1#3)}

\newcommand{\RR}{\mathbb{R}}

\newcommand{\NN}{\mathbb{N}}

\newcommand{\BB}{\mathbb{B}}
\newcommand{\Om}{\Omega}
\newcommand{\Ombar}{\overline{\Omega}}

\newcommand{\HH}{\mathcal{H}}

\newcommand{\W}[1]{W^{#1}}
\newcommand{\Wdot}[1]{\dot{W}^{#1}}
\newcommand{\M}[1]{M^{#1}}
\newcommand{\Mdot}[1]{\dot{M}^{#1}}

\newcommand{\I}[1]{\int_{#1}}

\newcommand{\Ia}[1]{\fint_{#1}}

\renewcommand{\d}{\,d}

\newcommand{\leqa}{\lesssim}

\newcommand{\diam}{\mathrm{diam}}
\newcommand{\dist}{\mathrm{dist}}
\newcommand{\supp}{\mathrm{supp}}
\newcommand{\loc}{\mathrm{loc}}

\newcommand{\lp}{\left(}
\newcommand{\rp}{\right)}
\newcommand{\lb}{\left[}
\newcommand{\rb}{\right]}
\newcommand{\lc}{\left\{}
\newcommand{\rc}{\right\}}
\newcommand{\abs}[1]{\left|#1\right|}

\begin{document} 
	\title{How to recognise extension domains} 
	
	\author{$^1$Riddhi Mishra}
	\email{$^1$riddhi.r.mishra@jyu.fi}
	\address{$^1$Department of Mathematics and Statistics, University of Jyv\"askyl\"a,  P.O. Box 35, FI-40014, Jyv\"askyl\"a, Finland}
	
	\author{$^2$Kaushik Mohanta} 
	\email{$^2$kaushik@iitgoa.ac.in}
	\address{$^2$School of Mathematics and Computer Science, Indian Institute of Technology Goa, Ponda-403401, Goa, India}
	
	\thanks{The First-named author was supported by the Academy of Finland (Project No. 323960) and the Research Council of Finland Centre of Excellence in Randomness and Structures (Project number 364210). The second-named author was supported by the grant GA\v{C}R P201/24-10505S.}

	\keywords{Bourgain-Brezis-Mironescu inequality, fractional Sobolev spaces, extension domains, Ahlfors regular domains, measure density condition, Poincar\'e inequalities, Whitney decomposition} 
	
	\subjclass[2020]{Primary 46E35; Secondary 26D10, 28A75, 30L05.}
	
	\begin{abstract}
		Let $\Omega \subset \mathbb{R}^n$ be a bounded domain and $1 < p < \infty$. 
		We prove that there is a bounded extension operator $\dot{W}^{1,p}(\Omega)\to \dot{W}^{1,p}(\mathbb{R}^n)$ if and only if $\Omega$ satisfies the measure density condition and a Bourgain-Brezis-Mironescu type inequality (or limiting formula). As a key ingredient, we establish a fractional Poincaré-type inequality 
		under the assumption of Ahlfors regularity alone, improving a result of 
		Ponce (2004). 
		
		We also prove that, under a mild Hausdorff 
		measure condition on the boundary $\partial \Omega$, fractional 
		extension (from $\dot{W}^{1,p}(\Omega)$ to $\dot{W}^{s,p}(\mathbb{R}^n)$) 
		at a single exponent $s > 1/p$ self-improves to full first-order Sobolev 
		extension (from $\dot{W}^{1,p}(\Omega)$ to $\dot{W}^{1,p}(\mathbb{R}^n)$).
		
		These results clarify the role of nonlocal estimates in the geometry of 
		Sobolev extension domains.
	\end{abstract}
	\maketitle 
	
	\section{Introduction} 
	In this article, we aim to study the relation between Sobolev extension properties of a given bounded open set $\Om\subseteq  \RR^n$ and the relation between Sobolev and fractional Sobolev spaces. The problem of finding a function $Ef:\RR^n\to \RR$, for a given function $f:\Om\subseteq  \RR^n\to \RR$ such that $Ef|_{\Om}=f$ and $Ef$ preserves the `regularity' of $f$ is a fundamental theme in analysis.
	
	Formally, given $1\leq p\leq \infty$ and $s\in(0,1]$, we say that $\Om$ is a $(1,s,p)$-extension domain if there exists a bounded operator $E:\Wdot{1,p}(\Om)\to \Wdot{s,p}(\RR^n)$ such that $Ef|_{\Om}=f$ for all $f\in \Wdot{1,p}(\Om)$ -- the homogeneous Sobolev space; for $s\in (0,1)$, the homogeneous fractional Sobolev space $\Wdot{s,p}(\Om)$ can be viewed as the fractional counterpart of $W^{1,p}(\Om)$. One of the main goals of this paper is to characterise bounded $(1,1,p)$-extension domains (written as $(1,p)$-extension domains for brevity) in terms of the relation between $\Wdot{1,p}(\Om)$ and $\Wdot{s,p}(\Om)$ (or $\Wdot{s,p}(\RR^n)$) --the fractional order homogeneous Sobolev spaces. 
	
	The search for characterisations of $(1,p)$-extension domains has spanned several decades, evolving from simple boundary smoothness conditions to deep connections with geometric measure theory and functional inequalities. Calder\'on \cite{calderon} (for $1<p<\infty$) and Stein \cite{stein} (for $p=1$ and $p=\infty$) have shown that Lipschitz domains are $(1,p)$-extension domains for $1\leq p\leq \infty$. Later, Jones \cite{Jones} proved that the more general $(\varepsilon, \delta)$-domains are also $(1,p)$-extension domains. Haj{\l}asz-Koskela-Tuominen \cite{HKT-8} showed that a domain $\Om$ is a $(1,p)$-extension domain if and only if it satisfies measure density condition and $\W{1,p}(\Om)=\M{1,p}(\Om)$ (see \Cref{preli} for definitions). Some other related characterisations were given by the same authors in \cite{HKT}. Some geometric characterisations of $(1,p)$-extension domains can be found in \cite{Jones,Koskela,Shvartsman,KoRaZh}. For more discussion regarding examples of extension domains, see \cite{Mazya2}.
	
	In their seminal work, Bourgain, Brezis, and Mironescu \cite{BoBrMi} established a fundamental connection between nonlocal functionals and classical local derivatives (see also subsequent works such as \cite{Davila,Brezis2002,BBM02}). Specifically, they showed that for a smooth bounded domain $\Om\subseteq  \RR^n$, and $f\in L^p(\Om)$, the following holds:
	$$
	\lim_{s\to 1-} (1-s)\I{\Om}\I{\Om}\frac{|f(x)-f(y)|^p}{|x-y|^{n+sp}}\d y \d x
	= K(n,p) \I{\Om} |\nabla f|^p \d x.
	$$
	Its primary significance lies in its ability to recover the $W^{1,p}$-seminorm as a limit of fractional Gagliardo-type seminorms, effectively bridging the gap between nonlocal and local theories. This identity, commonly referred to as the \textbf{BBM formula} in literature, has become a cornerstone in the study of Sobolev spaces and nonlocal operators \cite{Ponce04,AmDeMa,Nguyen11,Ludwig,DiRoVa,BrJa}.

	A key ingredient in the proof of the BBM formula is the accompanying \textbf{BBM inequality}: For a fixed $s_0\in(0,1)$, $s\in (s_0,1)$, and $f\in \W{1,p}(\Om)$, we have
	$$
	(1-s)\I{\Om}\I{\Om}\frac{|f(x)-f(y)|^p}{|x-y|^{n+sp}}\d y \d x
	\leq C(n,p,\Om) \I{\Om} |\nabla f|^p \d x.
	$$
	While the original formulation in \cite{BoBrMi} includes an additional $\|f\|_{L^p(\Om)}$ term on the right-hand side, it can be ignored in view of \cite[Corollary 4.9]{Heronkoskela} which says that a bounded domain is a nonhomogeneous Sobolev extension domain if and only if it is a homogeneous Sobolev extension domain
	
	While the original result of Bourgain-Brezis-Mironescu  was stated for smooth domains, the role of the domain's geometry remained a subject of significant interest. For instance, it was later observed by Drelichman and Duran \cite{DreDur} that the BBM-type limit can be recovered for an arbitrary bounded domain if the nonlocal seminorm is suitably restricted to a neighborhood of the diagonal ($x=y$), thereby bypassing the need for the extension property. These developments lead to a fundamental question: to what extent does the validity of the BBM inequality characterise the geometry of the domain? Specifically, one might ask whether the BBM inequality (or BBM formula) for $\Om$ necessarily implies that $\Om$ is a $(1,p)$-extension domain. In this article, we show that this is false for general domains (see \Cref{rm,cusp-example} below), but if we restrict ourselves within the class of domains satisfying the measure density condition, BBM inequality, BBM formula and $(1,p)$-extension property-- all becomes equivalent.

	We now proceed to state our first main result	
	\begin{theorem}\label{th-main} 
		Let $\Om \subseteq \RR^n$ be a bounded open set, $p\in (1,\infty)$. Then the following are equivalent\\ 
		\textbf{\emph{(1)}} $\Om$ is a $(1,p)$-extension domain.\\ 
		\textbf{\emph{(2)}} $\Om$ satisfies measure density condition, and there exists $s_0\in(0,1)$ such that
		\begin{equation}\label{bbm-hom}
			(1-s)[f]_{W^{s,p}(\Om)}^p
			\leqa [f]_{W^{1,p}(\Om)}^p,
		\end{equation}
		for any $s\in(s_0,1)$ and $f\in \Wdot{1,p}(\Om)$.\\
		\textbf{\emph{(3)}} $\Om$ satisfies measure density condition, and there is some $s_0\in(0,1)$ such that
		\begin{equation}\label{bbm}
			(1-s)[f]_{W^{s,p}(\Om)}^p
			\leqa \|f\|_{W^{1,p}(\Om)}^p,
		\end{equation}
		for any $s\in (s_0,1)$ and $f\in \W{1,p}(\Om)$.\\
		\textbf{\emph{(4)}} $\Om$ satisfies measure density condition, and
		\begin{equation}\label{bbm-lim-h}
			\lim\limits_{s\to 1-}(1-s)[f]_{W^{s,p}(\Om)}^p
			=K(n,p) [f]_{W^{1,p}(\Om)}^p,
		\end{equation}
		for any $f\in \Wdot{1,p}(\Om)$.\\
		\textbf{\emph{(5)}} $\Om$ satisfies measure density condition, and
		\begin{equation}\label{bbm-lim-n}
			\lim\limits_{s\to 1-}(1-s)[f]_{\W{s,p}(\Om)}^p
			=K(n,p) [f]_{\W{1,p}(\Om)}^p,
		\end{equation}
		for any $f\in \W{1,p}(\Om)$.\\
	\end{theorem}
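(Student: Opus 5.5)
We prove the chain $(1)\Rightarrow(4)\Rightarrow(5)\Rightarrow(3)\Rightarrow(2)\Rightarrow(1)$, together with the shortcuts $(1)\Rightarrow(2)$ and $(4)\Rightarrow(2)$ where convenient.

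\textbf{$(1)\Rightarrow$ the others.} By Haj{\l}asz--Koskela--Tuominen \cite{HKT-8}, every $(1,p)$-extension domain satisfies the measure density condition. Given $f\in\Wdot{1,p}(\Om)$, extend it to $Ef\in\Wdot{1,p}(\RR^n)$ and multiply by a cutoff equal to $1$ near $\Ombar$, subtracting a constant first; boundedness of $\Om$ and a Poincaré inequality on a ball make this harmless. Then apply the whole-space BBM inequality:
\[
(1-s)[f]^p_{W^{s,p}(\Om)}\le(1-s)[Ef]^p_{W^{s,p}(\RR^n)}\lesssim \|\nabla Ef\|^p_{L^p}\lesssim [f]^p_{W^{1,p}(\Om)},
\]
which gives (2). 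For (4), we use the Drelichman--Durán localisation \cite{DreDur}: the part of the double integral with $|x-y|<\delta$ already tends to $K(n,p)[f]^p$. For the part with $|x-y|\ge\delta$, the integrand is bounded by $2^{p}|x-y|^{-n-sp}(|f(x)|^p+|f(y)|^p)$, so this part stays bounded and the factor $(1-s)$ kills it. The implication (4)$\Rightarrow$(5) is immediate since $\W{1,p}\subset\Wdot{1,p}$. For (5)$\Rightarrow$(3), fix $f\in\W{1,p}(\Om)$; the limit shows that $\sup_{s\in(s_0,1)}(1-s)[f]^p_{W^{s,p}}$ is finite for each $f$. To turn this into a uniform bound we apply the uniform boundedness principle (Banach--Steinhaus) to the family of sublinear maps $f\mapsto (1-s)^{1/p}[f]_{W^{s,p}(\Om)}$ on the Banach space $\W{1,p}(\Om)$. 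Each map is finite, since $f\in L^p$ and the tail is integrable on a bounded set, and lower semicontinuous, by Fatou. This yields \eqref{bbm}.

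\textbf{(3)$\Rightarrow$(2).} Apply \eqref{bbm} to $f-f_\Om$. Since $\Om$ need not be a Poincaré domain a priori, we need $\|f-f_\Om\|_{L^p}\lesssim\|\nabla f\|_{L^p}$. An alternative is to argue as in \cite[Corollary 4.9]{Heronkoskela}. A cleaner route is to show (3)$\Rightarrow$(1) directly and then recover (2) from the first paragraph.

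\textbf{(3) $\Rightarrow$ (1): the main obstacle.} The plan is to use the HKT characterisation, reducing the problem to showing $\W{1,p}(\Om)=\M{1,p}(\Om)$, i.e.\ a pointwise Haj{\l}asz-gradient bound
\[
|f(x)-f(y)|\le|x-y|\,(g(x)+g(y)),\qquad g\in L^p.
\]
The measure density condition makes $\Om$ (with Lebesgue measure) Ahlfors $n$-regular. Combined with \eqref{bbm} at a fixed $s\in(s_0,1)$, this gives $f\in\Wdot{s,p}(\Om)$, and then a fractional Poincaré inequality on balls $B(x,r)\cap\Om$ holds from Ahlfors regularity alone (the paper's announced improvement of Ponce). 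We want this in the form
\[
\Ia{B(x,r)\cap\Om}|f-f_{B}|\lesssim r\Big(\Ia{B(x,\lambda r)\cap\Om}|\nabla f|^p\Big)^{1/p}.
\]
To get this first-order scaling, apply \eqref{bbm} on rescaled pieces $B\cap\Om$ and let $s\to1$, using $(1-s)$-weighted bounds whose constants are uniform in $s$. The difficulty is that \eqref{bbm} is global on $\Om$, not on the balls $B\cap\Om$. I would try to localise it via a Whitney-type decomposition plus a scaling argument, exploiting that \eqref{bbm} holds uniformly in $s$ near $1$, so that the limit recovers the first-order Poincaré inequality. Once this local Poincaré inequality is available, a standard telescoping argument over balls shrinking to $x$ and to $y$ gives the Haj{\l}asz bound with $g=(M|\nabla f|^q)^{1/q}$ for some $q<p$, where $M$ is the maximal operator on the doubling space $\Om$. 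Maximal function boundedness then gives $g\in L^p$, so $f\in\M{1,p}(\Om)$, and \cite{HKT-8} yields (1).
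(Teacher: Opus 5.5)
Your proof has a genuine gap at $(3)\Rightarrow(1)$. In your scheme, (2), (4) and (5) reach (1) only through this implication, so the whole equivalence rests on it.

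Your plan needs a first-order local Poincar\'e inequality on every ball, and nothing you have produces one. Inequality (3) is a single global estimate on $\Om$, and $B\cap\Om$ is not a rescaled copy of $\Om$. The natural way to feed local information into (3) is to apply it to $\psi(f-c)$, with $\psi$ a cut-off supported in $\lambda B$. But that puts $r^{-p}\|f-c\|^p_{L^p(\lambda B\cap\Om)}$ on the right-hand side with a constant that is not small. That is exactly the quantity you are trying to bound, now on a larger ball, so the proposed Whitney-plus-scaling localisation is circular.

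There is also an exponent mismatch. Your displayed inequality has $|\nabla f|^p$ on the right, and telescoping then gives $g=(\mathcal{M}|\nabla f|^p)^{1/p}$, which is only in weak $L^p$. The version with $q<p$ that you invoke is a further self-improvement for which you have no mechanism.

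The paper avoids both a first-order local Poincar\'e inequality and $M^{1,p}$. It uses the Whitney extension $Ef=\sum_Q f_{B_Q^\star}\varphi_Q$, with $B_Q^\star=B(x_Q^\star,\diam Q)\cap\Om$. In \Cref{frac-extension} it shows $[Ef]_{W^{s,p}(\BB)}\le C[f]_{W^{s,p}(\Om)}$ with $C$ independent of $s\in(s_0,1)$. Two ingredients make this work. Measure density gives $|B_Q^\star|\approx|20Q\cap\Om|\approx|Q|$. The factor $(1-s)$ in \Cref{Ponce1} cancels the $1/(1-s)$ coming from $\int_{X_2(x)}|x-y|^{p-n-sp}\,dy$ for nearby pairs outside $\Om$. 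Combined with (3), this gives $\liminf_{s\to1}(1-s)[Ef]^p_{W^{s,p}(\BB)}\lesssim\|f\|^p_{W^{1,p}(\Om)}$. The BBM criterion on a ball (\Cref{bbm-formula}) then yields $Ef\in W^{1,p}(\BB)$. The $L^p$ bound comes from $|Ef|\lesssim\mathcal{M}v$, with $v$ the zero extension of $|f|$.

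If you want to keep your Haj{\l}asz route, keep the fractional scaling instead of passing to first order. \Cref{Ponce1} and telescoping give $|f(x)-f(y)|\lesssim|x-y|^s(g_s(x)+g_s(y))$, where $g_s=(1-s)^{1/p}\mathcal{M}(G_s\chi_\Om)$ and $G_s(y)^p=\int_\Om|f(y)-f(z)|^p|y-z|^{-n-sp}\,dz$. Then $\|g_s\|^p_{L^p}\lesssim(1-s)[f]^p_{W^{s,p}(\Om)}$ is bounded by (3). A Mazur argument as $s\to1$ then produces an $L^p$ Haj{\l}asz gradient.

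Two smaller points:
\begin{itemize}
\item Your $(5)\Rightarrow(3)$ via the uniform boundedness principle for lower semicontinuous seminorms is a valid alternative to the paper's direct $(5)\Rightarrow(1)$. However, finiteness for $s$ away from $1$ comes from comparing with an exponent closer to $1$ on the bounded set, not from $f\in L^p$.
\item In $(1)\Rightarrow(4)$, the Drelichman--Dur\'an localisation is to $|x-y|<\tau\,\dist(x,\partial\Om)$, not to a fixed $\delta$. The upper bound must therefore come from the extension together with $|\partial\Om|=0$.
\end{itemize}
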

	For the definition of measure density condition, see \Cref{measure-density-def}.
	\begin{remark}\label{rm}
		In \Cref{th-main}, in the implication \textbf{(2)} $\implies$ \textbf{(1)}, neither of hypotheses measure density condition \cref{measure-density} and BBM inequality \cref{bbm} can be dropped. To understand this, we look at two examples: the interior cusp and the exterior cusp. We know that both are non-extension domains. In \cite{NePaEn}, it is shown that the interior cusp does not support \cref{bbm} (for $n=2$ and $p>2$), but it can be checked that it satisfies \cref{measure-density}. On the other hand, we have shown in \Cref{cusp-example} that the exterior cusp does not satisfy measure density condition \cref{measure-density}, but it supports BBM inequality \cref{bbm}.
	\end{remark}
	
	\begin{remark}
		The original proof of Bourgain-Brezis-Mironescu follows a path that says: in a $(1,p)$-extension domain, (3) implies (5). However, it is not clear whether (3) is equivalent with (5) or, similarly, (2) with (4). Our \Cref{th-main} clarifies the scenario in the case of domains satisfying the measure density condition.
	\end{remark}
	
	A key ingredient in the proof of \Cref{th-main} is our next result, \Cref{Ponce1}, which can be compared to a result due to Ponce \cite[Theorem 1.1]{ponce}. Both \Cref{Ponce1} and \cite[Theorem 1.1]{ponce} have a similar conclusion, but in \cite{ponce} the hypothesis requires the domain to be a $(1,p)$-extension domain, whereas we require it to only satisfy measure density condition. Recall that there are domains, satisfying measure density condition, which are not $(1,p)$-extension domains, for example, consider a slit-disk $B(0,1)\setminus (\{0\}\times[0,1)$ in $\RR^2$. Thus, \Cref{Ponce1} can be seen as an improvement of \cite[Theorem 1.1]{ponce}, with a proof without using heavy machinery. Also, we prove the result in $W^{s,p}_q$-space setup, which is a generalisation (along the Triebel-Lizorkin scales ) of fractional Sobolev spaces $W^{s,p}$; more information regarding such spaces can be found in \cite{PrSa}.
	\begin{theorem}\label{Ponce1}
		Let $\Om$ be a bounded domain satisfying the measure density condition, $1\leq p \leq q<\infty$, and $s_0\in \lp \max{\{\frac{q-n}{q},0\}},1\rp $. Then, for any $s\in (s_0,1)$ and any $f \in L^{p}(\Om)$, we have
		\begin{equation}
			\Ia{B\cap \Om}|f(x)-f_{B\cap\Om}|^p\d x  \leq C(n,p,q)(1-s)^\frac{p}{q} \diam(B)^{sp}\Ia{B\cap \Om}\lp  \I{B\cap\Om}\frac{|f(x)-f(y)|^q}{|x-y|^{n+sq}}\d x \rp ^\frac{p}{q} \d y 
		\end{equation}
		for any ball $B$.
	\end{theorem}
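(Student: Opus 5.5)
The plan is to reduce the inequality to a \emph{scale-comparison estimate} for truncated double integrals, and to get the factor $(1-s)^{p/q}$ by integrating that estimate over scales. Fix a ball $B$ with $R=\diam(B)$ and $B\cap\Om\neq\emptyset$. Write $A=B\cap\Om$, and for $y\in A$ and $0<r\le R$ set
\[
\Phi_y(r)=\I{A\cap B(y,r)}|f(x)-f(y)|^q\d x .
\]
The right-hand side contains the inner integral $\I{A}\frac{|f(x)-f(y)|^q}{|x-y|^{n+sq}}\d x$. Writing $|x-y|^{-n-sq}=(n+sq)\I{|x-y|}^{\infty} r^{-n-sq-1}\d r$ and using Fubini, this inner integral is comparable to $\I{0}^{R} \Phi_y(r)\, r^{-n-sq-1}\d r$, up to an additive term controlled by $\Phi_y(R)R^{-n-sq}$.

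\textbf{Step 1 (main obstacle): almost-monotonicity in the scale.} The key estimate to establish is
\[
\frac{\Phi_y(r)}{r^{n+q}}\;\gtrsim\;\frac{\Phi_y(R)}{R^{n+q}}\qquad (0<r\le R),
\]
with a constant depending only on $n,q$ and the measure density constant. It may be necessary to average in $y$, that is, to prove the estimate for $\I{A}\Phi_y(r)\d y$, and to keep the outer $L^{p/q}$-structure for the end. If this holds, then
\[
\I{0}^R \Phi_y(r) r^{-n-sq-1}\d r\gtrsim \Phi_y(R)R^{-n-q}\I{0}^R r^{q(1-s)-1}\d r=\frac{\Phi_y(R)R^{-sq}}{q(1-s)},
\]
which produces exactly the factor $(1-s)^{-1}$. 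To prove the monotonicity I would use an $N$-step chain with $N\approx R/r$: $|f(x)-f(y)|^q\le N^{q-1}\sum_i|f(z_i)-f(z_{i+1})|^q$. Each $z_i$ is averaged over a set $A\cap B(w_i,r)$, and the measure density condition $|B(w,r)\cap\Om|\gtrsim r^n$ for $w\in\Om$ makes the averaging cost independent of $N$. Counting gives $N^{q-1}\cdot N\cdot N^{n}$, i.e.\ the sharp exponent $n+q$. The difficulty is that the segment $[x,y]$ may leave $\Om$, and measure density only applies at centres in $\Om$. So I would try first to build the chain points $w_i$ inside $\Om$ only in an averaged sense, by first replacing $|f(x)-f(y)|$ with $|f(x)-f_{A\cap B(x,r)}|$-type quantities. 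Measure density (in particular Ahlfors regularity $|A|\simeq R^n$) is exactly what makes these averages comparable to the pointwise differences. The precise requirement $s_0>\max\{(q-n)/q,0\}$ suggests that an exponent $n+sq>q$ integrability condition enters here, when a Riesz-type potential $\I{A}|x-z|^{-n-sq+q}$ is summed; I expect to use it at this point.

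\textbf{Step 2: the Poincaré part.} By Jensen and measure density,
\[
\Ia{A}|f-f_A|^p\le \Ia{A}\Big(\Ia{A}|f(x)-f(y)|^q\d x\Big)^{p/q}\d y\lesssim \Ia{A}\big(R^{-n}\Phi_y(R)\big)^{p/q}\d y .
\]
Here $\Phi_y(R)$ is the full integral over $A$, since $A\subset B(y,R)$.

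\textbf{Step 3: combine and raise to the power $p/q$.} Inserting Step 1 gives $R^{-n}\Phi_y(R)\lesssim (1-s)R^{sq-n}\cdot R^{n}\I{A}\frac{|f(x)-f(y)|^q}{|x-y|^{n+sq}}\d x\cdot R^{-n}$, which after simplification is $(1-s)R^{sq}$ times the inner integral. Raising to the power $p/q$ and averaging in $y$ yields the claimed bound with $(1-s)^{p/q}\diam(B)^{sp}$. The additive term from the Fubini step is handled in the same way, since $R^{-sq}\le R^{-sq}(1-s)^{-1}$ trivially.
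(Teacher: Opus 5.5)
Your Steps 2 and 3 are fine. The whole argument, however, rests on Step 1, and Step 1 is false.

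\textbf{Step 1 fails.} Pointwise in $y$ this is trivial: if $f$ is constant on $A\cap B(y,r)$ but not on $A$, then $\Phi_y(r)=0<\Phi_y(R)$. The averaged form $\I{A}\Phi_y(r)\d y\gtrsim (r/R)^{n+q}\I{A}\Phi_y(R)\d y$ amounts to a Poincar\'e inequality at every scale, and measure density does not imply it.

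Take $n=2$ and $\Om_\delta=(-1,0)^2\cup(0,1)^2\cup B(0,\delta)$; its measure density constant does not depend on $\delta$. Let $B=B(0,1)$ and $f(x)=\psi((x_1+x_2)/\delta)$ with $\psi(t)=\min\{1,\max\{0,(t+1)/2\}\}$. A pair $(x,y)$ with $f(x)\neq f(y)$ does one of two things. Either it lies in opposite quarter discs, where $|x-y|\geq (|x|+|y|)/\sqrt{2}$. Or it meets the strip $\{|x_1+x_2|<\delta\}\cap A$, which has area $\lesssim\delta^2$. Hence $\I{A}\Phi_y(r)\d y\lesssim r^4+\delta^2r^2$, while $\I{A}\Phi_y(R)\d y\approx 1$. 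For $q<2$ and $r=\sqrt{\delta}$, the ratio in Step 1 therefore tends to $0$ as $\delta\to0$.

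This is exactly where your chain argument breaks. Measure density makes $A\cap B(w,r)$ large for $w\in\Om$, but not connected at scale $r$. Every chain from one quarter disc to the other must squeeze through the neck $B(0,\delta)$.

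\textbf{The statement itself fails.} The same example refutes the inequality you are asked to prove, so no route can close this gap, at least for $n=2$ and $p=q<2$. The left-hand side stays bounded below. In $\I{A}\I{A}|f(x)-f(y)|^p|x-y|^{-2-sp}\d x\d y$, the opposite-quarter pairs contribute $\lesssim 1/(2-sp)$ and the pairs meeting the strip contribute $\lesssim \delta^{2-sp}/(1-s)$. So the right-hand side is $\lesssim (1-s)/(2-p)+\delta^{2-sp}$, which tends to $0$ as $s\to1$ and $\delta\to0$. Placing scaled copies with necks $\delta_k=o(2^{-k})$ along a chain of squares of side $2^{-k}$ gives a single bounded domain with measure density on which the inequality fails.

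The paper's proof contains the corresponding error. The factor $(1-s)$ is obtained legitimately in the near part $I_1$, which is where $n-q+sq>0$ is used. In $I_2$, however, the integration is over $|y-z|\geq t$, so replacing $|y-z|^{n+sq-q}$ by $t^{n+sq-q}$ goes the wrong way. Done correctly, the factor $(1-s)$ cancels in $I_2$.

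What measure density alone gives, for balls centred in $\Om$, is the bound without the factor $(1-s)^{p/q}$. That follows from your Step 2 together with $|x-y|\le\diam(B)$. The sharp factor needs an extra connectivity assumption, such as Ponce's extension-domain hypothesis.
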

	As a direct consequence of \Cref{Ponce1}, we can deduce the following, which we list here as it may be of interest to the readers.
	\begin{corollary}
		Let $p\in (1,\infty)$, $s_0\in \lp \max\lc \frac{p-n}{p},0\rc ,1\rp $, and $s\in(s_0,1)$. For a bounded domain $\Om$, for any $f\in L^p(\Om)$,
		\begin{align*}
			\Ia{y\in\Om}  &\abs{ f(y)-f_{\Om}} ^p \d y
			\leq C(n,p)\frac{(1-s)\text{diam}(\Om)^{sp+n}}{|\Om|}  \Ia{y\in\Om} \I{\Om} \frac{|f(y)-f(z)|^p}{|y-z|^{n+sp}}  \d z \d y.
		\end{align*}
	\end{corollary}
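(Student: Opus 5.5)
The plan is to obtain the corollary from \Cref{Ponce1} with $q=p$, applied to a single ball that contains $\Om$. Note that $s_0>\max\{\frac{p-n}{p},0\}$ is exactly the hypothesis of \Cref{Ponce1} when $q=p$. Fix $x_0\in\Om$ and let $B=B(x_0,\diam(\Om))$. Then $\Om\subseteq B$, so $B\cap\Om=\Om$ and $\diam(B)=2\diam(\Om)$. For this choice the left-hand side of \Cref{Ponce1} is exactly $\Ia{\Om}|f-f_\Om|^p$. The right-hand side becomes
\[
C(n,p)(1-s)\,(2\diam(\Om))^{sp}\Ia{\Om}\I{\Om}\frac{|f(y)-f(z)|^p}{|y-z|^{n+sp}}\d z\d y .
\]
This is the claimed estimate, except that the factor $\diam(\Om)^n/|\Om|$ must be accounted for. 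The reason is that $\Om$ is now an arbitrary bounded domain, not one satisfying the measure density condition.

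So the real work is to revisit the proof of \Cref{Ponce1} and check how the measure density condition enters. I expect it enters only through lower bounds of the form $|B'\cap\Om|\geq c|B'|$, used to pass from averages over $B'\cap\Om$ to integrals with the normalisation $|B'|$. The key point is that for our ball every relevant set $B'\cap\Om$ is contained in $\Om$. If the argument can be organised so that all averages are taken over the fixed set $\Om=B\cap\Om$, then the only density quantity needed is
\[
\frac{|B|}{|B\cap\Om|}=\frac{\omega_n\diam(\Om)^n}{|\Om|}.
\]
This quantity is finite for any bounded domain, and substituting it for the density constant produces exactly the factor $\diam(\Om)^{n}/|\Om|$. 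Concretely, I would first use Jensen to bound $|f(y)-f_\Om|^p\leq \Ia{\Om}|f(y)-f(z)|^p\d z$. I would then run the scale decomposition of \Cref{Ponce1} in the variable $h=z-y$ (dyadic annuli $2^{-k-1}\diam(\Om)\le|h|<2^{-k}\diam(\Om)$), always integrating over $z\in\Om$. The factor $(1-s)$ should come from summing the geometric series $\sum_k 2^{-k(1-s)p}\approx (1-s)^{-1}$ against the $W^{1,\cdot}$-type gain at each scale, or equivalently from comparing the different scales. Normalising each scale by $|\Om|$ instead of by $|B'\cap\Om|$ is what costs the factor $\diam(\Om)^n/|\Om|$.

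The main obstacle is precisely this bookkeeping: making sure the proof of \Cref{Ponce1} never needs density at scales smaller than $\diam(\Om)$, for instance through averages over small balls $B(y,r)\cap\Om$ centred at points of $\Om$. If it does, I would first try to restructure those steps so that the small-scale averages are replaced by integrals over annuli in $h$ of functions defined on all of $\Om$. Such annular integrals only need $|\{z\in\Om\}|$ in the denominator, never $|B(y,r)\cap\Om|$. The lower bound $s_0>\frac{p-n}{p}$ is where I would expect convergence of the resulting series (the condition $n+sp-p>0$) to be used, exactly as in \Cref{Ponce1}.
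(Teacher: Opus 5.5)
\textbf{There is a genuine gap, and it cannot be repaired: for $n\ge 2$ the stated inequality is false.} Your reduction is the one the paper intends. The corollary is announced as a direct consequence of \Cref{Ponce1}: take $q=p$ and a ball $B\supseteq\Omega$, so that density enters only through $r^n/|B\cap\Omega|$. The step you defer as ``bookkeeping'' is extracting the factor $(1-s)$ from information about $\Omega$ at the top scale only, and this is exactly what fails.

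\textbf{Counterexample.} Let $\Omega_\varepsilon$ be the union of $B(\pm 2e_1,1)$ and the tube $T_\varepsilon=\{|x_1|<\tfrac32,\ |x'|<\varepsilon\}$, and set $f(x)=\min\{\max\{x_1+\tfrac12,0\},1\}$.
\begin{itemize}
\item We have $\diam(\Omega_\varepsilon)=6$ and $|\Omega_\varepsilon|\approx 2|B(0,1)|$.
\item The left-hand side is at least $2^{1-p}/3$, since $f=0$ on one ball and $f=1$ on the other.
\item $f(y)\neq f(z)$ only if $y$ or $z$ lies in $T_\varepsilon$, or $y,z$ lie in different balls (and then $|y-z|\ge 2$). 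Also $|f(y)-f(z)|\le\min\{1,|y-z|\}$.
\end{itemize}
Hence
\[
(1-s)[f]_{W^{s,p}(\Omega_\varepsilon)}^p\le 2|T_\varepsilon|\,(1-s)\int_{\mathbb{R}^n}\frac{\min\{1,|h|\}^p}{|h|^{n+sp}}\,dh+C(n)(1-s)\le C(n,p)\bigl(\varepsilon^{n-1}+1-s\bigr).
\]
Choosing $1-s<\varepsilon^{n-1}$ and letting $\varepsilon\to0$, the right-hand side of the corollary tends to $0$, a contradiction. More conceptually, letting $s\to1$ for Lipschitz $f$, the corollary would give a $W^{1,p}$-Poincar\'e inequality with constant controlled by $n,p,\diam(\Omega),|\Omega|$ alone, which dumbbells violate.

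\textbf{Where your scheme breaks.} After Jensen, the dyadic decomposition only produces weights $(2^{-k}\diam\Omega)^{n+sp}$. These are summable uniformly in $s$, so you get the trivial estimate with $(1-s)$ replaced by $1$ (indeed with $C=1$, just from $|y-z|\le\diam\Omega$). The factor $\sum_k 2^{-k(1-s)p}\approx(1-s)^{-1}$ appears only if you can bound large-scale increments by small-scale ones, e.g.
\[
\int|f(y+2h)-f(y)|^p\,dy\le 2^p\int|f(y+h)-f(y)|^p\,dy .
\]
This needs $y+h\in\Omega$ whenever $y,y+2h\in\Omega$, a convexity or chain condition at every scale. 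That is precisely what the thin tube destroys, and it is not encoded in $|\Omega|/\diam(\Omega)^n$.

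\textbf{Why the proof of \Cref{Ponce1} does not help.} You also cannot import the $(1-s)$ from that proof. In the estimate of $I_2$ (with $q=p$) it uses $t^{p-sp-1}|y-z|^{-p}\le t^{n-1}|y-z|^{-n-sp}$ on $\{|y-z|\ge t\}$. This amounts to $|y-z|^{n+sp-p}\le t^{n+sp-p}$, which is reversed because $n+sp-p>0$. Computed correctly,
\[
\int_0^{\min\{r,|y-z|\}}t^{p-sp-1}\,dt=\frac{\min\{r,|y-z|\}^{p-sp}}{p(1-s)},
\]
which cancels the prefactor $p(1-s)$. So $I_2$ is just the left-hand side of \eqref{49} with the weight $\min\{1,|y-z|/r\}^{p-sp}$, and that weight tends to $1$ as $s\to1$. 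Tracing where density enters in that proof therefore cannot yield the corollary. Any correct version needs a constant depending on finer geometry of $\Omega$ and a genuinely multi-scale argument.
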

	Some interesting results related to \Cref{Ponce1} can be found in \cite{HuMaPeVa,MyPeWe}.

	In \Cref{th-main}, the characterisation requires a family of inequalities as $s \to 1-$. While this is mathematically elegant, from a `practical' perspective, very difficult to verify. So, we ask: what happens if the fractional parameter $s$ is fixed? For this, we use a stronger hypothesis than BBM inequality ((2) or (3) of \Cref{th-main}), but for a fixed $s$, namely the $(1,s,p)$-extension property.

	\begin{theorem}\label{th-self-improve}
		Let $p\in(1,\infty)$ and $s\in\lp \frac{1}{p},1\rp $. Then every bounded $(1,s,p)$-extension domain $\Om$, with $\HH^{n-1+\frac{sp-1}{p}}(\partial \Om)=0$, is also a $(1,p)$-extension domain.
	\end{theorem}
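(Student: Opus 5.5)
The plan is to reduce \Cref{th-self-improve} to the Haj{\l}asz--Koskela--Tuominen characterisation \cite{HKT-8}: $\Om$ is a $(1,p)$-extension domain if and only if it satisfies the measure density condition and $\W{1,p}(\Om)=\M{1,p}(\Om)$. So we need two things from the $(1,s,p)$-extension property: (a) the measure density condition, and (b) a pointwise Haj{\l}asz-type estimate $|f(x)-f(y)|\leqa |x-y|(g(x)+g(y))$ with $\|g\|_{L^p(\Om)}\leqa\|\nabla f\|_{L^p(\Om)}$.

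\textbf{Step 1: measure density.} Fix $x\in\Om$ and $r<\diam\Om$. Test the extension operator on a Lipschitz cutoff $f$ which equals $1$ on $B(x,r/2)\cap\Om$, vanishes off $B(x,r)$, and has gradient of order $1/r$. Then $[f]_{\Wdot{1,p}(\Om)}^p\leqa r^{-p}|B(x,r)\cap\Om|$. For the lower bound, $Ef$ equals $1$ on $B(x,r/2)\cap\Om$ and equals $0$ on $\Om\setminus B(x,r)$. Since $\Om$ is bounded and $r$ is small, $\Om\setminus B(x,r)$ has positive measure, which gives a lower bound on the $\Wdot{s,p}$ seminorm via fractional capacity.

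This comparison is cleaner in the standard iterated form, following the Haj{\l}asz--Koskela--Tuominen argument: run it on dyadic scales $r_j=2^{-j}r$, using the fractional Sobolev--Poincar\'e embedding for $Ef$. One obtains $|B(x,r_{j+1})\cap\Om|^{1-\frac{1}{p}\cdot\frac{n-sp}{n}}\leqa\ldots$, which iterates to $|B(x,r)\cap\Om|\gtrsim r^n$. This works when $sp<n$; when $sp\ge n$ one uses H\"older/Morrey continuity of $Ef$ instead. This part should be routine.

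\textbf{Step 2: pointwise estimate.} With measure density in hand, \Cref{Ponce1} (with $q=p$) applies on balls intersected with $\Om$, and combined with the extension it should give a fractional maximal-function bound. Because the extension only controls the Gagliardo seminorm of $Ef$ on $\RR^n$, the natural first attempt is the fractional Haj{\l}asz gradient. Using the $\Wdot{s,p}(\RR^n)$ bound on $Ef$ and the Hedberg-type chaining with \Cref{Ponce1} on $B\cap\Om$ yields, for $x,y\in\Om$,
\[
|f(x)-f(y)|\leqa |x-y|^{s}\,(G(x)+G(y)), \qquad \|G\|_{L^p(\Om)}\leqa [Ef]_{\Wdot{s,p}(\RR^n)}\leqa\|\nabla f\|_{L^p(\Om)}.
\]
This is only an $\Mdot{s,p}$ estimate, so a self-improvement from $s$ to $1$ is needed.

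\textbf{Step 3: upgrade, the main obstacle.} The hypotheses $s>1/p$ and $\HH^{n-1+\frac{sp-1}{p}}(\partial\Om)=0$ enter here. Since $sp>1$, functions in $\Wdot{s,p}(\RR^n)$ have traces on codimension-one sets and have no jumps across hypersurfaces. The Hausdorff condition says $\partial\Om$ is small for the relevant $\Wdot{s,p}$-capacity: sets of $\HH^{n-sp+\ldots}$-type measure zero are removable. I would try to show that the distributional gradient of $Ef$ on $\RR^n$ has no singular part on $\partial\Om$. Concretely, modify $Ef$ outside $\Om$ by a Whitney-type reflection built from averages of $Ef$ on Whitney cubes of $\RR^n\setminus\Ombar$, and estimate the gradient on each exterior Whitney cube $Q$ by $\ell(Q)^{-1}\,\Ia{Q^*}|Ef-(Ef)_{Q^*}|$. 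The crucial part is the contribution of cubes near $\partial\Om$, where the difference quotients must be controlled using values of $f$ inside $\Om$ (from measure density) rather than the fractional seminorm alone. This is where I expect to use the capacity-zero condition, so that the boundary carries no energy and the Whitney gradients sum to $\leqa\|\nabla f\|_{L^p(\Om)}^p$. If this route stalls, the alternative is to prove $\W{1,p}(\Om)=\M{1,p}(\Om)$ directly by a chaining argument: connect $x,y$ through balls meeting $\Om$ in large measure, and use the vanishing Hausdorff measure to ensure chains avoid a negligible part of the boundary. Once Step 3 holds, \cite{HKT-8} concludes.
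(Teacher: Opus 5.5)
\textbf{The gap is Step 1, and it cannot be repaired.} Your cutoff test does not give the measure density condition, because the two sides scale differently. Write $m(\rho):=|B(x,\rho)\cap\Om|$. The cutoff has energy $[f]_{W^{1,p}(\Om)}^p\lesssim r^{-p}m(r)$. The fractional Sobolev inequality, however, bounds $[Ef]_{W^{s,p}(\RR^n)}^p$ from below only by $m(r/2)^{1-\frac{sp}{n}}$ (when $sp<n$). The recursion $r^{-p}m(r)\gtrsim m(r/2)^{1-\frac{sp}{n}}$ is solved by $m(r)=r^{n/s}$, so iterating gives at best $m(r)\gtrsim r^{n/s}$. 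For $sp\ge n$, Morrey gives only $m(r)\gtrsim r^{n+p(1-s)}$. Both are strictly weaker than \cref{measure-density}. Since Step 2 invokes \Cref{Ponce1}, which presupposes measure density, everything downstream collapses with Step 1.

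The loss is genuine, not a technical artefact. Take $n=2$, $1<\lambda<p$, and the outward cusp $\Om_\lambda$ of \Cref{cusp-example}. Near the tip, extend $f$ by $u(|z|)$, where $u(t)$ is the mean of $f$ over $\{x_1=t\}\cap\Om_\lambda$; glue this to $f$ by a reflection in $|x_2|=x_1^\lambda$ and a cutoff at scale $x_1^\lambda$. Since $|u'(t)|^p t^\lambda\lesssim\I{\{x_1=t\}\cap\Om_\lambda}|\nabla f|^p\d\HH^1$, H\"older's inequality gives
\[
\int_0^1|u'(t)|^q\,t\d t\le\lp\int_0^1|u'(t)|^p\,t^\lambda\d t\rp^{\frac qp}\lp\int_0^1 t^{\lp 1-\frac{\lambda q}{p}\rp\frac{p}{p-q}}\d t\rp^{\frac{p-q}{p}}<\infty\qquad\text{whenever } q<\frac{2p}{1+\lambda}.
\]
This is a Maz'ya--Poborchi type extension $W^{1,p}(\Om_\lambda)\to W^{1,q}(\RR^2)$ with loss of integrability. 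Now subtract $f_{\Om_\lambda}$ (power cusps are Poincar\'e domains) and compose with the embedding $\Wdot{1,q}(\RR^2)\hookrightarrow\Wdot{\sigma,p}(\RR^2)$, where $\sigma=1-\frac2q+\frac2p$ and $1<q<p$. This shows that $\Om_\lambda$ is a $(1,\sigma,p)$-extension domain for every $\sigma\in\lp\frac1p,1-\frac{\lambda-1}{p}\rp$; the interval is nonempty because $\lambda<p$. The boundary has finite $\HH^1$-measure, so $\HH^{1+\frac{\sigma p-1}{p}}(\partial\Om_\lambda)=0$. Yet $|B(x,r)\cap\Om_\lambda|\lesssim r^{1+\lambda}$ for $x=(r,0)$, so $\Om_\lambda$ violates \cref{measure-density} and is not a $(1,p)$-extension domain. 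Hence the statement as written fails for $\Om_\lambda$. No completion of your Step 3, and no other route, can prove it without an additional hypothesis.

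\textbf{Step 3 is a plan rather than a proof, and its heuristic is off.} The hypothesis $\HH^{n-1+\frac{sp-1}{p}}(\partial\Om)=0$ is not a capacity-zero condition. Sets of zero $(s,p)$-capacity have Hausdorff dimension at most $n-sp<n-1$, whereas every Lipschitz or cusp boundary satisfies the hypothesis and has positive $(s,p)$-capacity when $sp>1$. Your proposed Whitney bound $\ell(Q)^{-1}\Ia{Q^*}|Ef-(Ef)_{Q^*}|$ is only $\lesssim\ell(Q)^{s-1}$ times a maximal function of the Gagliardo density. Absorbing the factor $\ell(Q)^{s-1}$ by using values of $f$ in $\Om$ is exactly what measure density does in \cite{HKT-8}, so the argument is circular.

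\textbf{Comparison with the paper.} Your Step 3 construction is essentially the paper's operator $E_2$. The paper does not use measure density or \cite{HKT-8}. Instead it averages $E_1f$ over shrunken cubes $\widetilde Q$ with $\diam(\widetilde Q)=\diam(Q)^{1/s}$, asserts $\|\nabla E_2f\|_{L^p(B(0,1)\setminus\Ombar)}\lesssim\|\nabla f\|_{L^p(\Om)}$ in \Cref{th1}, and uses the Hausdorff hypothesis only on lines, through \Cref{loss-sloan-cor}, one-dimensional Morrey, \Cref{slicing}, \Cref{holder-AC} and \Cref{ACL}. By the example above this route must fail too, and it fails at the same scaling. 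In \Cref{th1}, for $Q\ne Q_0$, $y\in\widetilde Q$ and $z\in\widetilde Q_0$, one has $|y-z|\approx\diam(Q_0)\gg\diam(\widetilde Q_0)$. Consequently neither $\Ia{\widetilde Q_0}|E_1f(y)-E_1f(z)|^p\d z\lesssim\I{\widetilde Q_0}\frac{|E_1f(y)-E_1f(z)|^p}{|y-z|^n}\d z$ nor $|y-z|^s\lesssim\diam(\widetilde Q_0)^s=\diam(Q_0)$ holds. Shrinking the cubes does not remove the factor $\diam(Q)^{s-1}$.
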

	The lower bound of $s$ is natural, as $W^{s,p}(\RR^n)$, for $sp\leq 1$, is known to contain characteristic functions of certain domains. However, it is not clear to us, to what extent the assumption of $\HH^{n-1+\frac{sp-1}{p}}$ measure of the boundary can be relaxed in the hypothesis of \Cref{th-self-improve}.

	We now discuss the paper's organisation. \Cref{preli} contains necessary definitions, notations, conventions and some known results required for the subsequent sections. In \Cref{EVE}, we prove \Cref{th-main} and \Cref{Ponce1}. Examples of domains satisfying BBM inequality but not $(1,p)$-extension property are given in \Cref{sec-examples}. Finally, in \Cref{se}, we prove \Cref{th-self-improve}.

	\vspace{1cm}
	
	\section{Preliminaries}\label{preli}
	Let $\Om \subseteq \RR^n $ be open and $\Om \neq \RR^n$. By a cube, we shall understand a set of the form $c+\lambda [-1,1]^n$. For a cube $Q$, its diameter is denoted by $\diam{Q}$. We denote by $C$ a generic positive constant that may vary from appearance to appearance. Whenever necessary, we shall emphasise the dependencies of $C$ by writing $C(\cdots)$.
	\begin{definition}\label{measure-density-def}
		A set $\Om\subseteq  \RR^n$ is said to satisfy the measure density condition if
		\begin{equation}\label{measure-density}
			|B(x,r)\cap \Om| \geq C(\Om,n) r^n, \quad \text{for all } x\in \Om \ \text{and } r\in(0,\text{diam}(\Om)/2),
		\end{equation}
	\end{definition}
	\begin{definition}
		Let $\Om$ be an open set in $\RR^n$, $1\leq p<\infty$ and $0<s\leq 1$. Given $f \in L^1_{\loc}(\Om)$, the (fractional, when $s<1$) Sobolev seminorm is given by
		\[[f]_{W^{s, p}(\Om)}:=
		\begin{cases}
			\lp \I\Om \abs{\nabla f(x)}^p\d x \rp ^{\frac{1}{p}}\qquad &\text{ when } s=1\\
			\lp \I{\Om}\I{\Om}\frac{|f(x)-f(y)|^p}{|x-y|^{n+sp}} \d y \d x \ \rp ^\frac{1}{p} \qquad &\text{ when } s<1,
		\end{cases}
		\] 
		where $(\int_{\Om}|\nabla f(x)|^pdx)^{\frac{1}{p}}$ is interpreted to be infinite unless the distributional gradient $\nabla f$ is a $p$-integrable (vector valued) function.
		
		We also define the norm, for $0<s\leq 1$,
		\[
		\|f\|_{W^{s, p}(\Om)}:= \lp \|f\|_{L^p(\Om)}^p + [f]_{W^{s, p}(\Om)}^p\rp ^\frac{1}{p}.
		\] 
		We denote, by $\Wdot{s,p}(\Om)$, the collection of all $f\in L^1_{\loc}(\Om)$ such that $[f]_{W^{s,p}(\Om)}$ is finite. The space $\Wdot{s,p}(\Om)$, equipped with $[\cdot]_{W^{s,p}}$, is then called a homogeneous (fractional, when $s<1$) Sobolev space. The (fractional, when $s<1$) Sobolev space $W^{s,p}(\Om)$ is defined to be $\Wdot{s,p}(\Om)\cap L^p(\Om)$, and it is equipped with the norm $\|\cdot\|_{W^{s,p}}(\Om)$.
		
	\end{definition}
	
	\begin{definition}
		Let $s\in(0,1]$ and let $1< p<\infty$ be given. A bounded open set $\Om\subset\RR^n$ is said to be a $(1,s,p)$-extension domain if there exists a bounded extension operator 
		\begin{equation*}
			E:\Wdot{1, p}(\Om)\to \Wdot{s, p}(\RR^n),  
		\end{equation*}   
		that is, $Ef_{|\Om}=f$ and there exist $C>0$ such that 
		\begin{equation*}
			[Ef]_{W^{s,p}(\RR^n)}\leq C [f]_{W^{1,p}(\Om)}.
		\end{equation*}
		We call a $(1,1,p)$-extension domain simply a $(1,p)$-extension domain.
	\end{definition} 
	\begin{definition}
		Let $f\in L^{1}_{\loc}(\RR^n)$. The maximal function $\mathcal{M}f: \RR^n \to [0,\infty]$ is defined by 
		\begin{equation*}
			\mathcal{M}f(x) := \sup_{Q\ni x}\frac{1}{|Q|}\I{Q}|f(y)|\d y,
		\end{equation*}
		where $Q$ denotes a cube.
	\end{definition}
	\begin{remark}
		In the above definition, one could replace the cubes $Q$ by balls $B(y,r)$ and still get the following result.
	\end{remark}
	\begin{theorem}[see \cite{stein}] \label{HLMI}
		Let $1 < p \leq \infty$. Then there exists a constant $C=C(n,p) > 0$, such that
		\begin{equation*}
			\|\mathcal{M}f\|_{L^p(\RR^n)} 
			\leq C \|f\|_{L^p(\RR^n)},
		\end{equation*}
		for all $f \in L^p(\RR^n)$.
	\end{theorem}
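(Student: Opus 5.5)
The plan is the classical one: prove the two endpoint estimates and interpolate. First, the case $p=\infty$ is immediate. For every cube $Q$ we have $\frac{1}{|Q|}\I{Q}|f|\d y\le \|f\|_{L^\infty(\RR^n)}$, so $\|\mathcal{M}f\|_{L^\infty}\le \|f\|_{L^\infty}$ with constant $1$.

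Second, I will prove the weak-type $(1,1)$ bound
\[
|\{x:\mathcal{M}f(x)>\lambda\}|\le \frac{C(n)}{\lambda}\|f\|_{L^1(\RR^n)},\qquad \lambda>0 .
\]
Set $E_\lambda=\{\mathcal{M}f>\lambda\}$. This set is open, because a cube that nearly attains the supremum at $x$ also contains all nearby points once it is slightly enlarged. Fix a compact $K\subseteq E_\lambda$. Each $x\in K$ lies in a cube $Q_x$ with $\I{Q_x}|f|>\lambda|Q_x|$. Finitely many of these cubes cover $K$. By the Vitali covering lemma (greedy selection by decreasing size), I extract pairwise disjoint cubes $Q_1,\dots,Q_m$ such that their concentric dilates $3Q_j$ cover $K$. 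Hence
\[
|K|\le 3^n\sum_j|Q_j|\le \frac{3^n}{\lambda}\sum_j\I{Q_j}|f|\le \frac{3^n}{\lambda}\|f\|_{L^1}.
\]
Inner regularity of Lebesgue measure then gives the same bound for $|E_\lambda|$.

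Third, I interpolate by hand, in the Marcinkiewicz style, for $1<p<\infty$. Split $f=f_1+f_2$ with $f_1=f\chi_{\{|f|>\lambda/2\}}$. Then $\mathcal{M}f\le \mathcal{M}f_1+\lambda/2$ by sublinearity and the $L^\infty$ bound, so $E_\lambda\subseteq\{\mathcal{M}f_1>\lambda/2\}$. The weak $(1,1)$ bound gives
\[
|E_\lambda|\le \frac{2\cdot 3^n}{\lambda}\I{\{|f|>\lambda/2\}}|f|\d y .
\]
Next write $\|\mathcal{M}f\|_p^p=p\int_0^\infty\lambda^{p-1}|E_\lambda|\d\lambda$ and insert this estimate. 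Applying Tonelli, the integral becomes
\[
2\cdot 3^n p\I{\RR^n}|f|\int_0^{2|f|}\lambda^{p-2}\d\lambda\d y=\frac{2^p 3^n p}{p-1}\|f\|_p^p ,
\]
which is finite exactly because $p>1$.

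The only genuinely nontrivial step is the covering argument. There one must take care that $E_\lambda$ is measurable (openness handles this) and that the selection runs over a finite subfamily, which is why I pass to a compact $K$. The remark that balls may replace cubes needs no separate proof: the same covering argument works with balls, and the two maximal functions are pointwise comparable with constants depending only on $n$.
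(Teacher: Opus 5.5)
Your proof is correct and is the classical argument from Stein, which the paper cites rather than reproves: the trivial $L^\infty$ bound, the weak-type $(1,1)$ estimate via a Vitali covering by cubes, then the splitting $f_1=f\chi_{\{|f|>\lambda/2\}}$ and the layer-cake computation. Two small points should be stated explicitly: in the compactness step, choose each $Q_x$ (by your slight-enlargement remark) with $x$ in its interior, so that the cubes form an open cover of $K$; and $f_1\in L^1(\RR^n)$ because $|f_1|\le (\lambda/2)^{1-p}|f|^p$.
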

	\begin{definition}
		Let
		$\lambda \in (0,\infty)$ and $\delta \in (0,\infty]$. Given a set $A \subset \RR^n$, a $\delta$-cover of $A$ is any countable family of sets $\{B_j\}_{j\in \NN}$ so that
		$A \subset \cup_{j\geq 1}B_j$ and $\diam(B_j) \leq  \delta$
		for all $j \in \NN.$ The $\lambda$-dimensional Hausdorff $\delta$-content is
		\begin{equation*}
			\HH^{\lambda}_{\delta}(A):= \inf\{\sum_{j\geq 1}\diam(B_j)^\lambda :A\subset \cup_{j\geq 1}B_j, \diam(B_j)\leq \delta\}.
		\end{equation*}
		
		The $\lambda$-dimensional Hausdorff measure is defined as
		\begin{equation*}
			\HH^\lambda (A) := \lim_{\delta\to 0} \HH^{\lambda}_{\delta}(A).
		\end{equation*}
	\end{definition}
	\begin{definition}
		A function $f: [a,b]\to \RR $ is said to be an absolutely continuous function if for each $\varepsilon >0$ there exists a $\delta>0$ such that
		\begin{equation*}
			\sum_{i=1}^{k}|f(b_i)-f(a_i)|<\varepsilon,
		\end{equation*}
		whenever $(a_i,b_i)$ are non-overlapping subintervals of $[a,b]$ with
		\begin{equation*}
			\sum_{i=1}^{k}|b_i-a_i|<\delta.
		\end{equation*}
	\end{definition}
	\begin{definition}
		Let $p>0$ and let $\Om \subset \RR^n$ be a domain. A measurable function $u$ belongs to the homogeneous Haj\l asz Sobolev space($\Mdot{1,p}(\Om)$) if there exists a function $g\in L^{p}(\Om)$ and a set $E\subset \Om$ of measure zero such that
		for all $x,y\in \Om\setminus E$, we have
		\begin{equation}\label{2}
			|u(x)-u(y)|\leq |x-y| (g(x) + g(y)).
		\end{equation}
		The seminorm in $\Mdot{1,p}(\Om)$ is given by
		\begin{equation*}
			\|u\|_{\Mdot{1,p}(\Om)}:= \inf{\|g\|_{L^{p}(\Om)}},
		\end{equation*}
		where the infimum is taken over all nonnegative functions $g$ satisfying \cref{2}.
	\end{definition}
	\begin{definition}[Poincar\'e inequality]
		Let $1\leq p<\infty$, $\Om\subset\RR^n$ be a bounded open set. The inequality 
		\begin{equation*}
			\I\Om|f(x)-f_\Om|^p\d x \leq C(n,p,\Om)\I\Om|\nabla f(x)|^p\d x 
		\end{equation*}
		is referred to as a global $(p, p)$-Poincar\'e inequality. Here we used the notation $f_\Om= \frac{1}{|\Om|}\I\Om f(y)\d y$.
	\end{definition}

	\begin{definition}[Poincar\'e domain]
		A bounded domain $\Om\subset\RR^n$ is called a $p$-Poincar\'e domain for $1\leq p <\infty$, if satisfies the global $(p,p)$-Poincar\'e inequality for every $f \in W^{1, p}(\Om)$.
	\end{definition}
	
	The following Lemmas follow from results in \cite{stein}.
	\begin{lemma}\label{lem:whitney}
		Let $D\subseteq \RR^n$ be an open subset. Then there exists a countable collection $\mathcal{W}(D)$ of closed cubes such that
		\begin{enumerate}
			\item $\bigcup_{Q\in \mathcal{W}(D)}Q= D$,
			\item Any two different cubes in $\mathcal{W}(D)$ have disjoint interiors,
			\item Let $Q\in \mathcal{W}(D)$ and $x\in 5Q$. Then $2\diam{Q}< \dist(x,\RR^n\setminus D)< 8\diam{Q}$,
			\item For each $Q\in \mathcal{W}(D)$, there is $x_Q^{\star}\in \RR^n\setminus D$
			such that $\dist(x_Q^{\star}, Q)<15\diam{Q}$,
			\item There is $M \in \NN$ such that $\sum_{Q\in \mathcal{W}(D)}\chi_{15 Q}(x)\leq M$ for all $x\in D$
			\item Denote, for $x\in D$, $\mathcal{W}_x:=\{Q\in \mathcal{W}(D)\ |\ x\in Q\}$. There is some $C>0$ such that $Q\subseteq C R$ whenever $Q,R\in \mathcal{W}_x$ for some $x\in D$.   
		\end{enumerate}
	\end{lemma}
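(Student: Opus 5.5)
The plan is to construct $\mathcal{W}(D)$ by a stopping-time selection of dyadic cubes, as in Stein's proof, with the selection threshold tuned to the constants $2$, $8$, $15$ in \Cref{lem:whitney}. Write $F:=\RR^n\setminus D$, which is closed and nonempty since $D\neq\RR^n$. For $x\in D$ put $d(x):=\dist(x,F)$. This function is $1$-Lipschitz, which is the only analytic fact used.

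\textbf{Step 1 (selection).} Let $\mathcal{D}$ be the closed dyadic cubes. Fix a parameter $c>0$, to be chosen in Step 4. Call $Q\in\mathcal{D}$ admissible if $c\,\diam Q\le \dist(Q,F)$. Let $\mathcal{W}(D)$ consist of the maximal admissible cubes; maximal ones exist because admissibility passes to dyadic subcubes and sizes are bounded along each chain. Every $x\in D$ lies in arbitrarily small dyadic cubes, and these are admissible once $\diam Q\le d(x)/(c+1)$. Hence the union of $\mathcal{W}(D)$ is $D$, and maximality in a nested family gives disjoint interiors. This yields (1) and (2).

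\textbf{Step 2 (two-sided comparison).} The dyadic parent $P$ of a selected $Q$ is not admissible, and $\diam P=2\diam Q$. Since $Q\subset P$, we get $\dist(Q,F)\le \dist(P,F)+\diam P<2c\,\diam Q+2\diam Q$. Together with admissibility this gives
\begin{equation*}
c\,\diam Q\le \dist(Q,F)<(2c+2)\diam Q .
\end{equation*}

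\textbf{Step 3 (transfer to points of $5Q$, i.e. property (3)).} For $x\in 5Q$ the Lipschitz bound gives $d(x)\ge \dist(Q,F)-\dist(x,Q)$. Writing $Q=c_Q+\lambda[-1,1]^n$, one has $\dist(x,Q)\le 4\lambda\sqrt n=2\diam Q$, so $d(x)\ge(c-2)\diam Q$. For the upper bound, take $y\in Q$ with $d(y)=\dist(Q,F)$. Then $d(x)\le d(y)+|x-y|$, and $|x-y|\le 6\lambda\sqrt n=3\diam Q$. This gives $d(x)<(2c+5)\diam Q$.

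\textbf{Step 4 (choice of $c$; the expected obstacle).} The lower bound in (3) needs $c>4$, while the upper bound $8\diam Q$ needs $2c+5\le 8$. With a single dyadic stopping rule the factor-$2$ gap from Step 2 makes these incompatible. So this is where the proof requires care, and I expect it to be the main obstacle.

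My first attempt will be to keep the two-sided window but shrink it to the interval $(4,5)$ for $\dist(Q,F)/\diam Q$, which is exactly what Step 3 needs. I would do this by re-subdividing each selected cube $Q$ into $k_Q^n$ congruent subcubes, choosing the integer $k_Q$ according to the ratio $\dist(Q,F)/\diam Q$ and iterating locally where the ratio drifts inside $Q$. This preserves (1) and (2).

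If the window $(4,5)$ cannot be reached uniformly, I would fall back on Stein's standard constants. I would then record that only the qualitative form of (3), namely $d(x)\simeq\diam Q$ on $5Q$ with absolute constants, is used later in the paper.

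\textbf{Step 5 (properties (4)--(6)).} These follow once (3) holds with comparable constants.

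For (4), pick $x^\star_Q\in F$ realising $\dist(Q,F)$. Then $\dist(x_Q^\star,Q)<8\diam Q\le 15\diam Q$.

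For (6), if $x\in Q\cap R$, then (3) applied at $x$ for both cubes gives $\diam Q\simeq d(x)\simeq\diam R$. Since $Q$ and $R$ share a point and have comparable diameters, $Q\subseteq CR$.

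For (5), suppose $x\in 15Q$. Then $d(x)$ differs from $\dist(Q,F)$ by at most a fixed multiple of $\diam Q$, so $\diam Q\simeq d(x)$. All such $Q$ therefore have comparable sizes and lie in a ball $B(x,C\,d(x))$. Since their interiors are disjoint, a volume count bounds their number by some $M=M(n)$.
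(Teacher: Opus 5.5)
\textbf{There is a genuine gap, and in fact the lemma cannot be proved as stated.} Your Steps 1--3 are Stein's stopping-time construction, and your diagnosis in Step 4 is correct. The paper gives no argument beyond citing Stein, whose cubes satisfy $\diam Q\le\dist(Q,F)\le 4\diam Q$; that does not give (3) or (5) with these constants.

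Step 4 does not close the gap. Refinement only shrinks cubes. Across a selected cube, $d$ can vary by almost $\diam Q$ (e.g.\ along the diagonal at a corner of $F$). So a uniform $k^n$ refinement spreads the ratios $\dist(Q',F)/\diam Q'$ over an interval of length $k-1$, which for $k\ge 2$ already fills your whole target window $(4,5)$.

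The more serious error is your derivation of (5) in Step 5. For $x\in 15Q$, $\dist(x,Q)$ can be $7\diam Q$, and nothing in (3) keeps $15Q$ away from $F$. You therefore get $\diam Q\gtrsim d(x)$ but not $\diam Q\lesssim d(x)$. Without an upper bound on the sizes, the volume count bounds nothing.

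This cannot be repaired, because (3) and (5) are incompatible. Take $n=1$ and $D=(0,\infty)$, a component of $\RR\setminus[-1,0]$; this is exactly the paper's situation $\RR^n\setminus\Ombar$. Then $d(x)=x$; write $Q=[a-\lambda,a+\lambda]$, so $\diam Q=2\lambda$.
\begin{itemize}
\item Applying (3) at the endpoints $a\pm 5\lambda$ of $5Q$ gives $9\lambda<a<11\lambda$, hence $15Q\supseteq(0,24\lambda]$.
\item Applying (3) at any $y\in Q$ gives $\lambda>y/16$.
\item So for every $x>0$, each of the infinitely many cubes meeting $[x,\infty)$ has $x\in 15Q$, i.e.\ $\sum_Q\chi_{15Q}(x)=\infty$.
\end{itemize}
The same happens in every dimension for $D=\RR^n\setminus[-1,0]^n$. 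Apply (3) at a point $t(1,\dots,1)\in Q$ and at the far corner of $5Q$, and use Cauchy--Schwarz. This shows that every cube meeting the ray $\{t(1,\dots,1): t\ge\epsilon\}$ has all centre coordinates in $(3\lambda,13\lambda)$ and $\lambda>\epsilon/12$, hence $\epsilon(1,\dots,1)\in 15Q$. Your fallback to Stein's constants fails (5) at such a corner for the same reason.

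What your Steps 1--3 and 5 do prove is a corrected lemma, in which the separation exceeds the reach of the dilate. Take $c=8$ in Step 1. Then:
\begin{itemize}
\item $6\diam Q\le d(x)<21\diam Q$ on $5Q$;
\item $\diam Q\le d(x)<26\diam Q$ on $15Q$;
\item your Step 5 gives (4) with $18$ in place of $15$, together with (5) and (6).
\end{itemize}
Your remark that only $d\simeq \diam Q$ matters later is reasonable. But then you must state and prove this adjusted version, not the lemma as written.
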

	We refer to the above collection $\mathcal{W}(D)$ as a Whitney covering of $D$. With such a Whitney covering of $D$, we can associate a Lipschitz partition of unity.
	\begin{lemma}\label{POU}
		Given a Whitney covering  $\mathcal{W}(D)$ of $D$, there exists a partition of unity $\{\varphi_Q\}_{Q \in \mathcal{W}(D)}$ such that
		\begin{enumerate}
			\item $\supp\varphi_Q \subseteq 2Q$,
			\item $\varphi_Q(x) \leq M^{-1}$ for all $x\in Q$,
			\item there is a constant $K>0$ such that each $\varphi_Q$ is $\frac{K}{\diam(Q)}$-Lipschitz,
			\item $\sum_{Q\in \mathcal{W}(D)}\varphi_Q(x)= \chi_{D}(x)$.
		\end{enumerate}
	\end{lemma}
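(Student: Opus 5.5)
The plan is to use the standard construction from \cite{stein} and check each item from \Cref{lem:whitney}. Fix a Lipschitz bump $\psi:\RR^n\to[0,1]$ with $\psi\equiv 1$ on $[-1,1]^n$ and $\supp\psi\subseteq[-2,2]^n$. For a Whitney cube $Q=c_Q+\lambda_Q[-1,1]^n$, set $\psi_Q(x):=\psi((x-c_Q)/\lambda_Q)$. Then $\psi_Q\equiv1$ on $Q$, $\supp\psi_Q\subseteq 2Q$, and $\psi_Q$ is $C/\diam(Q)$-Lipschitz. Define
\[
\Psi:=\sum_{R\in\mathcal W(D)}\psi_R,\qquad \varphi_Q:=\frac{\psi_Q}{\Psi}\ \text{ on } D,\qquad \varphi_Q:=0 \text{ off } D .
\]

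\textbf{Items (1), (3), (4).} Item (1) holds by construction. For $x\in D$ we have $1\le\Psi(x)\le M$: the lower bound because $x$ lies in some cube of the cover (property (1) of \Cref{lem:whitney}), the upper bound because $2R\subseteq15R$ and property (5) applies. Hence (4) holds, with $\sum_Q\varphi_Q=\chi_D$. For (3), on $2Q$ only cubes $R$ with $2R\cap2Q\neq\emptyset$ contribute to $\Psi$. Property (3) of \Cref{lem:whitney} shows that such $R$ have $\diam R\simeq\diam Q$, and there are at most $M$ of them. So $\Psi$ is $C/\diam(Q)$-Lipschitz on $2Q$, and the quotient rule together with $\Psi\ge1$ gives the Lipschitz bound $K/\diam(Q)$. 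Outside $2Q$ we have $\varphi_Q=0$, so it only remains to handle pairs $x\in 2Q$, $y\notin 2Q$. For these we compare through a point of $\partial(2Q)$ on the segment from $x$ to $y$, which lies in $D$ since $5Q\subseteq D$.

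\textbf{Item (2), and the main obstacle.} This construction gives $\varphi_Q=1/\Psi\ge M^{-1}$ on $Q$, i.e. the reverse inequality. The upper bound $\varphi_Q\le M^{-1}$ on $Q$, as worded, is the step I expect to fail, and I would first test whether any construction can give it. The argument is as follows. At a point $x$ in the interior of $Q$, at most $N(x):=\#\{R: x\in 2R\}\le M$ cubes can have $\varphi_R(x)\neq0$. Since these values sum to $1$, the bound $\varphi_R(x)\le M^{-1}$ would force $N(x)=M$ and $\varphi_R(x)=M^{-1}$ for every such $R$.

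Now take $x$ to be the centre of a cube $Q$ whose neighbours are of comparable size. A neighbour $R$ satisfies $2R\subseteq R+\tfrac{\lambda_R}{1}[-1,1]^n$, so $2R$ extends only $\lambda_R$ beyond $R$. For $x$ near the centre of $Q$ one can therefore arrange that $x$ lies in $2Q$ alone (for example, when $\lambda_R<\lambda_Q$ for all neighbours). Then $N(x)=1$, so $\varphi_Q(x)=1$. This contradicts $\varphi_Q\le M^{-1}$ whenever $M\ge2$, and $M\ge 2$ is forced because the enlarged cubes $15Q$ overlap.

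Consequently I would not prove (2) literally. I would prove the bound $M^{-1}\le\varphi_Q\le1$ on $Q$, which is what the construction yields, and flag that the inequality in (2) appears to be reversed. Before relying on item (2) later, I would check that the subsequent arguments only use the lower bound $\varphi_Q\ge M^{-1}$ (or merely $\varphi_Q\le 1$).
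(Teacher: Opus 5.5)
Your construction $\varphi_Q=\psi_Q/\sum_R\psi_R$ is the standard one from \cite{stein}, which the paper invokes without giving a proof, and your verification of (1), (3) and (4) is sound. You are also right that (2) is reversed: for bounded $D$, at the centre of a Whitney cube $Q$ of maximal size every other $\varphi_R$ vanishes (by (1), disjointness of interiors, and continuity), so $\varphi_Q=1$ there, whereas the construction gives $\varphi_Q\ge M^{-1}$ on $Q$; moreover, none of the paper's later estimates use (2) in either direction, only $0\le\varphi_Q\le 1$, (3) and (4).
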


	To state the result, we set some notations. For $w \in \mathbb{S}^{n-1}$, we denote 
	$$
	T_w\mathbb{S}^{n-1}:= \{x\in \RR^n\ |\ x\cdot w=0 \},
	$$ -the collection of normals to $\mathbb{S}^{n-1}$ at $w$. For fixed $w\in \mathbb{S}^{n-1}$ and $x\in T_w\mathbb{S}^{n-1}$, consider the line
	$$
	L_{w,x}:=\lc  (x+sw)\in \Om\ |\ s\in \RR  \rc .
	$$
	
	\begin{lemma}[{\cite[Lemma~2.4]{LoSl}}]
		Let $s\in (0,1)$, $p\in(0,\infty)$, $\Om$ be a open set in $\RR^n$, and $f\in W^{s,p}(\Om)$. Then
		\begin{multline}
			\int_{\Om}\int_{\Om} \frac{|f(x)-f(y)|^p}{|x-y|^{n+sp}} \d y  \d x \\
			= \frac{1}{2} \I{w\in \mathbb{S}^{n-1}} \I{x\in T_w\mathbb{S}^{n-1}} \I{z\in L_{w,x}} \I{y\in L_{w,x}}
			\frac{|f(y)-f(z)|^p}{|y-z|^{1+sp}} \d\HH^{1}(y) \d\HH^{1}(z)  \d\HH^{n-1}(x) \d\HH^{n-1}(w).
		\end{multline}
		That is
		\begin{equation*}
			[f]_{W^{s,p}(\Om)}^p
			= \frac{1}{2} \I{w\in \mathbb{S}^{n-1}} \I{x\in T_w\mathbb{S}^{n-1}} [f]_{W^{s,p}(L_{w,x})}^p \d\HH^{n-1}(x) \d\HH^{n-1}(w).
		\end{equation*}
	\end{lemma}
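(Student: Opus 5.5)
The identity is a change of variables followed by Tonelli's theorem. The integrand is nonnegative and measurable, so every rearrangement of the order of integration is justified without using finiteness of $[f]_{W^{s,p}(\Om)}$; the assumption $f\in W^{s,p}(\Om)$ only guarantees that both sides are finite. I will first extend $f$ by zero outside $\Om$ (any fixed value works) and carry the indicator $\chi_\Om(y)\chi_\Om(z)$ explicitly. This is exactly what restricting to $L_{w,x}$, which by definition consists only of points of $\Om$, encodes.

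\textbf{Step 1 (polar coordinates in the inner variable).} Fix $z\in\Om$ and write $y=z+rw$ with $w\in\mathbb{S}^{n-1}$, $r>0$, so that $\d y=r^{n-1}\d r\d\HH^{n-1}(w)$ and $|y-z|^{n+sp}=r^{n+sp}$. The factor $r^{n-1}$ cancels all but one power of $r$ in the denominator:
\[
\I{\Om}\frac{|f(z)-f(y)|^p}{|z-y|^{n+sp}}\d y=\I{\mathbb{S}^{n-1}}\int_0^\infty \chi_\Om(z+rw)\frac{|f(z)-f(z+rw)|^p}{r^{1+sp}}\d r\d\HH^{n-1}(w).
\]
Replacing $(w,r)$ by $(-w,-r)$ shows that the same quantity equals the integral over $r<0$. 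Hence it equals $\frac12\int_{\mathbb{S}^{n-1}}\int_{\RR}\chi_\Om(z+rw)\,|f(z)-f(z+rw)|^p\,|r|^{-1-sp}\d r\d\HH^{n-1}(w)$. This is where the factor $\tfrac12$ comes from.

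\textbf{Step 2 (Fubini along the direction $w$).} Integrate in $z\in\Om$ and use Tonelli to move the $w$-integral outside. For fixed $w$, decompose $z=x+tw$ with $x\in T_w\mathbb{S}^{n-1}$ and $t\in\RR$. This orthogonal splitting gives $\d z=\d\HH^{n-1}(x)\d t$. For fixed $w$ and $x$, both points $z=x+tw$ and $y=x+(t+r)w$ lie on the line through $x$ in direction $w$, and the conditions $z,y\in\Om$ say exactly that $z,y\in L_{w,x}$. Parametrising this line by arclength, $\HH^1$ on $L_{w,x}$ corresponds to $\d t$, resp. $\d(t+r)$, and $|r|=|y-z|$. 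Therefore the inner double integral in $(t,r)$ becomes $\int_{L_{w,x}}\int_{L_{w,x}}|f(y)-f(z)|^p|y-z|^{-1-sp}\d\HH^1(y)\d\HH^1(z)$, which is $[f]^p_{W^{s,p}(L_{w,x})}$ (one-dimensional Gagliardo seminorm on the slice). Collecting the factor $\tfrac12$ gives the stated formula, and the second displayed form is just a rewriting of it.

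\textbf{Expected obstacle.} The only delicate points are bookkeeping ones, not analytic ones. First, the factor $\tfrac12$ must be tracked correctly: it arises because each unoriented line is counted twice, once through $w$ and once through $-w$. Second, one must check joint measurability of $(w,x,y,z)\mapsto \chi_\Om(y)\chi_\Om(z)\,|f(y)-f(z)|^p|y-z|^{-1-sp}$ on the bundle $\{(w,x):x\perp w\}$. This follows because the map $(w,x,t,r)\mapsto (x+tw,\,x+(t+r)w)$ is smooth. No property of $\Om$ beyond openness is used.
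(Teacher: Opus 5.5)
Your argument (polar coordinates about $z$, the antipodal symmetry $(w,r)\mapsto(-w,-r)$ to produce the factor $\tfrac12$, then Fubini along the orthogonal splitting $z=x+tw$ with $x\in T_w\mathbb{S}^{n-1}$) is correct, and it is the standard proof of \cite[Lemma~2.4]{LoSl}, which the paper quotes without proof. The one point to tighten is your measurability remark: smoothness of $(w,x,t,r)\mapsto(x+tw,\,x+(t+r)w)$ alone does not make its composition with a merely Lebesgue-measurable $f$ measurable, so either fix a Borel representative of $f$, or note that this map is a diffeomorphism of each of $\{r>0\}$ and $\{r<0\}$ onto $\{(z,y): z\neq y\}$ and hence pulls null sets back to null sets.
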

	\begin{corollary}\label{loss-sloan}
		Let $s\in (0,1)$, $p\in(0,\infty)$, $\Om$ be a open set in $\RR^n$, and $f\in W^{s,p}(\Om)$. Then, for $\HH^{n-1}$-almost every hyperplane $H$, for $\HH^{n-1}$-almost every line $L$ perpendicular to $H$, $f\in W^{s,p}(L\cap\Om)$.
	\end{corollary}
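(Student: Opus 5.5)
The plan is to read the corollary off the preceding lemma of \cite{LoSl} by a Tonelli-type argument, and to treat the $L^p$ part of the norm separately with Fubini. Throughout, the hyperplanes in question are $T_w\mathbb{S}^{n-1}$, parametrised by $w\in\mathbb{S}^{n-1}$, and the lines perpendicular to $T_w\mathbb{S}^{n-1}$ are exactly the $L_{w,x}$ with $x\in T_w\mathbb{S}^{n-1}$. So the statement to prove is: for $\HH^{n-1}$-a.e.\ $w$, for $\HH^{n-1}$-a.e.\ $x\in T_w\mathbb{S}^{n-1}$, we have $f|_{L_{w,x}}\in W^{s,p}(L_{w,x})$.

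\textbf{Step 1: restrictions are well defined and measurable.} Fix $w$ and choose a rotation $R_w$ taking $e_n$ to $w$. In the coordinates $(x,t)\mapsto x+tw$, with $x\in T_w\mathbb{S}^{n-1}\cong\RR^{n-1}$, Lebesgue measure on $\RR^n$ is the product $\HH^{n-1}\otimes\HH^1$.
\begin{itemize}
\item Fubini then shows that a Lebesgue null set meets $\HH^{n-1}$-a.e.\ line $L_{w,x}$ in an $\HH^1$-null set. Hence the restriction $f|_{L_{w,x}}$ is a well-defined element of $L^0(L_{w,x})$ for a.e.\ $x$, independently of the representative of $f$.
\item Applying Fubini to $|f|^p\chi_\Om\in L^1(\RR^n)$ gives
\[
\int_{T_w\mathbb{S}^{n-1}}\int_{L_{w,x}}|f|^p\d\HH^1\d\HH^{n-1}(x)=\|f\|_{L^p(\Om)}^p<\infty .
\]
So $f|_{L_{w,x}}\in L^p(L_{w,x})$ for $\HH^{n-1}$-a.e.\ $x$, and this holds for every $w$.
\end{itemize}

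\textbf{Step 2: the seminorm part.} The integrand $\frac{|f(y)-f(z)|^p}{|y-z|^{1+sp}}$ is nonnegative. After the change of variables $(w,x,t,\tau)\mapsto(w,\,x+tw,\,x+\tau w)$ it is jointly measurable in all four variables. Tonelli therefore applies: the map $(w,x)\mapsto[f]_{W^{s,p}(L_{w,x})}^p$ is measurable with values in $[0,\infty]$, and the lemma of \cite{LoSl} gives
\[
\int_{\mathbb{S}^{n-1}}\int_{T_w\mathbb{S}^{n-1}}[f]^p_{W^{s,p}(L_{w,x})}\d\HH^{n-1}(x)\d\HH^{n-1}(w)=2[f]_{W^{s,p}(\Om)}^p<\infty .
\]

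\textbf{Step 3: conclusion.} Since the outer integral is finite, the inner integral $\int_{T_w\mathbb{S}^{n-1}}[f]^p_{W^{s,p}(L_{w,x})}\d\HH^{n-1}(x)$ is finite for $\HH^{n-1}$-a.e.\ $w$. For each such $w$, the integrand $[f]^p_{W^{s,p}(L_{w,x})}$ is finite for $\HH^{n-1}$-a.e.\ $x\in T_w\mathbb{S}^{n-1}$. Intersecting this full-measure set of $x$ with the one from Step 1 gives $f\in W^{s,p}(L_{w,x}\cap\Om)$ for $\HH^{n-1}$-a.e.\ line perpendicular to $H=T_w\mathbb{S}^{n-1}$, for a.e.\ $w$.

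\textbf{Main obstacle.} The only delicate point is the measurability bookkeeping: the family of pairs $(w,x)$ is a bundle over the sphere rather than a product. I would handle it by pulling everything back through the rotations $R_w$ to the fixed product $\mathbb{S}^{n-1}\times\RR^{n-1}\times\RR\times\RR$. Choosing $R_w$ measurably in $w$ is possible, for instance locally smoothly off a null set of $w$. Once everything lives on this fixed product, Tonelli applies verbatim.
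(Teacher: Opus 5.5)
Your proposal is correct, and it is the argument the paper leaves implicit when it calls this a corollary of the preceding slicing lemma: a finite nonnegative iterated integral forces the slice seminorms to be finite for a.e.\ $w$ and then for a.e.\ $x\in T_w\mathbb{S}^{n-1}$, Fubini supplies the $L^p$ part, and your measurability bookkeeping via the rotations $R_w$ is more care than the paper takes and is fine. One small addition is needed: the paper's definition of $\Wdot{s,p}$ requires $L^1_{\loc}$, and the statement allows $p<1$, where this does not follow from $L^p$, so you should also apply Fubini to $|f|\chi_{K_j}$ for a compact exhaustion $(K_j)$ of $\Om$ to get $f|_{L\cap\Om}\in L^1_{\loc}(L\cap\Om)$ for a.e.\ line.
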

	
	An corollary of \Cref{loss-sloan} follows from the fact that, given any $N\subseteq \mathbb{S}^{n-1}$, we can find an orthonormal set $\{x_1,\cdots,x_n\} \subseteq \mathbb{S}^{n-1}$ such that $\{x_1,\cdots,x_n\} \cap N=\emptyset$. So, we have
	
	\begin{corollary}\label{loss-sloan-cor}
		Let $s\in (0,1)$, $p\in(0,\infty)$, $\Om$ be a open set in $\RR^n$, and $f\in W^{s,p}(\Om)$. Then, there exists a Cartesian coordinate system such that, for $\HH^{n-1}$-almost every line $L$ parallel to the axes, $f\in W^{s,p}(L\cap\Om)$.
	\end{corollary}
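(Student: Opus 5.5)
The plan is to pass from ``almost every direction'' in the preceding Lemma of \cite[Lemma~2.4]{LoSl} to ``some orthonormal frame of good directions''. The frame is produced by a measure-theoretic genericity argument on the orthogonal group.

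\textbf{Step 1: good directions.} Since $f\in W^{s,p}(\Om)$, the left-hand side of the identity in that Lemma is finite. All integrands are nonnegative, so Tonelli's theorem applies. It shows that for $\HH^{n-1}$-a.e.\ $w\in\mathbb{S}^{n-1}$ the quantity
\[
\I{x\in T_w\mathbb{S}^{n-1}}[f]_{W^{s,p}(L_{w,x})}^p\d\HH^{n-1}(x)
\]
is finite. For each such $w$, it follows that $[f]_{W^{s,p}(L_{w,x})}<\infty$ for $\HH^{n-1}$-a.e.\ $x\in T_w\mathbb{S}^{n-1}$.

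The $L^p$ part of the norm is handled by Fubini in the same way. For every fixed $w$, $\Om$ is parametrised by $(x,t)\mapsto x+tw$ with $x\in T_w\mathbb{S}^{n-1}$ and $t\in\RR$. Hence $\int_{T_w\mathbb{S}^{n-1}}\|f\|^p_{L^p(L_{w,x})}\d\HH^{n-1}(x)=\|f\|^p_{L^p(\Om)}<\infty$. So $f|_{L_{w,x}}\in L^p$ for a.e.\ $x$, for every $w$. Let $N\subseteq\mathbb{S}^{n-1}$ be the $\HH^{n-1}$-null set of directions for which the Gagliardo conclusion fails. Since $L_{-w,x}=L_{w,x}$, we may replace $N$ by $N\cup(-N)$, which is still null.

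\textbf{Step 2: choosing the frame.} This is the only step requiring an idea, namely the fact quoted before the statement. I would prove it as follows. Equip $O(n)$ with normalised Haar measure $\mu$. For each $i$, the map $R\mapsto Re_i$ pushes $\mu$ forward to a rotation-invariant probability measure on $\mathbb{S}^{n-1}$. By uniqueness, this is normalised surface measure, a constant multiple of $\HH^{n-1}\lfloor\mathbb{S}^{n-1}$. Therefore $\mu\{R: Re_i\in N\}=0$ for each $i$. The union of these $n$ sets is $\mu$-null, so some $R\in O(n)$ satisfies $Re_i\notin N$ for all $i=1,\dots,n$. Put $x_i:=Re_i$. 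This is an orthonormal basis avoiding $N$.

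\textbf{Step 3: conclusion.} Take the Cartesian coordinate system with axes $x_1,\dots,x_n$. Any line $L$ parallel to the $i$-th axis is of the form $\{x+tx_i\}$ with a unique $x\in T_{x_i}\mathbb{S}^{n-1}$, and $L\cap\Om=L_{x_i,x}$. The $(n-1)$-dimensional Hausdorff measure on the family of such lines is exactly $\HH^{n-1}$ on $T_{x_i}\mathbb{S}^{n-1}$.

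By Step 1, since $x_i\notin N$, for $\HH^{n-1}$-a.e.\ such $L$ we have both $[f]_{W^{s,p}(L\cap\Om)}<\infty$ and $f\in L^p(L\cap\Om)$, that is, $f\in W^{s,p}(L\cap\Om)$. A finite union over $i$ of null sets of lines is null, which gives the claim.

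I expect no real obstacle. The only point to state carefully is the identification of the pushforward of Haar measure with $\HH^{n-1}$ on the sphere, up to a constant, in Step 2.
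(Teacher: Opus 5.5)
Your proposal is correct and follows the paper's argument. The paper also derives the corollary from the Loss--Sloan slicing identity (for $\HH^{n-1}$-a.e.\ direction, $\HH^{n-1}$-a.e.\ line is good, as in \Cref{loss-sloan}), combined with the fact that one can choose an orthonormal frame $\{x_1,\dots,x_n\}$ avoiding the null set $N$ of bad directions. The differences are that you actually prove that fact, via the pushforward of Haar measure on $O(n)$ under $R\mapsto Re_i$, and you handle the $L^p$ part of the norm explicitly by Fubini, whereas the paper simply asserts both.
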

	
	Next, we state the ACL characterisation of Sobolev spaces.
	
	\begin{lemma}[ACL characterisation]\label{ACL}
		Let $\Om\subset \RR^n$ be open and let $f\in L^1_{\loc}(\Om)$.
		Then the following are equivalent:
		\begin{enumerate}
			\item $f\in W^{1,1}_{\loc}(\Om)$;
			\item There exists $g\in L^1_{\loc}(\Om)$ such that for each $i=1,\dots,n$,
			for almost every line $\ell$ parallel to the $x_i$-axis, the restriction
			$f|_{\ell\cap\Om}$ is absolutely continuous and
			\[
			\frac{d}{dt} f(x+te_i)= g_i(x+te_i)
			\quad \text{for a.e. } t,
			\]
			where $g=(g_1,\dots,g_n)$.
		\end{enumerate}
		In this case, $g_i=\partial_i f$ in the weak sense.
	\end{lemma}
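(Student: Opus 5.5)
This is the ACL characterisation, a classical result, so I will follow the standard argument: mollification for (1)$\Rightarrow$(2), and Fubini with one-dimensional integration by parts for (2)$\Rightarrow$(1). Everything is local, so I first fix a cube $Q\Subset\Om$ with sides parallel to the axes and work on $Q$; a countable cover of $\Om$ by such cubes then gives the statement for almost every line in $\Om$.

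\textbf{(2)$\Rightarrow$(1).} I will show that $g_i=\partial_i f$ in the weak sense. Take $\phi\in C_c^\infty(Q)$ and write $x=(x',t)$, with $t$ the $i$-th coordinate and $x'$ the remaining ones. For almost every $x'$ the map $t\mapsto f(x',t)$ is absolutely continuous on the segment of the line inside $Q$, with derivative $g_i(x',t)$ almost everywhere. Integrating by parts in one variable gives
\[
\int f\,\partial_t\phi\,dt=-\int g_i\,\phi\,dt ,
\]
since $\phi$ vanishes at the endpoints of the segment. Integrating in $x'$ and using Fubini, which is justified because $f,g_i\in L^1(Q)$, yields $\int_Q f\,\partial_i\phi=-\int_Q g_i\phi$. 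Hence $f\in W^{1,1}_{\loc}(\Om)$ and $\partial_i f=g_i$, which also proves the final sentence of the lemma.

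\textbf{(1)$\Rightarrow$(2).} Let $f_\varepsilon=f*\rho_\varepsilon$ be the standard mollifications. On $Q$ we have $f_\varepsilon\to f$ and $\nabla f_\varepsilon\to\nabla f$ in $L^1(Q)$. I will pick a sequence $\varepsilon_k\to 0$ so fast that
\[
\sum_k \|f_{\varepsilon_k}-f\|_{W^{1,1}(Q)}<\infty .
\]
By Fubini, for almost every $x'$ the one-dimensional series
\[
\sum_k\int\bigl(|f_{\varepsilon_k}-f|+|\nabla f_{\varepsilon_k}-\nabla f|\bigr)(x',t)\,dt
\]
converges. On such a line the smooth functions $f_{\varepsilon_k}(x',\cdot)$ are Cauchy in $W^{1,1}$ of the segment. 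Since $W^{1,1}$ of an interval embeds in $C$ of the interval, they converge uniformly to an absolutely continuous function $\tilde f(x',\cdot)$, and its derivative is the $L^1$-limit $\partial_i f(x',\cdot)$.

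The main obstacle is to identify $\tilde f$ with the given $f$ rather than with a representative. The function $\tilde f$ agrees with $f$ almost everywhere on almost every line, hence almost everywhere in $Q$. The statement should be read, as is standard, up to modifying $f$ on a null set. Concretely, I take as representative the limit $\lim_k f_{\varepsilon_k}(x)$ wherever it exists; this limit exists on almost every line and is independent of the direction $e_i$. This gives (2) with $g=\nabla f\in L^1_{\loc}$, for every $i$ simultaneously.
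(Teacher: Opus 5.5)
The paper gives no proof of this lemma. It is quoted as the classical ACL characterisation, without even a reference, so there is no argument of the authors to compare against. Your proof is the standard textbook one and is correct: one-dimensional integration by parts plus Fubini for (2)$\Rightarrow$(1), and $W^{1,1}$-convergence of mollifiers restricted to almost every line for (1)$\Rightarrow$(2).

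Your interpretive remarks are needed, because the lemma as printed is false without them.
\begin{enumerate}
\item (1)$\Rightarrow$(2) holds only for a suitable representative. Adding $1$ to a smooth $f$ on the null set $\{x_1=0\}\cap\Om$ destroys continuity on every line parallel to $e_1$ that crosses it.
\item ``Absolutely continuous on $\ell\cap\Om$'' must mean absolutely continuous on compact subintervals. Think of $f(x)=1/x_1$ on $(0,1)^n$. This local reading is exactly what your cube-by-cube argument delivers.
\end{enumerate}

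One detail to tidy. As written, your sequence $\varepsilon_k$ is chosen for a fixed cube $Q$, so the representative $\lim_k f_{\varepsilon_k}(x)$ is a priori defined only cube by cube. Such representatives still have to be reconciled on overlaps. Instead, choose a single sequence once and for all: take a compact exhaustion $K_k\uparrow\Om$ with $K_k\subset \mathrm{int}\,K_{k+1}$, and require $\varepsilon_k<\dist(K_k,\RR^n\setminus\Om)$ and $\|f_{\varepsilon_k}-f\|_{W^{1,1}(K_k)}<2^{-k}$. Then the summability holds on every cube $Q\Subset\Om$ simultaneously, and one pointwise limit serves all cubes of a countable family whose interiors cover $\Om$. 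The lines that are good for every cube in that family form a set of full measure. On each such line the limit is therefore locally absolutely continuous on all of $\ell\cap\Om$, with derivative $\partial_i f$.

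It may also help to note that the paper only uses the direction (2)$\Rightarrow$(1), in the proof of the self-improvement theorem. That is the half of your argument in which no choice of representative arises.
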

	
	We also recall the well known result from geometric measure theory.
	\begin{lemma}[\protect{Frostman Lemma, see \cite{Mattila}}]\label{frostman}
		Let $A$ be a Borel subset of $\RR^n$, and let $t > 0$. Then the following are equivalent:
		
		\begin{enumerate}
			\item $\HH^t(A) > 0$.
			\item There is an unsigned Borel measure $\mu$ on $\RR^n$ satisfying $\mu(A) > 0$, such that
			$$
			\mu (B(x,r)) \leq r^t
			$$
			holds for all $x \in \RR^n$ and $r>0$.
		\end{enumerate}
	\end{lemma}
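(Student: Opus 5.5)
The implication (2)$\Rightarrow$(1) is a direct covering argument, and I would dispose of it first. Suppose $\mu$ is as in (2) and $\{B_j\}$ is any $\delta$-cover of $A$. If $\diam(B_j)=d_j$ and $B_j\neq\emptyset$, pick $x_j\in B_j$; then $B_j\subseteq B(x_j,d_j)$, so $\mu(B_j)\leq d_j^t$ (enlarging the radius slightly if one insists on open balls). Hence
$$
0<\mu(A)\leq\sum_j \mu(B_j)\leq \sum_j \diam(B_j)^t .
$$
Taking the infimum over covers gives $\HH^t_\delta(A)\geq\mu(A)$ for every $\delta$, and therefore $\HH^t(A)\geq \mu(A)>0$.

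For (1)$\Rightarrow$(2) I would follow the classical dyadic construction, first for a compact set $K$. Note that $\HH^t(K)>0$ if and only if $\HH^t_\infty(K)>0$, so we may work with the content. Fix $m\in\NN$. To each closed dyadic cube $Q$ of side $2^{-m}$ meeting $K$, give mass $2^{-mt}$, spread uniformly on $Q$; this defines $\mu_m^{(m)}$. Inductively, for $k=m-1,m-2,\dots$ and each dyadic cube $Q$ of side $2^{-k}$, do the following: if $\mu^{(k+1)}_m(Q)>2^{-kt}$, multiply the restriction of $\mu^{(k+1)}_m$ to $Q$ by $2^{-kt}/\mu^{(k+1)}_m(Q)$; otherwise leave it unchanged. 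Stopping at a level $k_0$ with $2^{-k_0}\gtrsim\diam K$ gives a measure $\mu_m$ with the following two properties:
\begin{itemize}
\item[(a)] $\mu_m(Q)\leq \ell(Q)^t$ for every dyadic $Q$ with side at least $2^{-m}$;
\item[(b)] every point of $K$ lies in some dyadic cube $Q$ with $\mu_m(Q)=\ell(Q)^t$.
\end{itemize}
Selecting the maximal such cubes yields a disjoint cover of $K$. Consequently
$$
\mu_m(\RR^n)=\sum \ell(Q)^t\geq c(n,t)\,\HH^t_\infty(K)>0,
$$
while $\mu_m(\RR^n)\leq 2^{-k_0t}$, which is uniformly bounded. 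All $\mu_m$ are supported in a fixed neighbourhood of $K$, so a weak$^*$ limit $\mu$ exists. It is supported on $K$ (the supports shrink to $K$), has mass at least $c\,\HH^t_\infty(K)>0$, and inherits (a) for all dyadic cubes.

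Next I would pass from cubes to balls. A ball $B(x,r)$ with $2^{-k-1}<r\leq 2^{-k}$ meets at most $3^n$ dyadic cubes of side $2^{-k}$, so $\mu(B(x,r))\leq 3^n2^{t}r^t$. Replacing $\mu$ by $\mu/(3^n2^t)$ gives the normalisation $\mu(B(x,r))\leq r^t$ with $\mu(K)>0$. Weak$^*$ limits only preserve upper bounds on closed sets; since each closed dyadic cube is compact, this is the right direction and causes no trouble.

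The main obstacle is the reduction from a Borel set $A$ to a compact subset $K\subseteq A$ with $\HH^t(K)>0$. This is the Davies--Howroyd theorem on compact subsets of positive (even finite positive) Hausdorff measure in Suslin sets. I would invoke it from \cite{Mattila} rather than reprove it. The only alternative I can see is to bound the content $\HH^t_\infty$ along increasing sequences of compacts, which itself requires the increasing sets lemma for net measures. Once such a $K$ is available, the measure built for $K$ satisfies $\mu(A)\geq\mu(K)>0$, which completes (1)$\Rightarrow$(2).
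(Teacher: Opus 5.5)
Your route is the classical one from \cite{Mattila}: a dyadic mass distribution for compact $K$, then Davies' theorem to find a compact $K\subseteq A$ with $\HH^t(K)>0$. The paper itself gives no proof and simply quotes the lemma from that source. Your covering argument for (2)$\Rightarrow$(1) is fine, as is the construction of $\mu_m$ with properties (a) and (b). There is, however, a genuine error in the passage to the weak$^*$ limit.

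You state the weak$^*$ limit principle backwards. Weak$^*$ convergence $\mu_m\to\mu$ (with supports in a fixed compact set) gives $\mu(U)\le\liminf_m\mu_m(U)$ for open $U$ and $\mu(F)\ge\limsup_m\mu_m(F)$ for compact $F$. So upper bounds pass to the limit on open sets, not closed ones. Consequently the limit does not inherit (a), even for closed dyadic cubes. Concretely, take $n=2$, $t=1$, $K=\{0\}\times[0,1]$:
\begin{itemize}
\item no cube is ever rescaled, and $\mu_m$ has density $2^m$ on $[-2^{-m},2^{-m}]\times[-2^{-m},1+2^{-m}]$;
\item the limit is $\mu=2\,\HH^1|_K$;
\item the closed dyadic square $Q=[0,\tfrac12]^2$ has $\mu(Q)=1>\tfrac12=\ell(Q)^t$, although $\mu_m(Q)=\tfrac12$ for every $m$.
\end{itemize}
The masses of the two columns adjacent to $\{x_1=0\}$ both land on their common face.

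The repair is to use (a) only at the finite stages and take the limit on open balls. Let $2^{-k-1}<r\le 2^{-k}$ and $m\ge k$. The open ball $B(x,r)$ meets at most $3^n$ dyadic cubes of side $2^{-k}$, so $\mu_m(B(x,r))\le 3^n2^{-kt}$ by (a) for $\mu_m$. If $2^{-k}$ exceeds your stopping scale $2^{-k_0}$, use the uniform total-mass bound $\mu_m(\RR^n)\lesssim 2^{-k_0t}$ instead. Therefore $\mu(B(x,r))\le\liminf_m\mu_m(B(x,r))\le 3^n2^tr^t$, up to that harmless constant. With this replacing the sentence about closed sets being ``the right direction'', your normalisation step goes through and the argument is complete.
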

	
	The following two are slightly modified versions of results in \cite {Brezis2002} (also see \cite{BoBrMi}).
	\begin{lemma}\label{lm-pointwise} 
		Let $R>0$, $\BB=B(0,R)$, $p\in (1,\infty)$. If $f\in C^2(\BB)$, then 
		there is a constant $K=K(n,p)$ such that, for any $x\in \BB$, 
		$$ \lim_{s\to1-} (1-s) \I{\BB}\frac{|f(x)-f(y)|^p}{|x-y|^{n+sp}}\d y  =K (n,p) |\nabla f|^p. 
		$$ 
	\end{lemma}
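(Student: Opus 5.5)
The plan is to reduce to an interior pointwise statement via Taylor expansion. Fix $x\in\BB$ and choose $\delta>0$ with $B(x,\delta)\subset\BB$; for convenience shrink $\delta$ so that $\overline{B(x,\delta)}$ lies in a compact subset of $\BB$. On this compact set $f\in C^2$ gives a uniform bound $|D^2f|\le M$. Split the integral over $\BB$ into $B(x,\delta)$ and $\BB\setminus B(x,\delta)$.

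\emph{Far part.} On $\BB\setminus B(x,\delta)$ we have $|x-y|\ge\delta$. If $f$ is bounded on $\BB$, for instance when $f\in C^2(\overline{\BB})$, then
\[
(1-s)\I{\BB\setminus B(x,\delta)}\frac{|f(x)-f(y)|^p}{|x-y|^{n+sp}}\d y\le (1-s)\,2^p\|f\|_\infty^p\,\delta^{-n-sp}|\BB|\to 0 .
\]
If $f$ is unbounded near $\partial\BB$, I would either read the hypothesis as $C^2$ up to the boundary, as is implicit in \cite{Brezis2002}, or note that the statement is only meaningful when the tail is finite for some $s<1$. In the latter case the tail is uniformly controlled for $s$ near $1$, since $|x-y|^{-n-sp}\le \delta^{-n-p}\max(1,\delta)^{p}$ there, and the factor $(1-s)$ kills it.

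\emph{Near part.} Write $y=x+rw$ with $w\in\mathbb{S}^{n-1}$ and $0<r<\delta$. Taylor's theorem gives
\[
f(y)-f(x)=r\,\nabla f(x)\cdot w+E,\qquad |E|\le Mr^2 .
\]
By the elementary inequality $\big||a+b|^p-|a|^p\big|\le p(|a|+|b|)^{p-1}|b|$, this yields
\[
\big||f(y)-f(x)|^p-r^p|\nabla f(x)\cdot w|^p\big|\le C r^{p+1},
\]
with $C$ depending on $|\nabla f(x)|$, $M$ and $p$. The main term becomes
\[
(1-s)\I{0}{\delta}\I{\mathbb{S}^{n-1}} r^{p-sp-1}|\nabla f(x)\cdot w|^p\d\HH^{n-1}(w)\d r=\frac{\delta^{p(1-s)}}{p}\I{\mathbb{S}^{n-1}}|\nabla f(x)\cdot w|^p\d\HH^{n-1}(w),
\]
which tends to $\frac1p\I{\mathbb{S}^{n-1}}|e\cdot w|^p\d\HH^{n-1}(w)\,|\nabla f(x)|^p$ by rotation invariance, where $e$ is any unit vector. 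This identifies
\[
K(n,p)=\frac1p\I{\mathbb{S}^{n-1}}|w_1|^p\d\HH^{n-1}(w).
\]
The error term is bounded by
\[
(1-s)\,C\I{0}{\delta}r^{p+1-sp-1}\d r=(1-s)\,C\,\frac{\delta^{p(1-s)+1}}{p(1-s)+1}\to0 .
\]

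\emph{Conclusion.} Adding the two parts gives the limit. The only delicate point is the far part, that is, ensuring the tail integral is finite with enough uniformity. This is where the regularity of $f$ near $\partial\BB$ enters, and I would handle it by assuming $f\in C^2(\overline{\BB})$, which is the case used later.
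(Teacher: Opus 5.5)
Your argument is correct: a Taylor expansion at $x$ with the remainder controlled on a compact ball, the tail killed by the factor $(1-s)$, and $K(n,p)=\frac1p\int_{\mathbb{S}^{n-1}}|w_1|^p\,d\HH^{n-1}(w)$. This is the standard argument of Bourgain--Brezis--Mironescu and Brezis, which is all the paper offers (it cites rather than proves the lemma). Your caveat about the far part is also justified: as literally stated for $f\in C^2(\BB)$ the integral is $+\infty$ for every $s$ whenever $f\notin L^p(\BB)$ (e.g.\ $f(y)=(R^2-|y|^2)^{-1}$). So an extra hypothesis such as $f\in C^2(\overline{\BB})$, or just $f\in L^p(\BB)$, is genuinely needed, and with it your tail bound completes the proof.
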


	\begin{lemma}\label{bbm-formula} 
		Let $R>0$, $\BB=B(0,R)$, $p\in (1,\infty)$. If $f\in L^1_{\loc}(\BB)$ is 
		such that 
		$$ 
		L_{n,p,R}(f):=\liminf_{s\to1-} (1-s) \I{\BB}\I{\BB}\frac{|f(x)-f(y)|^p}{| 
			x-y|^{n+sp}}\d y \d x <\infty, 
		$$ 
		then $f$ has a weak derivative and $\nabla f$ such that 
		$$
		\|\nabla f\|_{L^p(\Om)} \leq L_{n,p,R}(f)<\infty .
		$$
	\end{lemma}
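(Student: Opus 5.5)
The plan is the classical Bourgain--Brezis--Mironescu argument: regularise $f$ by mollification, apply \Cref{lm-pointwise} to the smooth approximants, and pass to the limit using weak compactness in $L^p$. Throughout, the conclusion is read with the natural normalisation $K(n,p)\|\nabla f\|_{L^p}^p\le L_{n,p,R}(f)$, which is the form the argument produces. I also read $\Om$ in the conclusion as $\BB$.

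\emph{Step 1 (mollification does not increase the energy).} Fix $\delta\in(0,R)$ and a standard mollifier $\rho_\varepsilon$ with $\varepsilon<\delta$. Set $f_\varepsilon=f*\rho_\varepsilon$ on $B_\delta:=B(0,R-\delta)$. For $x,y\in B_\delta$ and $|h|<\varepsilon$, both $x-h$ and $y-h$ lie in $\BB$. Jensen's inequality then gives
\[
|f_\varepsilon(x)-f_\varepsilon(y)|^p\le \I{\RR^n}\rho_\varepsilon(h)\,|f(x-h)-f(y-h)|^p\d h .
\]
Integrate against $|x-y|^{-n-sp}$ over $B_\delta\times B_\delta$, apply Fubini, and change variables $x'=x-h$, $y'=y-h$. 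This yields
\[
\I{B_\delta}\I{B_\delta}\frac{|f_\varepsilon(x)-f_\varepsilon(y)|^p}{|x-y|^{n+sp}}\d y\d x\le \I{\BB}\I{\BB}\frac{|f(x)-f(y)|^p}{|x-y|^{n+sp}}\d y\d x
\]
for every $s$.

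\emph{Step 2 (smooth case).} Choose a sequence $s_k\to1-$ along which $(1-s_k)\I{\BB}\I{\BB}\frac{|f(x)-f(y)|^p}{|x-y|^{n+s_kp}}\d y\d x\to L_{n,p,R}(f)$. Since $f_\varepsilon\in C^\infty(\overline{B_\delta})\subseteq C^2(B_\delta)$, \Cref{lm-pointwise} on the ball $B_\delta$ gives pointwise convergence
\[
(1-s_k)\I{B_\delta}\frac{|f_\varepsilon(x)-f_\varepsilon(y)|^p}{|x-y|^{n+s_kp}}\d y\to K(n,p)|\nabla f_\varepsilon(x)|^p .
\]
Fatou's lemma in $x$, combined with Step 1, then gives
\[
K(n,p)\I{B_\delta}|\nabla f_\varepsilon|^p\d x\le L_{n,p,R}(f),
\]
uniformly in $\varepsilon<\delta$. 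The step that needs care is exactly this exchange of limit and integral. Fatou's lemma handles it only because the integrands are nonnegative and because we work along a sequence realising the $\liminf$. This is why the argument is first run for $f_\varepsilon$ rather than for $f$, whose pointwise limit is not available.

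\emph{Step 3 (passage to the limit).} As $\varepsilon\to0$, $f_\varepsilon\to f$ in $L^1_{\loc}(B_\delta)$. The family $\nabla f_\varepsilon$ is bounded in $L^p(B_\delta)$ with $1<p<\infty$, so a subsequence converges weakly to some $G\in L^p(B_\delta;\RR^n)$. Testing against $\varphi\in C_c^\infty(B_\delta)$ gives
\[
\I{}f\,\partial_i\varphi=\lim\I{}f_\varepsilon\,\partial_i\varphi=-\lim\I{}\partial_if_\varepsilon\,\varphi=-\I{}G_i\varphi ,
\]
so $G=\nabla f$ weakly on $B_\delta$. Weak lower semicontinuity of the $L^p$ norm gives $K\|\nabla f\|_{L^p(B_\delta)}^p\le L_{n,p,R}(f)$. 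The weak gradients on the nested balls $B_\delta$ agree on their overlaps, so they define $\nabla f$ on all of $\BB$. Letting $\delta\to0$ by monotone convergence yields $K(n,p)\|\nabla f\|_{L^p(\BB)}^p\le L_{n,p,R}(f)<\infty$, which is the asserted estimate up to the normalising constant $K(n,p)$.
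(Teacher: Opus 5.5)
Your argument is correct and is essentially the classical Bourgain--Brezis--Mironescu proof. The paper gives no proof of its own here and only cites Brezis and Bourgain--Brezis--Mironescu; their argument uses Jensen's inequality for the mollified functions on an inner ball, the smooth-case pointwise limit of \Cref{lm-pointwise} with Fatou, and weak compactness in $L^p$, exactly as you do. Your reading of the conclusion as $K(n,p)\|\nabla f\|_{L^p(\BB)}^p\le L_{n,p,R}(f)$ is the right one: the literal inequality fails under $f\mapsto\lambda f$ for small $\lambda$, and your form is exactly what is used in the proof of \Cref{th-main}.
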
 
	\vspace{1cm}
	
	\section{Extension Via Embedding}\label{EVE}
	In this section, we sometimes use the following convention for a simpler presentation: for two real numbers, we write $A\leqa B$ when there is some constant $C=C(n,p,\Om)$ such that $A\leq CB$.

	We start this section by proving \Cref{Ponce1}.
	
	\begin{proof}[\textbf{Proof of the \Cref{Ponce1}}]
		Observe that
		\begin{equation*}
			\I{t=0}^r t^{q-sq-1} \d t = \frac{r^{q-sq}}{q(1-s)} .
		\end{equation*}
		Hence, for every $x\in \Om$ and $r>0$,
		\begin{align*}
			&\Ia{y\in B(x,r)\cap\Om} \lp \Ia{z\in B(x,r)\cap\Om}\frac{|f(y)-f(z)|^q}{|y-z|^q} \d z\rp ^{\frac{p}{q}} \d y\\
			&\quad= \Ia{y\in B(x,r)\cap\Om} \lp \frac{q(1-s)}{r^{q-sq}}\I{t=0}^r t^{q-sq-1}  \Ia{z\in B(x,r)\cap\Om} \frac{|f(y)-f(z)|^q}{|y-z|^q} \d z \d t \rp ^{\frac{p}{q}} \d y \\
			&\quad= \frac{(q(1-s))^{\frac{p}{q}}}{|B(x,r)\cap \Om|^{\frac{p}{q}}}\Ia{y\in B(x,r)\cap\Om} \lp \frac{1}{r^{q-sq}}\I{t=0}^r t^{q-sq-1}  \I{z\in B(x,r)\cap\Om} \frac{|f(y)-f(z)|^q}{|y-z|^q} \d z \d t \rp ^{\frac{p}{q}} \d y \\
			&\quad\leq \frac{(q(1-s))^{\frac{p}{q}}}{|B(x,r)\cap \Om|^{\frac{p}{q}}}\Ia{y\in B(x,r)\cap\Om} \lp \frac{1}{r^{q-sq}}\I{t=0}^r t^{q-sq-1}  \I{z\in B(x,r)\cap\Om \cap B(y,t)} \frac{|f(y)-f(z)|^q}{|y-z|^q} \d z \d t\rp ^{\frac{p}{q}} \d y \\
			&\quad  \quad+  \frac{(q(1-s))^{\frac{p}{q}}}{|B(x,r)\cap \Om|^{\frac{p}{q}}}\Ia{y\in B(x,r)\cap\Om}\lp \frac{1}{r^{q-sq}}\I{t=0}^r t^{q-sq-1}  \I{z\in B(x,r)\cap\Om\setminus B(y,t)} \frac{|f(y)-f(z)|^q}{|y-z|^q} \d z \d t  \rp ^{\frac{p}{q}} \d y \\
			&\quad = I_1+I_2.
		\end{align*}
		We evaluate $I_1$ and $I_2$ separately below. For $I_1$, we use Fubini's theorem and measure density of $\Om$:
		\begin{align*}
			I_1&\leq 
			\frac{(q(1-s))^{\frac{p}{q}}}{|B(x,r)\cap \Om|^{\frac{p}{q}}}\Ia{y\in B(x,r)\cap\Om} \lp \frac{r^n}{r^{q-sq}}\I{t=0}^r t^{q-sq-n-1}  \I{z\in B(y,t)\cap \Om} \frac{|f(y)-f(z)|^q}{|y-z|^q} \d z \d t\rp ^{\frac{p}{q}} \d y\\
			&\leq \frac{(q(1-s))^{\frac{p}{q}}}{|B(x,r)\cap \Om|^{\frac{p}{q}}}\Ia{y\in B(x,r)\cap\Om} \lp \frac{r^n}{r^{q-sq}}\I{z\in B(x,r)\cap\Om} \I{t=|y-z|}^{r}t^{q-sq-n-1}\frac{|f(y)-f(z)|^q}{|y-z|^q} \d t \d z\rp ^{\frac{p}{q}} \d y\\
			&\leq \frac{(q(1-s))^{\frac{p}{q}}}{|B(x,r)\cap \Om|^{\frac{p}{q}}}\Ia{y\in B(x,r)\cap\Om} \lp \frac{r^n}{r^{q-sq}}\I{z\in B(x,r)\cap\Om} \I{t=|y-z|}^{\infty}t^{q-sq-n-1}\frac{|f(y)-f(z)|^q}{|y-z|^q} \d t \d z\rp ^{\frac{p}{q}} \d y\\
			&= \frac{(q(1-s))^{\frac{p}{q}}r^{(\frac{np}{q}+ sp-p)}}{(n-q+sq)^{\frac{p}{q}} |B(x,r)\cap\Om|^{\frac{p}{q}}}  \Ia{y\in B(x,r)\cap\Om} \lp \I{z\in B(x,r)\cap\Om} \frac{|f(y)-f(z)|^q}{|y-z|^{n+sq}} \d z\rp ^{\frac{p}{q}} \d y\\
			&= \frac{(q(1-s))^{\frac{p}{q}}r^{( sp-p)}}{(n-q+sq)^\frac{p}{q}}  \Ia{y\in B(x,r)\cap\Om} \lp \I{z\in B(x,r)\cap\Om} \frac{|f(y)-f(z)|^q}{|y-z|^{n+sq}} \d z \rp ^{\frac{p}{q}}\d y
		\end{align*}
		For $I_2$, we have
		\begin{align*}
			I_2&= \frac{(q(1-s))^{\frac{p}{q}}}{|B(x,r)\cap \Om|^{\frac{p}{q}}}\Ia{y\in B(x,r)\cap\Om}\lp \frac{1}{r^{q-sq}}\I{t=0}^r t^{q-sq-1}  \I{z\in B(x,r)\cap\Om\setminus B(y,t)} \frac{|f(y)-f(z)|^q}{|y-z|^q} \d z \d t  \rp ^{\frac{p}{q}} \d y\\
			&\leq  \frac{(q(1-s))^{\frac{p}{q}}}{|B(x,r)\cap \Om|^{\frac{p}{q}}} \Ia{y\in B(x,r)\cap\Om}\lp  \frac{1}{r^{q-sq}}\I{t=0}^r t^{p-sp-1} \I{z\in [B(x,r)\cap\Om] \setminus B(y,t)} \frac{|f(y)-f(z)|^q}{|y-z|^{n+sq}} t^{n+sq-q} \d t\d z \rp ^{\frac{p}{q}} \d y\\
			&\leq  \frac{(q(1-s))^{\frac{p}{q}}}{|B(x,r)\cap \Om|^{\frac{p}{q}}} \Ia{y\in B(x,r)\cap\Om}\lp  \frac{1}{r^{q-sq}}\I{t=0}^r t^{n-1}\d t \I{z\in [B(x,r)\cap\Om] \setminus B(y,t)} \frac{|f(y)-f(z)|^q}{|y-z|^{n+sq}} \d z \rp ^{\frac{p}{q}} \d y\\
			&= \frac{(q(1-s))^{\frac{p}{q}}r^{( sp-p)}}{(n-q+sq)^{\frac{p}{q}}}  \Ia{y\in B(x,r)\cap\Om} \lp \I{z\in B(x,r)\cap\Om} \frac{|f(y)-f(z)|^q}{|y-z|^{n+sq}} \d z \rp ^{\frac{p}{q}}\d y
		\end{align*}
		Adding $I_1$ and $I_2$, from our initial estimate, we get
		
		\begin{equation}\label{49}
			\begin{split}
				&\Ia{y\in B(x,r)\cap\Om}\lp \Ia{z\in B(x,r)\cap\Om} \frac{|f(y)-f(z)|^q}{|y-z|^q} \d z \rp ^{\frac{p}{q}}\d y\\	
				& \qquad \qquad \leq \frac{(q(1-s))^{\frac{p}{q}}r^{( sp-p)}}{(n-q+sq)^\frac{p}{q}} \Ia{y\in B(x,r)\cap\Om} \lp \I{B(x,r)\cap\Om} \frac{|f(y)-f(z)|^q}{|y-z|^{n+sq}}  \d z\rp ^{\frac{p}{q}} \d y.
			\end{split}  
		\end{equation}

		For $y,z\in B(x,r)\cap\Om$ implies $|y-z|<2r$. So, 
		\begin{equation}\label{50}
			\begin{split}
				(2r)^{-p}\Ia{y\in B(x,r)\cap\Om}  &\abs{f(y)-f_{B(x,r)\cap\Om}}^p \d y\\
				&\leq\Ia{y\in B(x,r)\cap\Om} \Ia{z\in B(x,r)\cap\Om} \frac{|f(y)-f(z)|^p}{(2r)^p} \d z \d y\\
				&\leq \Ia{y\in B(x,r)\cap\Om} \Ia{B(x,r)\cap\Om} \frac{|f(y)-f(z)|^p}{|y-z|^{p}}  \d z \d y\\
				& \leq \Ia{y\in B(x,r)\cap\Om} \lp \Ia{B(x,r)\cap\Om} \frac{|f(y)-f(z)|^q}{|y-z|^{q}}  \d z \rp ^{\frac{p}{q}}\d y
			\end{split}
		\end{equation}
		By using \cref{49}, from \cref{50} we get that 
		\begin{equation*}
			\begin{split}
				\Ia{y\in B(x,r)\cap\Om}  &\abs{f(y)-f_{B(x,r)\cap\Om}}^p \d y\\
				&\leq \frac{(q(1-s))^{\frac{p}{q}}r^{sp}}{2^p(n-q+sq)^{\frac{p}{q}}}\Ia{y\in B(x,r)\cap\Om} \lp \I{B(x,r)\cap\Om} \frac{|f(y)-f(z)|^q}{|y-z|^{n+sq}}  \d z\rp ^{\frac{p}{q}} \d y.  
			\end{split}
		\end{equation*}
	\end{proof}

	The following lemma establishes the fractional extension estimates. Our approach utilises the refined estimates from \Cref{Ponce1} to achieve the sharp $s$-dependence as $s\to 1$, which is essential for our main characterisation. This provides a more precise control over the constants than what is available in \cite[Theorem 1.1]{Zhou2015}.

	\begin{lemma}\label{frac-extension} 
		Let $\Om \subseteq \BB:=B(0,R)$ be an open set with measure density condition, and $p\in (1,\infty)$. Then there exists $s_0\in(0,1)$ such that for any $s\in(s_0,1)$,  
		
		\begin{equation}\label{eq1}
			\I{\BB}\I{\BB}\frac{|Ef(x)-Ef(y)|^p}{|x-y|^{n+sp}}\d y \d x 
			\leq C(n,p,\Om,R) \I{\Om}\I{\Om}\frac{|f(x)-f(y)|^p}{|x-y|^{n+sp}} \d y \d x.
		\end{equation}

	\end{lemma}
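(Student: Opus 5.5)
We will use the Whitney-type extension operator of Jonsson--Wallin/Zhou. Let $D:=\RR^n\setminus\Ombar$ and take its Whitney covering $\mathcal{W}(D)$ from \Cref{lem:whitney}, with the partition of unity $\{\varphi_Q\}$ from \Cref{POU}. For each $Q\in\mathcal{W}(D)$ with $\diam Q$ small (comparable to or below $\diam\Om$), pick the point $x_Q^\star\in\partial\Om$ (or nearby in $\Ombar$) from \Cref{lem:whitney}(4). Set $Q^\star:=B(x_Q^\star,\diam Q)\cap\Om$; measure density gives $|Q^\star|\gtrsim(\diam Q)^n$. Define
\[
Ef:=f \text{ on } \Om,\qquad Ef:=\sum_{Q}\varphi_Q\, f_{Q^\star}\text{ on } \BB\setminus\Ombar.
\]
Large cubes get the value $f_\Om$; since $\BB$ is bounded, only finitely many Whitney scales occur inside $\BB$. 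We also need $|\partial\Om|=0$, which follows from measure density via the Lebesgue density theorem.

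The double integral over $\BB\times\BB$ splits into three parts: $\Om\times\Om$, which is trivial; $(\BB\setminus\Om)\times(\BB\setminus\Om)$; and the mixed part $\Om\times(\BB\setminus\Om)$. For the mixed part, fix $x\in\Om$ and $y\in Q$ with $Q$ a Whitney cube. Then
\[
|f(x)-Ef(y)|\le\sum_{P\ni y}\varphi_P\,|f(x)-f_{P^\star}|,
\]
and $P^\star$ lies in a ball $B$ centred in $\Om$ of radius $\approx|x-y|+\diam Q\lesssim|x-y|$, because $\dist(y,\Om)\approx\diam Q$. We average $|f(x)-f(z)|^p$ over $z\in P^\star$ and use $|P^\star|\gtrsim\diam(P)^n$. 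Integrating in $y$ over Whitney cubes, each point $z\in\Om$ is reached by cubes $P$ of size $\lesssim|x-z|$ at distance $\approx$ their size. Summing dyadically, the weight $\sum_P |P|\,|P^\star|^{-1}|x-y|^{-n-sp}$ is bounded by $C|x-z|^{-n-sp}$. The result is $\lesssim\iint_{\Om\times\Om}$.

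For the exterior part, $x\in Q$ and $y\in R$, there are two regimes. When $|x-y|\lesssim\diam Q$ (neighbouring cubes), we use $\sum_P\nabla\varphi_P=0$ to get $|\nabla Ef(x)|\lesssim(\diam Q)^{-1}\sum_{P\sim Q}|f_{P^\star}-f_{Q^\star}|$. Then $|Ef(x)-Ef(y)|\le|x-y|\sup|\nabla Ef|$, and integrating $|x-y|^{p-n-sp}$ over $|x-y|\lesssim\diam Q$ produces the factor $(\diam Q)^{p-sp}/(p(1-s))$. The oscillation $|f_{P^\star}-f_{Q^\star}|^p$ is bounded by $\Ia{}|f-f_B|^p$ over a ball $B\supset P^\star\cup Q^\star$ of radius $\approx\diam Q$ centred in $\Om$. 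Here we invoke \Cref{Ponce1} with $q=p$: it gives $(1-s)(\diam Q)^{sp}\Ia{B\cap\Om}\I{B\cap\Om}\frac{|f(u)-f(v)|^p}{|u-v|^{n+sp}}$. The factor $(1-s)$ exactly cancels the $1/(1-s)$, and this is where the sharp $s$-dependence is needed. The powers of $\diam Q$ combine to $(\diam Q)^{n}\cdot(\diam Q)^{-n}$. Since the balls $B$ for different $Q$ have bounded overlap at each scale, summing over $Q$ requires controlling $\sum_Q\chi_{B_Q\cap\Om}(u)\chi_{B_Q\cap\Om}(v)$. That is bounded only if $|u-v|\lesssim\diam Q$, giving at most boundedly many cubes per scale and a geometric count of scales. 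The count must be absorbed by the kernel restriction $|u-v|\lesssim\diam Q$. In the far regime, $|x-y|\gtrsim\max(\diam Q,\diam R)$, we write $|Ef(x)-Ef(y)|\le\sum\varphi_P\varphi_{P'}\Ia{P^\star}\Ia{P'^\star}|f(u)-f(v)|$. Since $|u-v|\lesssim|x-y|$ and $|x-y|\lesssim|u-v|+\diam P+\diam P'$, this is handled like the mixed part by a Jensen/dyadic summation.

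The main obstacle is the near-diagonal exterior term. There one must both keep the constant uniform as $s\to1$, which is why \Cref{Ponce1} replaces a plain Poincaré inequality, and verify that summing the local fractional energies over all Whitney cubes and scales does not lose a factor like $\log$ or $1/(1-s)$. The choice of $s_0$ enters here and in the dyadic sums: we need $p-sp<p$ and the geometric series in the far regime to converge uniformly, which is guaranteed for $s$ bounded away from $0$.
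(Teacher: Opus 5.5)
Your architecture matches the paper's: the same Whitney extension, the same three-way split, a near/far split of the exterior term, and \Cref{Ponce1} used to cancel the $1/(1-s)$. The proof breaks, however, at the summation step you yourself flag as the main obstacle, and your proposed resolution does not work. After \Cref{Ponce1} with exponent $p$ on the left and $q=p$, the powers of $\diam Q$ and the factors $(1-s)^{\pm1}$ cancel. The contribution of a Whitney cube $Q$ is then $\approx\I{B_Q\cap\Om}\I{B_Q\cap\Om}\kappa(u,v)\d u\d v$, where $\kappa(u,v)=|f(u)-f(v)|^p|u-v|^{-n-sp}$. Summing over $Q$ gives $\iint\kappa(u,v)N(u,v)$ with $N(u,v)=\#\{Q:\,u,v\in B_Q\}$. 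The constraint $|u-v|\lesssim\diam Q$ bounds the scale only from below. For a ball $\Om$ and $u,v$ near $\partial\Om$ there is such a $Q$ at every dyadic scale between $\max\{|u-v|,\dist(u,\partial\Om)\}$ and $\diam\Om$, so $N(u,v)\approx\log\bigl(\diam\Om/\max\{|u-v|,\dist(u,\partial\Om)\}\bigr)$. The kernel has no decay in $\diam Q$ that could turn this into a geometric series.

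This is not merely a lossy step. Take $f=\sum_j a_j\psi_j$, with $\psi_j$ oscillating at scale $2^{-j}$ in the boundary layer of width $2^{-j}$ and $a_j^p=2^{j(1-sp)}/j^2$. Then $[f]_{W^{s,p}(\Om)}^p\approx\sum_j j^{-2}<\infty$, while your bound is $\approx\sum_j j^{-1}=\infty$.

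The missing idea, which is what the paper does, is to apply \Cref{Ponce1} with exponent $1$ on the left (and $q=p$), and to enlarge the inner integral to all of $\Om$:
\[
\Ia{20Q_0\cap\Om}|f-f_{20Q_0\cap\Om}|\lesssim(1-s)^{1/p}\diam(Q_0)^s\Ia{20Q_0\cap\Om}G,\qquad G(w):=\Bigl(\I{\Om}\frac{|f(z)-f(w)|^p}{|z-w|^{n+sp}}\d z\Bigr)^{1/p}.
\]
By measure density the last average is $\lesssim\mathcal M(G\chi_\Om)(x)$ for every $x\in Q_0$. Hence the whole near-diagonal term is $\lesssim\I{\BB\setminus\Om}\mathcal M(G\chi_\Om)^p\lesssim\|G\|_{L^p(\Om)}^p=[f]_{W^{s,p}(\Om)}^p$. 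No scales are counted, since each $x$ lies in boundedly many Whitney cubes. The $L^p$-boundedness of $\mathcal M$ for $p>1$ does the work your geometric series was meant to do.

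The same mechanism breaks your mixed and far terms. Once Jensen moves the $p$-th power inside $\Ia{P^\star}$, the claimed bound $\sum_{P:\,z\in P^\star}|P|\,|P^\star|^{-1}\sup_{y\in 2P}|x-y|^{-n-sp}\lesssim|x-z|^{-n-sp}$ is false. Cubes $P$ with $z\in P^\star$ can occur at every dyadic scale between $\dist(z,\partial\Om)$ and $|x-z|$, each contributing $\approx|x-z|^{-n-sp}$. This produces an extra factor $\log(|x-z|/\dist(z,\partial\Om))$; in effect you are using $\mathcal M$ on $L^1$.

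The paper instead keeps $L^1$ averages. From $|Ef(x)-f(y)|\lesssim\Ia{20Q_0\cap\Om}|f(z)-f(y)|\d z$ and $|z-y|\lesssim|x-y|$ one gets $|Ef(x)-f(y)|\,|x-y|^{-\frac np-s}\lesssim\mathcal M(h_y)(x)$, where $h_y=|f(\cdot)-f(y)|\,|\cdot-y|^{-\frac np-s}\chi_\Om$. In the far exterior regime, the double average is dominated by the iterated maximal function $(\mathcal M\times\mathcal M)$ of $|f(z)-f(w)|\,|z-w|^{-\frac np-s}\chi_\Om(z)\chi_\Om(w)$. The maximal theorem in $L^p$ then closes each estimate.

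Finally, $s_0$ does not come from any geometric series. It is dictated by the hypothesis $s>\max\{1-\frac np,0\}$ of \Cref{Ponce1}, needed for $n-p+sp>0$, which you use without checking.
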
 
	\begin{proof} 
		Without loss of generality, we assume $R=4$, that is $\BB=B(0,4)$ as we can always move to the general case by a scaling. We also recall that an open set satisfying measure density condition must have a boundary of Lebesgue measure zero (see \cite[Lemma~3.5]{HKT-8}), so we need not be careful about $x\in \partial \Om$ when dealing with integrals. 
		
		Consider $\mathcal{W}=\mathcal{W}(\RR^n\setminus \Ombar )$. 
		For each $Q\in \mathcal{W}$, let $x_Q^\star$ be as in \Cref{lem:whitney}. We define
		$$ B_{Q}^\star:= B(x_Q^\star,\diam(Q))\cap \Om.$$
		For $x\in\BB\setminus\Ombar $, denote by $\mathcal{W}_x$ the collection of all $Q\in \mathcal{W}$ such that $x\in Q$. Then from \Cref{lem:whitney}, $\#\mathcal{W}_x\leq M$ for some $M\in \NN$. 
		
		Note that, if $Q\in \mathcal{W}$ be such that $\diam(Q)\geq 1$, then, for $z\in 2Q, d(z,\Om) \geq 8\diam(Q)> 8$, which is not possible since $\BB=B(0,4)$. Hence, we have
		$$
		\diam(Q)< 1 \quad \text{ for all } Q\in \mathcal{W}.
		$$
		Also, for $Q\in \mathcal{W}_x$, by \Cref{lem:whitney}, 
		\begin{equation}\label{eq11}
			B(x_Q^\star,\diam(Q)) \subseteq 20Q,
		\end{equation}
		and consequently $|B_Q^\star| \approx |Q|$. Now \cref{measure-density} gives
		\begin{equation}\label{MEQ}
			|B_Q^\star| \approx |20Q \cap \Om| \approx |20Q|.
		\end{equation}

		For $f\in L^{1}_{\loc}(\Om)$, we define $E$ by setting
		\begin{equation}\label{def1}
			Ef(x):= \begin{cases}
				\sum_{Q\in \mathcal{W}}f_{B_Q^\star} \varphi_Q(x), &\ {\rm if}\ x\in \BB\setminus\Ombar ,\\
				f(x),&\ {\rm if}\ x\in \Om.
				
			\end{cases}
		\end{equation}
		We write 
		\begin{equation}\label{eq14}
			\begin{split}
				\I{\BB}\I{\BB}\frac{|Ef(x)-Ef(y)|^p}{|x-y|^{n+sp}}\d y \d x
				&\leq  \I{\Om}\I{\Om}\frac{|f(x)-f(y)|^p}{|x-y|^{n+sp}} \ \d y \d x \\
				& \quad  + 2 \I{\BB\setminus \Om} \I{\Om} \frac{|Ef(x)-Ef(y)|^p}{|x-y|^{n+sp}}\d y \d x \\
				& \quad +\I{\BB\setminus \Om} \I{\BB \setminus\Om} \frac{|Ef(x)-Ef(y)|^p}{|x-y|^{n+sp}}\d y \d x \\
				& =  I_1 + 2 I_2 + I_3 .
			\end{split}
		\end{equation} 
		We will estimate $I_2 $ and $I_3$ separately below.

		\textbf{Estimate for $I_2$:}
		Let  $x\in \BB \setminus \Om$ and $y \in \Om$. Fix $Q_0\in \mathcal{W}_x$. From \cref{def1}, we get
		\begin{equation}\label{I_2}
			\begin{split}
				|Ef(x)- f(y)| &\leq \sum_{Q\in \mathcal{W}} \varphi_Q(x)|f_{B_Q^\star}-f(y)|\\
				&\leq \sum_{Q\in \mathcal{W}} \varphi_Q(x)\Ia{B_Q^\star}|f(z)-f(y)| \d z.
			\end{split}
		\end{equation}
		Combining \cref{I_2,eq11,MEQ} with \Cref{lem:whitney,POU}, we get
		\begin{equation}\label{eq3.6}
			|Ef(x)- f(y)|
			\leq  C(n) \Ia{20Q_0\cap \Om}|f(z)-f(y)| \d z.
		\end{equation}

		For $y\in \Om$, $z\in 20Q_0\cap \Om $, since $|x-y| \geq d(x,\Om)$, we always have
		\begin{equation*}
			|z-y|\lesssim |x-y|.
		\end{equation*}
		Hence \cref{eq3.6} gives
		\begin{equation}\label{neeq}
			\begin{split}
				\frac{ |Ef(x)- f(y)|}{|x-y|^{\frac{n}{p}+s}} &\leq C(n,p) \Ia{20Q_0\cap \Om } \frac{ |f(z)- f(y)|}{|z-y|^{\frac{n}{p}+s}} \d z\\
				& \leq C(n,p)\Ia{20Q_0 } \frac{ |f(z)- f(y)|}{|z-y|^{\frac{n}{p}+s}}\chi_{\Om}(z) \d z\\
				& \leq C(n,p)\mathcal{ M}\lp \frac{ |f(\cdot)- f(y)|}{|(\cdot)-y|^{\frac{n}{p}+s}}\chi_{\Om}(\cdot)\rp (x).
			\end{split}
		\end{equation}
		
		Raising both sides to power $p$ and integrating, and then using \Cref{HLMI} we get, for any $y\in \Om$,
		\begin{equation}\label{eq3.9}
			\begin{split}
				I_2=\I{\BB\setminus \Om} \frac{|Ef(x)-Ef(y)|^p}{|x-y|^{n+sp}}\d x 
				& \leqa \I{\RR^n} \lb  \mathcal{M} \lp \frac{|f(\cdot)-f(y)|}{|(\cdot)-y|^{\frac{n}{p}+s}}\chi_{\Om}(\cdot)\rp (x)\rb^p\d x \\
				& \leqa \I{\Om} \frac{|f(x)-f(y)|^p}{|x-y|^{n+sp}} \d x .
			\end{split}
		\end{equation}
		Integrating again over $\Om$,
		\begin{equation}\label{eq13}
			I_2
			\leqa  \I{\Om}\I{\Om}\frac{|f(x)-f(y)|^p}{|x-y|^{n+sp}}\d x \d y.
		\end{equation}

		\textbf{Estimate for $I_3$:} For $x\in \BB \setminus\Om$, fix $Q_0\in \mathcal{W}$ arbitrarily so that $Q_0 \in \mathcal{W}_x$. We split the region $\BB\setminus \Om$ into $X_1(x)$ and $X_2(x)$ respectively. Set 
		$$ X_1(x):= \{y\in \BB \setminus \Om, |x-y|\geq \frac{1}{2} \max\{d(x,\Om),d(y,\Om)\}\}$$
		and
		$$ X_2(x):= \{y\in \BB \setminus \Om, |x-y|<\frac{1}{2} \max\{d(x,\Om),d(y,\Om)\}\}.$$
		We write
		\begin{equation}\label{eq15}
			\begin{split}
				I_3=\I{\BB \setminus \Om}\I{\BB \setminus \Om}\frac{|Ef(x)-Ef(y)|^p}{|x-y|^{n+sp}}\d y \d x &=  \I{\BB \setminus \Om}\I{X_1(x)}\frac{|Ef(x)-Ef(y)|^p}{|x-y|^{n+sp}}\d y \d x\\
				&+\I{\BB \setminus \Om}\I{X_2(x)}\frac{|Ef(x)-Ef(y)|^p}{|x-y|^{n+sp}}\d y \d x.
			\end{split}     
		\end{equation}
		For a $y\in X_{1}(x)$ and $Q_1 \in \mathcal{W}_y$, we have
		\begin{equation}\label{eq3.13}
			\begin{split}
				Ef(x)-Ef(y) &=  \sum_{Q \in \mathcal{W}_x}f_{B_Q^\star}\varphi_Q (x)- \sum_{R\in \mathcal{W}_y}f_{B_R^\star}\varphi_R(y)\\
				& =  \sum_{R\in \mathcal{W}_y} \varphi_R(y) \sum_{Q\in \mathcal{W}_x}f_{B_Q^\star}\varphi_Q(x) -  \sum_{Q\in \mathcal{W}_x}\varphi_Q(x)\sum_{R\in \mathcal{W}_y}f_{B_R^\star}\varphi_R(y)\\
				& =  \sum_{R\in \mathcal{W}_y}  \sum_{Q\in \mathcal{W}_x} \varphi_Q(x)\varphi_R(y)(f_{B_Q^\star}-f_{B_R^\star}).
			\end{split}
		\end{equation}
		Now, \cref{MEQ,eq11} imply
		\begin{equation}\label{eq3.15}
			\begin{split}
				| (f_{B_Q^\star}-f_{B_R^\star})| 
				& \leq \Ia{B_Q^\star} \Ia{B_R^\star} |f(z)-f(w)|\d z\ dw\\
				&\leq (C(n))^2 \Ia{20Q_1\cap \Om} \Ia{20Q_0\cap \Om} |f(z)-f(w)|\d z\ dw.
			\end{split}
		\end{equation}
		
		For $z \in 20Q_1\cap \Om$ and $w\in  20Q_0\cap \Om $, since  $|x-y| \geq \frac{1}{2}  \max\{d(x,\Om),d(y,\Om)\}\} $, we get that
		$$|z-w|\lesssim |x-y|.$$
		We use this along with \cref{eq3.13,eq3.15} to conclude that
		\begin{equation}\label{eq3.16}
			\begin{split}
				\frac{ |Ef(x)- Ef(y)|}{|x-y|^{\frac{n}{p}+s}} &\leq C(n,p) \Ia{20Q_1\cap \Om } \Ia{20Q_0\cap \Om }\frac{ |f(z)- f(w)|}{|z-w|^{\frac{n}{p}+s}} \d z\ dw\\
				& \leq C(n,p) (\mathcal{M} \times \mathcal{M})(G)(x,y)
			\end{split}
		\end{equation}
		where
		$$ G(z,w)= \frac{|f(z)-f(w)|}{|w-z|^{\frac{n}{p}+s}}\chi_{\Om}(z)\chi_{\Om}(w).$$
		By raising the power $p$ over \cref{eq3.16} and then applying \Cref{HLMI}, we conclude that
		\begin{equation}\label{eq16}
			\begin{split}
				\I{\BB \setminus \Om}\I{X_1(x)}\frac{|Ef(x)-Ef(y)|^p}{|x-y|^{n+sp}}\d y \d x  &\leqa \I{\RR^n}\I{ \RR^n} [(\mathcal{M} \times \mathcal{M})(G)(x,y)]^p  \d x \d y\\
				&\leqa 
				\I{\Om} \I{\Om} \frac{ |f(x)- f(y)|^p}{|x-y|^{n+sp}} \d x \d y.
			\end{split}
		\end{equation}
		Now, for the case $y \in X_2(x)$, notice that $\sum_{Q \in \mathcal{W}_x\cup \mathcal{W}_y}[\varphi_Q(x)-\varphi_Q(y)]=0$.
		Hence,
		\begin{equation*}
			\begin{split}
				|Ef(x)-Ef(y)| 
				&= \abs{\sum_{Q\in \mathcal{W}_x\cup \mathcal{W}_y}[\varphi_Q(x)-\varphi_Q(y)]f_{B_Q^\star}}\\
				& = \abs{\sum_{Q\in \mathcal{W}_x\cup \mathcal{W}_y}[\varphi_Q(x)-\varphi_Q(y)](f_{B_Q^\star}-f_{20Q_0\cap \Om})}\\
				& \leqa \sum_{Q\in \mathcal{W}_x\cup \mathcal{W}_y}\frac{|x-y|}{\diam{(Q)}}|f_{B_Q^\star}-f_{20Q_0\cap \Om}|\\
				& \leqa \sum_{Q\in \mathcal{W}_x\cup \mathcal{W}_y}\frac{|x-y|}{\diam{(Q)}}\Ia{20Q_0\cap \Om}|f(z)-f_{20Q_0\cap \Om}|\d z.
			\end{split}
		\end{equation*}
		By raising the power $p$ and using \Cref{lem:whitney} and \Cref{POU}, we get
		\begin{equation*}
			\begin{split}
				|Ef(x)-Ef(y)|^p 
				&\leqa \sum_{Q \in \mathcal{W}_x\cup \mathcal{W}_y}\frac{|x-y|^p}{\diam{(Q)^p}}\lp \Ia{20Q_0\cap \Om}|f(z)-f_{20Q_0\cap \Om}|\d z\rp ^p\\
				&\leqa \frac{|x-y|^p}{\diam{(Q)^p}}\lp \Ia{20Q_0\cap \Om}|f(z)-f_{20Q_0\cap \Om}|\d z\rp ^p.
			\end{split}
		\end{equation*}
		For all $Q\in \mathcal{W}_x\cup \mathcal{W}_y$, since $\diam(Q) \approx d(x,\Om)$, we get
		\begin{equation*}
			|Ef(x)-Ef(y)|^p 
			\leqa \frac{|x-y|^p}{d(x,\Om)^p}\lp \Ia{20Q_0\cap \Om}|f(z)-f_{20Q_0\cap \Om}|\d z\rp ^p.
		\end{equation*}
		Now, using \cref{measure-density}, Jensen's inequality and \Cref{Ponce1} (with $1$ in place of $p$, and $p$ in place of $q$), we get that 
		\begin{equation*}
			\begin{split}
				\I{Q_0}&\I{X_2(x)}\frac{|Ef(x)-Ef(y)|^p}{|x-y|^{n+sp}}\d y \d x \\
				& \leqa  \I{Q_0}\I{X_2(x)} \frac{|x-y|^{p-n-sp}}{d(x,\Om)^p}\lp \Ia{20Q_0\cap \Om}|f(z)-f_{20Q_0\cap \Om}|\d z\rp ^p\d y \d x\\
				&\leqa (1-s)\I{Q_0}\I{X_2}\frac{|x-y|^{p-n-sp}}{d(x,\Om)^{p-sp}}\lb \Ia{20Q_0\cap\Om}\lp \I{20Q_0\cap\Om}\frac{|f(z)-f(w)|^p}{|z-w|^{n+sp}}\d z\rp ^{\frac{1}{p}}\ dw\rb^p \d y \d x\\
				&\leqa (1-s)\I{Q_0}\I{X_2}\frac{|x-y|^{p-n-sp}}{d(x,\Om)^{p-sp}}\d y \lb \Ia{20Q_0\cap\Om}\lp \I{20 Q_0\cap\Om}\frac{|f(z)-f(w)|^p}{|z-w|^{n+sp}}\d z\rp ^{\frac{1}{p}}\ dw\rb^p \d x \\
				&\leqa \I{Q_0}\lb \Ia{20Q_0\cap\Om}\lp \I{20Q_0\cap\Om}\frac{|f(z)-f(w)|^p}{|z-w|^{n+sp}}\d z\rp ^{\frac{1}{p}}\ dw\rb^p \d x \\
				&\leqa \I{Q_0}\lb \mathcal{M}\lp \lp \I{\Om}\frac{|f(z)-f(\cdot)|^p}{|z- \cdot|^{n+sp}}\d z\rp ^{\frac{1}{p}}\chi_{\Om}(\cdot)\rp (x)\rb^p \d x .
			\end{split}   
		\end{equation*}
		Since $Q_0$ is an arbitrary cube in $\mathcal{W}$, summing over $Q_0\in \mathcal{W}$ and applying \Cref{lem:whitney}, we get that
		\begin{equation}\label{33}
			\begin{split}
				\I{\BB \setminus \Om}&\I{X_2(x)}\frac{|Ef(x)-Ef(y)|^p}{|x-y|^{n+sp}}\d y \d x\\
				&=\sum_{Q_0\in \mathcal{W}}\I{Q_0}\I{X_2(x)}\frac{|Ef(x)-Ef(y)|^p}{|x-y|^{n+sp}}\d y \d x\\
				&\leqa \sum_{Q_0\in \mathcal{W}}\I{Q_0}\lb \mathcal{M}\lp \lp \I{\Om}\frac{|f(z)-f(\cdot)|^p}{|z-w|^{n+sp}}\d z\rp ^{\frac{1}{p}}\chi_{\Om}(\cdot)\rp (x)\rb^p \d x \\
				&\leqa \I{\RR^n}\lb \mathcal{M}\lp \lp \I{\Om}\frac{|f(z)-f(\cdot)|^p}{|z-w|^{n+sp}}\d z\rp ^{\frac{1}{p}}\chi_{\Om}(\cdot)\rp (x)\rb^p \d x .
			\end{split}
		\end{equation}
		\Cref{HLMI} and \cref{33} implies
		\begin{equation}\label{eq12}
			\begin{split}
				\I{\BB \setminus \Om}\I{X_2(x)}\frac{|Ef(x)-Ef(y)|^p}{|x-y|^{n+sp}}\d y \d x
				&\leqa \I{\Om}\I{\Om}\frac{|f(x)-f(y)|^p}{|x-y|^{n+sp}}\d y \d x.
			\end{split}
		\end{equation}
		The proof follows from \cref{eq12,eq16,eq15,eq13,eq14}.
	\end{proof} 
	
	Now, can start the proof of our first main theorem.
	
	\begin{proof}[\textbf{Proof of \Cref{th-main}}]

		\textbf{(1 $\implies$ 2):} Assume (1) holds. Then \cref{bbm-hom} can be proved similarly to \cite{BoBrMi}, using \cite[Corollary 4.9]{Heronkoskela}, which essentially says that nonhomogeneous and homogeneous extension domains are the same, provided that the domain is bounded. Finally, the measure density condition, i.e. \cref{measure-density}, directly follows from \cite[Theorem 6]{HKT}.
		
		\textbf{(2 $\implies$ 3):} This is trivial as $W^{1,p}(\Om)\subseteq  \Wdot{1,p}(\Om)$.
		
		\textbf{(3 $\implies$ 1):} Assume that (3) holds. Without loss of generality, assume $\Om\subseteq  \Ombar  \subseteq \BB=B(0,4)$. It is well-known that $\BB$ is an $(1,p)$-extension domain. Hence, in order to produce a bounded extension operator $\mathcal{E}:W^{1,p}(\Om)\to W^{1,p}(\RR^n)$, it is enough to produce an extension operator $E:W^{1,p}(\Om)\to W^{1,p}(\BB)$.
		
		Let $f\in W^{1,p} (\Om)$. Since \cref{measure-density} holds, we can apply \Cref{frac-extension} to get $Ef:\BB\to \RR$ with $Ef|_{\Om}=f$ and $s_0\in(0,1)$ such that \cref{eq1} holds. Combining \cref{bbm} with \cref{eq1}, we get, for any $s\in (s_0,1)$, $$ 
		(1-s) \I{\BB}\I{\BB}\frac{|Ef(x)-Ef(y)|^p}{|x-y|^{n+sp}}\d y \d x 
		\leqa \lp  \I{\Om} |\nabla f|^p
		+ \I{\Om} |f|^p\rp . 
		$$ 
		Now, we take limit as $s\to 1-$ in this inequality and then apply \Cref{bbm-formula} to conclude that 
		$$ 
		\I{\BB} |\nabla Ef|^p \leq C(n,p,\Om) \lp \I{\Om} |\nabla f|^p +\I{\Om}|f|^p\rp .  $$ 
		To complete the proof, we need to show that $Ef\in L^p(\BB)$.
		Let $v$ is the zero-extension of $|f|$ to $\RR^n$. For $x\in \BB\setminus \Om$. From definition \cref{def1} and \Cref{lem:whitney}, we have
		\begin{equation*}\label{mx}
			\begin{split}
				|Ef(x)|
				&=\abs{\sum_{Q\in \mathcal{W}_x}(f)_{B_Q^\star}\varphi_Q(x)}\\
				&\leqa   \sum_{Q\in \mathcal{W}_x}  \Ia{20Q} |v(y)|\d y \\
				&\leqa \mathcal{M}(v)(x).
			\end{split}     
		\end{equation*}
		Hence,
		\begin{equation*}
			\|Ef\|_{L^p(\BB\setminus \Om)}^p
			\leqa \I{\BB\setminus\Om} |\mathcal{M}(v)(x)|^p\d x \leqa \I{\RR^n}  |\mathcal{M}(v)(x)|^p\d x 
		\end{equation*}
		Using \Cref{HLMI}, we get that
		\begin{equation*}
			\|Ef\|_{L^p(\BB\setminus \Om)}^p
			\leqa \I{\Om}|f(x)|^p\d x 
		\end{equation*}
		Hence
		\begin{equation*}\label{mx1}
			\|Ef\|_{L^p(\BB)}^p \leqa \I{\Om}|f(x)|^p\d x 
		\end{equation*}
		
		So, we have proven equivalence of (1), (2), and (3).
		
		\textbf{(4 $\implies$ 5):} Trivial.
		
		\textbf{(1 $\implies$ 4):} Follows from the same proof as in \cite{BoBrMi}, with the help of \cite[Corollary 4.9]{Heronkoskela}.

		\textbf{(5 $\implies$ 1):} As above, we establish the existence of an extension operator $E:W^{1,p}(\Om)\to W^{1,p}(\BB)$. We show that the Whitney operator defined in \Cref{frac-extension} does the job for us.
		
		Let $f\in W^{1,p} (\Om)$. Since \cref{measure-density} holds, we get from \Cref{frac-extension},
		$$
		(1-s)\I{\BB}\I{\BB}\frac{|Ef(x)-Ef(y)|^p}{|x-y|^{n+sp}}\d y \d x 
		\leq C(n,p,\Om) (1-s) \I{\Om}\I{\Om}\frac{|f(x)-f(y)|^p}{|x-y|^{n+sp}} \d y \d x.
		$$
		Taking limit as $s\to1-$ on both sides, we get by the hypothesis,
		$$
		\limsup\limits_{s\to 1-} (1-s)\I{\BB}\I{\BB}\frac{|Ef(x)-Ef(y)|^p}{|x-y|^{n+sp}}\d y \d x 
		\leq C(n,p,\Om) K(n,p) \I{\Om} \abs{\nabla f}^p \d x <\infty.
		$$
		Now we can apply \Cref{bbm-formula} to conclude that $Ef\in \W{1,p}(\BB)$.
		
		This completes the proof.
	\end{proof}

	\vspace{1cm}
	
	\section{BBM domains which are not $(1,p)$-extension domains}\label{sec-examples}
	
	We want to provide a class of examples of open sets supporting the BBM inequality beyond the class of domains with measure density condition. We do it in our next result using the notion of Haj\l asz Sobolev spaces (see \Cref{preli} for definitions).	
	\begin{proposition}\label{bbmeq}
		Let $1<p<\infty$, $s\in(0,1)$, and $\Om\subseteq \RR^n$ be a bounded domain such that $\Mdot{1,p}(\Om)= \Wdot{1,p}(\Om)$. Then, for all $f\in \Wdot{1,p}(\Om)$,  
		\begin{equation*}
			(1-s)\I{\Om}\I{\Om}\frac{|f(x)-f(y)|^p}{|x-y|^{n+sp}}\d y \d x
			\leq C(n,p, \Om) \I{\Om} |\nabla f|^p.
		\end{equation*}
	\end{proposition}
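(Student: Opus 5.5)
The plan is to use the Haj\l asz gradient directly. Since $\Mdot{1,p}(\Om)=\Wdot{1,p}(\Om)$ and both are complete seminormed spaces modulo constants, with the Haj\l asz seminorm always dominating a constant multiple of the Sobolev one (see \cite{HKT-8}), the open mapping theorem gives a constant $C(n,p,\Om)$ such that every $f\in\Wdot{1,p}(\Om)$ admits a nonnegative $g\in L^p(\Om)$ with $\|g\|_{L^p(\Om)}\le C[f]_{W^{1,p}(\Om)}$ and
\[
|f(x)-f(y)|\le |x-y|\,(g(x)+g(y))\quad\text{for a.e. } x,y\in\Om .
\]
If one prefers to avoid the open mapping argument, one can instead take the equality of spaces to include equivalence of seminorms, which is the standard reading in \cite{HKT-8}.

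We split the double integral according to whether $|x-y|<d:=\diam(\Om)$, which always holds for $x,y\in\Om$. The pointwise bound gives $|f(x)-f(y)|^p\le 2^{p-1}|x-y|^p(g(x)^p+g(y)^p)$. By symmetry,
\[
\I{\Om}\I{\Om}\frac{|f(x)-f(y)|^p}{|x-y|^{n+sp}}\d y\d x\le 2^p\I{\Om} g(x)^p\I{B(x,d)}|x-y|^{p-sp-n}\d y\d x .
\]
The inner integral equals $\omega_{n-1}\,d^{p(1-s)}/(p(1-s))$, where $\omega_{n-1}$ is the surface measure of the unit sphere. Multiplying by $(1-s)$ cancels the singular factor, leaving
\[
(1-s)[f]_{W^{s,p}(\Om)}^p\le \frac{2^p\omega_{n-1}}{p}\max\{1,d^{p}\}\,\|g\|_{L^p(\Om)}^p\le C(n,p,\Om)\,[f]_{W^{1,p}(\Om)}^p .
\]

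The computation is routine, so the only real obstacle is the first step: obtaining the quantitative norm bound $\|g\|_{L^p}\lesssim\|\nabla f\|_{L^p}$ from the mere set equality. Working modulo constants, I would apply the closed graph theorem to the identity map $\Wdot{1,p}(\Om)\to\Mdot{1,p}(\Om)$. Its graph is closed because convergence in either seminorm, after normalising averages on a fixed ball, implies local $L^1$ convergence on $\Om$, which forces the two limits to agree. Completeness of $\Mdot{1,p}(\Om)$ modulo constants follows from a standard Cauchy-sequence argument in \cite{HKT-8}. Once this bound is in place, the estimate holds for every $s\in(0,1)$ with a constant independent of $s$.
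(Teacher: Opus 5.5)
Your proposal is correct and is essentially the paper's argument: the Haj\l asz pointwise bound, symmetrisation, and integration of $|x-y|^{p-sp-n}$ in polar coordinates so that the factor $1-s$ cancels, which gives a constant independent of $s$. The one addition is your open mapping (or closed graph) justification of $\|g\|_{L^p(\Om)}\lesssim[f]_{W^{1,p}(\Om)}$, which the paper uses only implicitly in its final line; your argument for it, via the bounded inclusion $\Mdot{1,p}(\Om)\hookrightarrow\Wdot{1,p}(\Om)$ and completeness modulo constants, is sound.
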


	\begin{proof}
		Let $f\in \Wdot{1,p}(\Om)$. Since $\Mdot{1,p}(\Om)= \Wdot{1,p}(\Om)$, there exist $g\in L^p(\Om)$ such that
		\begin{equation*}
			|f(x)-f(y)| \leq |x-y|(g(x)+g(y)).
		\end{equation*}
		Hence,
		\begin{equation*}
			\begin{split}
				(1-s)\I{\Om}\I{\Om}\frac{|f(x)-f(y)|^p}{|x-y|^{n+sp}}\d y \d x & \leq 2^{p-1}(1-s)\I{\Om}\I{\Om}  \frac{g(x)^p+ g(y)^p}{|x-y|^{n+sp-p}}\d y \d x\\ 
				&\leq 2^p (1-s)\I{\Om}\I{\Om}\frac{g(x)^p}{|x-y|^{n+sp-p}}\d y \d x.
			\end{split}
		\end{equation*}
		
		For a fixed $x\in \Om$, consider the change of variables $z=x-y$. We get
		\begin{equation}\label{bbmf}
			\begin{split}
				(1-s)\I{\Om}\I{\Om}\frac{|f(x)-f(y)|^p}{|x-y|^{n+sp}}\d y \d x 
				&\leq 2^p (1-s) \I{\Om}\I{B(0,2\diam(\Om))}\frac{g(x)^p}{|z|^{n+sp-p}}\d z\d x \\
				&\leq C(n,p, \Om) \I{\Om} g(x)^p\d x .
			\end{split}
		\end{equation}
		Since $\Mdot{1,p}(\Om)= \Wdot{1,p}(\Om)$, by using \cref{bbmf}  the proof follows.
	\end{proof}
	The above result allows us to construct specific examples of domains where the BBM inequality holds despite the failure of the $(1,p)$-extension property.
	\begin{example}\label{cusp-example}
		For $\lambda>1$, define 
		$$
		\Om_\lambda:= \lc  (x_1,x')\ \big|\ x'\in \RR^{n-1},\ x_1\in(0,1),\ |x'|< x_1^\lambda \ \rc .
		$$
		A close reading of the proof of \cite[Theorem 1.1]{EKMZ} shows that for $\Om_\lambda$, we have $\Mdot{1,p}(\Om_\lambda)= \Wdot{1,p}(\Om_\lambda)$. Although the statement in \cite[Theorem 1.1]{EKMZ} is for the nonhomogeneous case, the same proof works for the homogeneous case as well. Hence, by \Cref{bbmeq}, $\Om_\lambda$ supports the BBM inequality, but since it does not satisfy measure density condition, it cannot be a $(1,p)$-extension domain by \cite{HKT-8}. 
	\end{example}

	\vspace{1cm} 	 	\section{Self-improvement of extension}	\label{se}

	Throughout this section, we shall assume that $\Om \subseteq \bar{\Om} \subseteq B(0,1)$. Note that, even if we prove \Cref{th-self-improve} with this additional restriction, the general result follows immediately by a scaling argument. Consider $\mathcal{W}=\mathcal{W}(\RR^n\setminus \Ombar )$. For each $Q \in \mathcal{W}$, we define $\Tilde{Q}$ to be a cube having the same centre with $\diam (\Tilde{Q})= \diam(Q)^{\frac{1}{s}}$.

	\begin{lemma}\label{th1}
		Let $\Om\subseteq \RR^n$ be an open set with $|\partial \Om|=0$. Let $p \in (1,\infty)$, and $ s \in(0,1)$. Assume further that $E_1: \Wdot{1,p}(\Om)\to \Wdot{s,p}(\RR^n)$ is a bounded extension operator. Define another extension operator $E_2$ by setting        \begin{equation}\label{eqC.13}
			E_2f(x):= \begin{cases}
				\sum_{Q\in \mathcal{W}}(E_1f)_{\Tilde{Q}}\varphi_Q(x), &\ {\rm if}\ x\in B(0,1)\setminus\Ombar ,\\
				f(x),&\ {\rm if}\ x\in \Om,\\
				E_1f(x), &\ {\rm if}\ x \in \partial\Om.
			\end{cases}
		\end{equation}
		If
		$$
		E_2 \lp C^\infty(\Om)\cap \Wdot{1,p}(\Om)\rp  \subseteq \Wdot{1,1}_{\loc}(B(0,1)),
		$$
		then  $E_2:  \Wdot{1,p}(\Om)\to \Wdot{1,p}(B(0,1))$ is bounded.
	\end{lemma}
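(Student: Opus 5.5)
The plan is to bound $\|\nabla E_2 f\|_{L^p(B(0,1))}$ by $C[f]_{W^{1,p}(\Om)}$ for smooth $f$ first, and then pass to general $f$ by density. Fix $f\in C^\infty(\Om)\cap\Wdot{1,p}(\Om)$ and set $u:=E_1 f\in \Wdot{s,p}(\RR^n)$, so that $[u]_{W^{s,p}(\RR^n)}\leq C[f]_{W^{1,p}(\Om)}$. By hypothesis $E_2f\in \Wdot{1,1}_{\loc}(B(0,1))$, so its distributional gradient is an $L^1_{\loc}$ function. Since $|\partial\Om|=0$, this gradient coincides a.e.\ with $\nabla f$ on $\Om$ and with the pointwise gradient of the locally finite smooth sum $\sum_Q u_{\tilde Q}\varphi_Q$ on $B(0,1)\setminus\Ombar$. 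The boundary carries no mass, and this is exactly where the $\Wdot{1,1}_{\loc}$ assumption is used: it rules out a singular part of the derivative concentrated on $\partial\Om$. So it suffices to prove
\[
\int_{B(0,1)\setminus\Ombar}|\nabla E_2f|^p\d x\lesssim [u]^p_{W^{s,p}(\RR^n)}.
\]

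For $x\in Q_0\in\mathcal{W}$ with $x\in B(0,1)\setminus\Ombar$, we have $\sum_Q\nabla\varphi_Q(x)=0$ by \Cref{POU}. Hence
\[
\nabla E_2f(x)=\sum_{Q\in\mathcal{W}_x'}\nabla\varphi_Q(x)\bigl(u_{\tilde Q}-u_{\tilde Q_0}\bigr),
\]
where $\mathcal{W}_x'$, the cubes whose $2Q$ contains $x$, has boundedly many members, all of size comparable to $\diam Q_0$. Each $\tilde Q$ has diameter $\diam(Q)^{1/s}\geq\diam Q$, since $\diam Q<1$, and all the cubes $\tilde Q$, $Q\in\mathcal{W}_x'$, lie in a cube $\hat Q_0$ of diameter $\approx \diam(Q_0)^{1/s}$. (Here centres at distance $\lesssim\diam Q_0\le \diam(Q_0)^{1/s}$ is not quite right, because the distance can exceed $\diam(Q_0)^{1/s}$; I will take $\hat Q_0$ to be the cube of side $\approx\diam Q_0$ containing them all.) By the fractional Poincaré inequality on cubes (\Cref{Ponce1} with $\Om$ a cube, $q=p$, or directly via Jensen), we get
\[
|u_{\tilde Q}-u_{\tilde Q_0}|\lesssim \frac{|\hat Q_0|}{|\tilde Q|}\,\ell(\hat Q_0)^{s}\Bigl(|\hat Q_0|^{-1}\int_{\hat Q_0}\int_{\hat Q_0}\frac{|u(y)-u(z)|^p}{|y-z|^{n+sp}}\d y\d z\Bigr)^{1/p}.
\]
The volume ratio factor is the troublesome part.

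The main obstacle is balancing the scales: $|\nabla\varphi_Q|\approx\diam(Q)^{-1}$ must be compensated by $\ell^{s}$ times the local Gagliardo energy, and this naively fails since $s<1$. This is precisely why the averaging cubes $\tilde Q$ are shrunk to size $\diam(Q)^{1/s}$. I would instead estimate
\[
|u_{\tilde Q}-u_{\tilde Q_0}|\le \fint_{\tilde Q}\fint_{\tilde Q_0}|u(y)-u(z)|\d y\d z
\]
directly. Since $|y-z|\approx\diam Q_0$, Hölder's inequality gives
\[
|u_{\tilde Q}-u_{\tilde Q_0}|^p\lesssim \diam(Q_0)^{n+sp}\fint_{\tilde Q}\fint_{\tilde Q_0}\frac{|u(y)-u(z)|^p}{|y-z|^{n+sp}}\d y\d z.
\]
Multiplying by $\diam(Q_0)^{-p}$ and integrating over $Q_0$ then yields
\[
\diam(Q_0)^{2n+sp-p}\,|\tilde Q|^{-1}|\tilde Q_0|^{-1}\int_{\tilde Q}\int_{\tilde Q_0}\frac{|u(y)-u(z)|^p}{|y-z|^{n+sp}}\d y\d z,
\]
with $|\tilde Q|\approx\diam(Q_0)^{n/s}$. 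The prefactor is then $\diam(Q_0)^{2n+sp-p-2n/s}$, which can be bounded by $1$ only when its exponent is nonnegative. If that fails, I would fall back on the precise choice of $\tilde Q$ together with bounded overlap of the $\tilde Q$ across Whitney generations, which needs to be checked.

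Summing over $Q_0$ then uses the finite overlap of the families $\{\tilde Q\times\tilde Q_0\}$ to bound the total by $[u]^p_{W^{s,p}(\RR^n)}\lesssim[f]^p_{W^{1,p}(\Om)}$. Finally, for general $f\in\Wdot{1,p}(\Om)$, I would approximate by smooth functions in $\Wdot{1,p}(\Om)$ (Meyers–Serrin). The map $f\mapsto E_2f$ is linear, and on $B(0,1)\setminus\Ombar$ it depends only on finitely many averages of $E_1f$ locally, which converge. The uniform bound and weak compactness then give $\nabla E_2 f\in L^p$ with the same estimate, while the identification on $\Om$ and on $\partial\Om$ (a null set) is preserved.
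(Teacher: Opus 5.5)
Your reduction to the complement, using the $\Wdot{1,1}_{\loc}$ hypothesis and $|\partial\Om|=0$, matches the paper. However, there is a genuine gap exactly at the step that carries the content of the lemma: you never establish $\int_{B(0,1)\setminus\Ombar}|\nabla E_2f|^p\lesssim[E_1f]^p_{W^{s,p}}$.

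First, the scales go the other way from what you initially write. Since $\diam Q<1$ and $1/s>1$, we have $\diam(\tilde Q)=\diam(Q)^{1/s}\le\diam Q$. So each $\tilde Q$ is a tiny cube in the middle of $Q$. For a neighbour $Q\neq Q_0$, all $y\in\tilde Q$ and $z\in\tilde Q_0$ then satisfy $|y-z|\approx\diam Q_0\gg\diam(\tilde Q_0)$. Your direct H\"older estimate is correct, but its prefactor $\diam(Q_0)^{2n+sp-p-2n/s}$ has exponent $2n+sp-p-2n/s=-(1-s)(p+2n/s)<0$ for every $s\in(0,1)$. The volume ratio $|\hat Q_0|/|\tilde Q|\approx\diam(Q_0)^{n-n/s}$ in your Poincar\'e variant blows up in the same way. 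So the ``fallback'' is not an exceptional case but the only case. Bounded overlap cannot rescue it either: the sets $\tilde Q\times\tilde Q_0$ are already essentially disjoint, and what is lost is a power of the scale, not an overlap count.

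The paper runs the same Whitney/H\"older computation and does not supply the missing idea. It passes this point in two steps, both invalid:
\begin{itemize}
\item It bounds $\fint_{\tilde Q_0}|E_1f(y)-E_1f(z)|^p\,dz$ by $\diam(\tilde Q_0)^{sp}\int_{\tilde Q_0}\frac{|E_1f(y)-E_1f(z)|^p}{|y-z|^{n+sp}}\,dz$ for $y\in\tilde Q$. This needs $|y-z|\lesssim\diam(\tilde Q_0)$, so it fails whenever $Q\neq Q_0$.
\item It replaces $\fint_{\tilde Q}$ by $\fint_{CQ_0}$, which silently drops the factor $|CQ_0|/|\tilde Q|$.
\end{itemize}
Your exponent is the honest version of that step.

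In fact the estimate appears to fail for an arbitrary bounded $E_1$. Take $p>n$, $\Om=B(0,\tfrac12)$, a standard reflection extension $E$, and a bounded linear functional $\Lambda\neq0$ on $\Wdot{1,p}(\Om)$. Set $E_1f:=Ef+\Lambda(f)g$. Here $g$ is a sum of bumps of height $a_k$, each supported in $\tilde Q$, over a sparse family of about $2^{k(n-1)}$ Whitney cubes with $\diam Q\approx 2^{-k}$ and pairwise disjoint $2Q$. Then $[g]^p_{W^{s,p}(\RR^n)}\lesssim\sum_k 2^{k(n-1)}a_k^p2^{-k(n-sp)/s}$. Meanwhile $\sum_Q g_{\tilde Q}\varphi_Q$ has $p$-energy $\approx\sum_k 2^{k(n-1)}a_k^p2^{-k(n-p)}$. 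With $a_k=2^{-k(p-1)/p}$, the first sum equals $\sum_k 2^{-kn(1/s-1)}<\infty$ and the second diverges. At the same time $\sum_k a_k<\infty$, and Morrey's inequality controls the $Ef$ part since $p>n$. Together these keep $E_2f$ continuous across $\partial\Om$ and in $\Wdot{1,1}_{\loc}(B(0,1))$. So the hypotheses of the lemma hold, but $\nabla E_2f\notin L^p$ whenever $\Lambda(f)\neq0$.

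The missing step is therefore not a technicality. A correct argument needs more than $W^{s,p}$-information on $E_1f$ near the cubes $\tilde Q$, or a different construction of $E_2$.
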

	\begin{proof}
		We use the hypothesis
		$$
		E_2 \lp C^\infty(\Om)\cap \Wdot{1,p}(\Om)\rp  \subseteq \Wdot{1,1}_{\loc}(B(0,1)),
		$$
		to conclude that $E_2f$ has a weak gradient $\nabla E_2f$, which is defined in $B(0,1)$. Thus, we only need to show the seminorm inequality.
		
		Let $x\in \BB\setminus \bar{\Om}$ and  $Q_0\in \mathcal{W}_x$. Then we have
		\begin{equation*}
			\begin{split}
				\nabla E_2 f (x)& = \nabla \big(\sum_{Q\in \mathcal{W}_x} (E_1f)_{\Tilde{Q}}\varphi_Q(x)\big)\\
				& = \nabla\big( \sum_{Q\in \mathcal{W}_x}\big((E_1f)_{\Tilde{Q}} -(E_1f)_{\Tilde{Q}_0} \big)\varphi_Q(x)\big)
			\end{split}
		\end{equation*}
		From \Cref{POU}, we know, for $Q\in \mathcal{W}_x$,  $|\nabla \varphi_Q(x)|\approx \diam(Q)^{-1}$. Hence,
		$$
		|\nabla E_2 f(x)|^p\leqa \lb \sum_{Q\in \mathcal{W}_x}\frac{1}{\diam(Q)}\Ia{\Tilde{Q}}\Ia{\Tilde{Q}_0}|(E_1f)(y)- (E_1f)(z)|\d z\d y \rb^p.
		$$
		By H\"older inequality, we get that
		\begin{equation*}
			\begin{split}
				|\nabla E_2 f(x)|^p
				&  \leqa \lb \sum_{Q\in \mathcal{W}_x}\frac{1}{\diam(Q)}\Ia{\Tilde{Q}}\lp \Ia{\Tilde{Q}_0}|(E_1f)(y)- (E_1f)(z)|^p\d z\rp ^\frac{1}{p} \d y \rb^p\\
				&  \leqa \lb \sum_{Q\in \mathcal{W}_x}\frac{1}{\diam(Q)}\Ia{\Tilde{Q}}\lp \I{\Tilde{Q}_0}\frac{|(E_1f)(y)- (E_1f)(z)|^p}{|y-z|^n}\d z\rp ^\frac{1}{p} \d y \rb^p.
			\end{split}
		\end{equation*}
		This implies
		$$|\nabla E_2 f(x)|^p
		\leqa \lb \sum_{Q\in \mathcal{W}_x}\frac{\diam(\tilde{Q}_0)^s}{\diam(Q)}\Ia{\Tilde{Q}}\lp \I{\Tilde{Q}_0}\frac{|(E_1f)(y)- (E_1f)(z)|^p}{|y-z|^{n+sp}}\d z\rp ^\frac{1}{p} \d y \rb^p.
		$$
		Now $\diam(\tilde{Q})= \diam(Q)^\frac{1}{s}$ and, since $Q, Q_0\in \mathcal{W}_x$, we have that $ \diam(Q_0) \approx \diam(Q)$ and that there exist a $C>0$ such that $Q\subseteq C Q_0 $. Thus, the above inequality implies
		$$
		|\nabla E_2 f(x)|^p
		\leqa \lb \sum_{Q\in \mathcal{W}_x}\Ia{CQ_0}\lp \I{B(0,1)}\frac{|(E_1f)(y)- (E_1f)(z)|^p}{|y-z|^{n+sp}}\d z\rp ^\frac{1}{p} \d y \rb^p.
		$$
		We use (5) of \Cref{lem:whitney} and then take supremum over all cubes containing $x$,
		\begin{equation*}\label{fixed-s-ext-eq1}
			\begin{split}
				|\nabla E_2 f(x)|^p
				\leqa \lb \mathcal{M}\lp \I{B(0,1)}\frac{|(E_1f)(\cdot)- (E_1f)(z)|^p}{|\cdot-z|^{n+sp}}\d z\rp ^{\frac{1}{p}}(x)\rb^p.
			\end{split}
		\end{equation*}
		Now, by the boundedness of $E_1$ and \Cref{HLMI}, we get that
		\begin{equation*}
			\|\nabla E_2 f\|_{L^p(B(0,1)\setminus \bar{\Om})}\leqa  \|\nabla f(x)\|_{L^{p}(\Om)}.
		\end{equation*}
		The proof follows.
	\end{proof}

	\begin{lemma}\label{E2-in-fracsob}
		Let $p\in(1,\infty)$ and $s\in(0,1)$. Assume that $f\in C^\infty(\Om)\cap\dot W^{1,p}(\Om)$, $E_1f\in \dot W^{s,p}(B(0,1))$ is an extension of $f$, and that $E_2f$ is defined by \cref{eqC.13}. Then $E_2f\in \dot W^{s,p}(B(0,1))$.
	\end{lemma}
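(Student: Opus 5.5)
We prove the lemma by mimicking the proof of \Cref{frac-extension}, with $E_1f$ playing the role of $f$ and the cubes $\Tilde{Q}$ replacing the balls $B_Q^\star$. Write $\BB=B(0,1)$ and $F:=E_1f\in\Wdot{s,p}(\RR^n)$. Since $|\partial\Om|=0$ (this is implicit in the section's setting), we split
\[
[E_2f]^p_{W^{s,p}(\BB)}\le [f]^p_{W^{s,p}(\Om)}+2\I{\BB\setminus\Om}\I{\Om}\frac{|E_2f(x)-f(y)|^p}{|x-y|^{n+sp}}\d y\d x+\I{\BB\setminus\Om}\I{\BB\setminus\Om}\frac{|E_2f(x)-E_2f(y)|^p}{|x-y|^{n+sp}}\d y\d x.
\]
The first term is at most $[F]^p_{W^{s,p}(\RR^n)}<\infty$, because $F$ extends $f$. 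The goal is to bound each remaining term by a constant times $[F]^p_{W^{s,p}(\RR^n)}$.

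\emph{Geometry of $\Tilde{Q}$.} For $Q\in\mathcal W$ we have $\diam(Q)<1$ and $s<1$, so $\diam(\Tilde Q)=\diam(Q)^{1/s}\le\diam(Q)$. Thus $\Tilde Q\subseteq Q$ and $\Tilde Q$ is concentric with $Q$. By \Cref{lem:whitney}(3), every point $u\in\Tilde Q$ satisfies $\dist(u,\Om)\approx\diam(Q)$. Since $\Tilde Q$ lies outside $\Om$, averaging over it gives no measure-density comparison with $\Om$; everything will be expressed through $F$ on the exterior cubes.

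\emph{Mixed term.} Take $x\in\BB\setminus\Om$, $Q_0\in\mathcal W_x$ and $y\in\Om$. As in \cref{I_2},
\[
|E_2f(x)-f(y)|\le\sum_{Q\in\mathcal W_x}\varphi_Q(x)\Ia{\Tilde Q}|F(z)-F(y)|\d z .
\]
For $z\in\Tilde Q$ we have $|z-y|\lesssim|x-y|$, because $|x-y|\ge d(x,\Om)\approx\diam Q$ and $|x-z|\lesssim\diam Q$. The key issue is the ratio $|Q|/|\Tilde Q|=\diam(Q)^{n(1-1/s)}$, which is large. Because of it we cannot pass to a maximal function directly, as was done in \cref{neeq}. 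We therefore use the cruder Jensen bound
\[
\Ia{\Tilde Q}|F(z)-F(y)|^p\d z\le |\Tilde Q|^{-1}\I{Q}|F(z)-F(y)|^p\d z .
\]
This yields an integral of $|F(z)-F(y)|^p/|z-y|^{n+sp}$, weighted by $\I{Q}|x-y|^{-n-sp}\d x\cdot|\Tilde Q|^{-1}$.

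This exponent loss is the main obstacle. To handle it I would use the qualitative hypotheses, since only finiteness is claimed. Because $f\in C^\infty(\Om)$ and the averages $(F)_{\Tilde Q}$ are finite, $E_2f$ is locally Lipschitz on $\BB\setminus\Ombar$, with $|\nabla E_2f|\lesssim\diam(Q)^{-1}\,\mathrm{osc}$ of the averages. I would then estimate the exterior piece via the Whitney geometry, using differences $|(F)_{\Tilde Q}-(F)_{\Tilde R}|$. These are bounded by Hölder and the fractional seminorm of $F$ on $Q\cup R$, which gives
\[
|(F)_{\Tilde Q}-(F)_{\Tilde R}|^p\lesssim \diam(\Tilde Q)^{-n}\,\diam(Q)^{n+sp}\I{CQ}\I{CR}\frac{|F(u)-F(v)|^p}{|u-v|^{n+sp}}\d u\d v .
\]
Summing with the bounded overlap from \Cref{lem:whitney}(5) gives finiteness cube by cube.

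\emph{Exterior term.} Split into $X_1(x)$ and $X_2(x)$ as in \cref{eq15}. On $X_2$, use the Lipschitz bound of the partition of unity together with the difference estimate above; on $X_1$, use \cref{eq3.13}. The only new ingredient compared with \Cref{frac-extension} is the factor coming from $\Tilde Q$ versus $Q$. I would try to absorb this factor by choosing to average $F$ over $Q$ itself in the intermediate comparisons, using the triangle inequality $|(F)_{\Tilde Q}-F(u)|\le|(F)_{\Tilde Q}-(F)_Q|+|(F)_Q-F(u)|$. The term $|(F)_{\Tilde Q}-(F)_Q|$ is where the deficit must be controlled. If it cannot be controlled uniformly, I would settle for summability over the (countably many, bounded-size) Whitney cubes in $\BB$, which suffices for membership $E_2f\in\Wdot{s,p}(\BB)$.
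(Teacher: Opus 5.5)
\textbf{There is a genuine gap, and it sits at the obstacle you flag but do not resolve.} Every per-cube bound you write carries a factor that blows up as $\diam(Q)\to 0$.
\begin{itemize}
\item In the mixed term this factor is $|Q|/|\widetilde Q|=\diam(Q)^{n(1-1/s)}$.
\item In the exterior term, Jensen gives $|(F)_{\widetilde Q}-(F)_{\widetilde R}|^p\le \frac{(C\diam Q)^{n+sp}}{|\widetilde Q|\,|\widetilde R|}\int_{\widetilde Q}\int_{\widetilde R}\frac{|F(u)-F(v)|^p}{|u-v|^{n+sp}}\,du\,dv$. Note there are two volume factors here, not the single $\diam(\widetilde Q)^{-n}$ you wrote. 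After multiplying by the weight $\diam(Q)^{n-sp}$ carried by pairs near $Q$, you are left with $\diam(Q)^{2n(1-1/s)}$ in front of the local Gagliardo energy.
\end{itemize}
Infinitely many Whitney cubes of $\BB\setminus\Ombar$ accumulate at $\partial\Om$. So ``finiteness cube by cube'' says nothing about the sum, and local Lipschitz continuity of $E_2f$ only controls compact subsets of $\BB\setminus\Ombar$. Nor can $f\in C^\infty(\Om)$ help: since $\widetilde Q\subseteq Q\subseteq\RR^n\setminus\Ombar$, the values of $E_2f$ off $\Om$ depend only on $E_1f$ outside $\Om$. Nothing is assumed about those values beyond $[E_1f]_{W^{s,p}}<\infty$. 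Your final sentence (``settle for summability'') is therefore the entire proof, and no argument is given for it.

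\textbf{No argument can be given when $sp<n$, because the statement, which allows an arbitrary extension $E_1f$, fails there.} Take the following data:
\begin{itemize}
\item $f\equiv 0$;
\item Whitney cubes $Q_k\subseteq\BB$ with centres $c_k$, $\diam(Q_k)\le 2^{-k}$, and $3Q_k$ pairwise disjoint (e.g.\ along a segment ending at a point of $\partial\Om$);
\item $\psi\in C_c^\infty(B(0,1))$ equal to $1$ on the cube of diameter $1$ centred at $0$;
\item $h_k:=\diam(Q_k)^{-(n-sp)/p}$, and $E_1f:=\sum_k h_k\psi\big((\cdot-c_k)/\diam(\widetilde Q_k)\big)$.
\end{itemize}
For large $k$ the $k$-th bump lives inside $Q_k$. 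Hence $E_1f=0$ on $\Om$, $(E_1f)_{\widetilde Q_k}=h_k$, and $(E_1f)_{\widetilde Q}=0$ for every other $Q$. By scaling, $[E_1f]_{W^{s,p}(\RR^n)}\le[\psi]_{W^{s,p}(\RR^n)}\sum_k\diam(Q_k)^{\frac{n-sp}{p}(\frac1s-1)}<\infty$.

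On the other hand, $E_2f=\sum_k h_k\varphi_{Q_k}$ on $\BB\setminus\Ombar$. For the usual Whitney partition of unity, $\varphi_{Q_k}\gtrsim 1$ on $Q_k$, and $\varphi_{Q_k}=0$ on $3Q_k\setminus 2Q_k$. So the pairs in $Q_k\times(3Q_k\setminus 2Q_k)$ contribute $\gtrsim h_k^p\diam(Q_k)^{n-sp}=1$ for each $k$, and $[E_2f]_{W^{s,p}(\BB)}=\infty$.

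\textbf{The paper's proof does not get around this; it asserts the estimates you could not prove.}
\begin{itemize}
\item Its ``standard pointwise estimate'' for $I_{\Om c}$ has $\chi_\Om$ inside the maximal function. In the example above its right-hand side is identically $0$, while $|E_2f(x)-f(y)|=h_k\varphi_{Q_k}(x)>0$ on $Q_k$.
\item Its bound $|(E_1f)_{\widetilde Q}-c_x|\lesssim\diam(\widetilde Q)^s\mathcal M(G)(x)$ cannot hold uniformly: combined with the rest of its $I_{cc}$ argument it would give $I_{cc}<\infty$ in the example. It fails for exactly the reason you identified. The cubes $\widetilde Q$, $Q\in\mathcal W_x$, have diameter $\diam(Q)^{1/s}\ll\diam(Q)$ but are separated by distances $\approx\diam(Q)$. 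So neither the fractional Poincar\'e comparison nor domination by $\mathcal M(G)(x)$ is available without a negative power of $\diam(Q)$.
\end{itemize}
Your diagnosis is the right one. What is missing is an additional hypothesis (e.g.\ averages over sets comparable to $Q$, or structural information on $E_1$), not a sharper estimate.
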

	
	\begin{proof}
		Set $\mathbb B:=B(0,1)$. Since $E_2f=E_1f=f$ on $\Om$, we write
		\[
		[E_2f]_{W^{s,p}(\mathbb B)}^p
		=\I{\BB}\I{\BB}
		\frac{|E_2f(x)-E_2f(y)|^p}{|x-y|^{n+sp}}\,dx\,dy
		=:I_{\Om\Om}+I_{\Om c}+I_{cc},
		\]
		where the three terms correspond to $(x,y)\in \Om\times\Om$, 
		$\Om\times(\mathbb B\setminus\Om)$, and
		$(\mathbb B\setminus\Om)\times(\mathbb B\setminus\Om)$.
		
		For $I_{\Om\Om}$, we simply have
		\[
		I_{\Om\Om}
		=\I{\Om}\I{\Om}
		\frac{|E_1f(x)-E_1f(y)|^p}{|x-y|^{n+sp}}\,dx\,dy
		\le [E_1f]_{W^{s,p}(\mathbb B)}^p.
		\]
		
		To estimate $I_{\Om c}$, fix $y\in\Om$ and $x\in\mathbb B\setminus\Om$.
		Using the definition of $E_2$, the Lipschitz bounds for the partition of
		unity $\{\varphi_Q\}_{Q\in\mathcal W}$, and the usual Whitney geometry, one
		obtains the standard pointwise estimate
		\[
		|E_2f(x)-f(y)|
		\le C\,\mathcal M\!\left(
		\frac{|E_1f(\cdot)-E_1f(y)|}{|\cdot-y|^{\frac np+s}}
		\chi_{\Om}(\cdot)
		\right)(x),
		\]
		where $\mathcal M$ is the Hardy--Littlewood maximal operator on $\mathbb R^n$
		(after extending the displayed function by $0$ outside $\Om$).
		Hence, by \Cref{HLMI},
		\[
		\int_{\mathbb B\setminus\Om}
		\frac{|E_2f(x)-f(y)|^p}{|x-y|^{n+sp}}\,dx
		\le
		C\int_{\Om}
		\frac{|E_1f(x)-E_1f(y)|^p}{|x-y|^{n+sp}}\,dx.
		\]
		Integrating in $y\in\Om$ yields
		\[
		I_{\Om c}\le C\,[E_1f]_{W^{s,p}(\mathbb B)}^p.
		\]
		
		It remains to estimate $I_{cc}$. Let $x,y\in\mathbb B\setminus\Om$.
		Write
		\[
		E_2f(x)=\sum_{Q\in\mathcal W_x}(E_1f)_{\widetilde Q}\varphi_Q(x),
		\qquad
		E_2f(y)=\sum_{R\in\mathcal W_y}(E_1f)_{\widetilde R}\varphi_R(y),
		\]
		and choose a reference cube $Q_x\in\mathcal W_x$.
		Set
		\[
		c_x:=(E_1f)_{C\widetilde Q_x},
		\qquad
		G(u):=\left(\int_{\mathbb B}\frac{|E_1f(u)-E_1f(v)|^p}{|u-v|^{n+sp}}\,dv\right)^{1/p},
		\quad u\in\mathbb B.
		\]
		Since $\operatorname{diam}(\widetilde Q)=\operatorname{diam}(Q)^{1/s}$, we have
		$\operatorname{diam}(\widetilde Q)^s=\operatorname{diam}(Q)$.
		Using cancellation $\sum_Q(\varphi_Q(x)-\varphi_Q(y))=0$, the Lipschitz bound
		\[
		|\varphi_Q(x)-\varphi_Q(y)|\lesssim \frac{|x-y|}{\operatorname{diam}(Q)},
		\]
		and a fractional Poincar\'e estimate on the enlarged Whitney cubes, we get
		\[
		|(E_1f)_{\widetilde Q}-c_x|
		\lesssim \operatorname{diam}(\widetilde Q)^s\,\mathcal M(G)(x)
		= \operatorname{diam}(Q)\,\mathcal M(G)(x).
		\]
		Therefore,
		\[
		|E_2f(x)-E_2f(y)|
		\lesssim |x-y|\bigl(\mathcal M(G)(x)+\mathcal M(G)(y)\bigr).
		\]
		Hence
		\[
		I_{cc}
		\lesssim
		\I{\BB\setminus\Om}\I{\BB\setminus\Om}
		\frac{\bigl(\mathcal M(G)(x)+\mathcal M(G)(y)\bigr)^p}{|x-y|^{n+sp-p}}\,dx\,dy.
		\]
		Since $\mathbb B$ is bounded and $p>sp$, with a change of variables, Fubini's theorem, symmetry and \Cref{HLMI}
		\[
		I_{cc}\lesssim \int_{\mathbb B}(\mathcal M(G)(x))^p\,dx
		\lesssim \int_{\mathbb B}G(x)^p\,dx,
		\]
		by the $L^p$-boundedness of $\mathcal M$. Finally,
		\[
		\int_{\mathbb B}G(x)^p\,dx
		=
		\iint_{\mathbb B\times\mathbb B}
		\frac{|E_1f(x)-E_1f(y)|^p}{|x-y|^{n+sp}}\,dx\,dy
		=
		[E_1f]_{W^{s,p}(\mathbb B)}^p.
		\]
		Combining the estimates for $I_{\Om\Om}$, $I_{\Om c}$, and $I_{cc}$ gives
		\[
		[E_2f]_{W^{s,p}(\mathbb B)}\leqa \,[E_1f]_{W^{s,p}(\mathbb B)},
		\]
		as claimed.
	\end{proof}

	The following result is motivated by classical Banach-Zarski\u i theorem (see \cite{Mattila}). We give a self-contained treatment here.
	\begin{lemma}\label{holder-AC}
		Let $E\subset \RR$ and let $f\in C^{0,\alpha}(\RR)\cap C^{1}(\RR\setminus E)$
		for some $\alpha\in (0,1)$. Assume that $\HH^\alpha(E)=0$ and
		\[
		\int_{\RR\setminus E} |f'(x)|\,\d x  < \infty.
		\]
		Then $f$ is absolutely continuous on $\RR$.
	\end{lemma}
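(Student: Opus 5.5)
We need to show that for every $\varepsilon>0$ there is a $\delta>0$ such that, for disjoint intervals $(a_i,b_i)$ with $\sum|b_i-a_i|<\delta$, we have $\sum|f(b_i)-f(a_i)|<\varepsilon$. The plan is to split each increment of $f$ into a part coming from $\RR\setminus E$, controlled by $\int|f'|$, and a part coming from $E$, controlled by Hölder continuity together with $\HH^\alpha(E)=0$.

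\textbf{Step 1: the contribution of $E$ is negligible.} First, reduce to $E$ closed. Since $f$ is $C^1$ on $\RR\setminus E$ (implicitly an open set), $E$ may be taken closed; otherwise replace $E$ by the complement of the largest open set where $f$ is $C^1$, a closed subset of $E$, so it still has $\HH^\alpha$ measure zero. Next, use $\HH^\alpha(E)=0$ in the form of the content: for any $\eta>0$, cover $E$ by countably many open intervals $J_k$ with $\sum|J_k|^\alpha<\eta$. By Hölder continuity,
\[
\operatorname{osc}_{J_k} f\le [f]_{C^{0,\alpha}}|J_k|^\alpha .
\]
Hence the total variation that $f$ can pick up inside $\bigcup_k J_k$ is at most $[f]_\alpha\eta$.

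\textbf{Step 2: the variation of $f$ on a single interval.} Fix $[a,b]$. The claim is
\[
|f(b)-f(a)|\le \int_{(a,b)\setminus E}|f'|+\sum_{k:\,J_k\cap[a,b]\neq\emptyset}\operatorname{osc}_{J_k\cap[a,b]}f .
\]
To prove it, use compactness: $[a,b]\cap E$ is compact, so finitely many $J_k$ cover it. The complement of their union in $[a,b]$ is a finite union of closed intervals lying in $\RR\setminus E$, where $f$ is $C^1$, so the fundamental theorem of calculus applies there. Chain the differences across the pieces and the covering intervals, giving the bound above.

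\textbf{Step 3: conclude absolute continuity.} Given $\varepsilon$, first choose $\eta$ with $[f]_\alpha\eta<\varepsilon/2$ and fix the cover $\{J_k\}$. The delicate point is summing the Step 2 bound over many disjoint $(a_i,b_i)$: a single $J_k$ may meet several of them, but since the intervals are disjoint, the oscillation contributions of one $J_k$ across them add up to at most
\[
\operatorname{osc}_{J_k}f\cdot(\text{a constant})?
\]
This is not true as it stands. The correct bound uses Hölder continuity on each piece $J_k\cap(a_i,b_i)$: it gives $\sum_i |J_k\cap(a_i,b_i)|^\alpha$, which can exceed $|J_k|^\alpha$. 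I expect this to be the main obstacle. My fix is to handle each $(a_i,b_i)$ separately: apply Step 2 to it with the cover $\{J_k\cap(a_i,b_i)\}$, and use instead the bound $\sum_k|J_k\cap I_i|^\alpha\le \sum_k|J_k|^\alpha$ obtained from a fresh fine cover of $E\cap \bigcup_i[a_i,b_i]$. Concretely, since $\HH^\alpha(E)=0$, choose the cover of $E$ once, and then, for each $k$, bound the total contribution of $J_k$ across all $i$ by the oscillation of $f$ on the convex hull, i.e. telescope the increments inside $J_k$ rather than summing absolute values. The cleanest way to arrange this is to note that the function $V(x)=\int_{(-\infty,x)\setminus E}|f'|$ dominates $f$ modulo a function of arbitrarily small variation. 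Show that $f$ is of bounded variation with $\mathrm{Var}(f;A)\le\int_{A\setminus E}|f'|$ for open $A$, using Step 2 on arbitrary partitions and letting $\eta\to0$. This gives $|f(b)-f(a)|\le\int_{(a,b)\setminus E}|f'|$ for every interval. Absolute continuity then follows from absolute continuity of the Lebesgue integral of $|f'|\in L^1(\RR\setminus E)$: choose $\delta$ so that $|A|<\delta$ implies $\int_{A\setminus E}|f'|<\varepsilon$.
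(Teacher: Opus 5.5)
Your final argument is correct, and it departs from the paper's proof at exactly the point that matters. The paper fixes a single cover $\{U_i\}$ of $E$ with $\sum_i|U_i|^\alpha<\varepsilon/(2K)$ before choosing $\delta$. It handles intervals $(a_j,b_j)\subset\RR\setminus E$ by the fundamental theorem of calculus. It bounds each interval meeting $E$ by $K(b_j-a_j)^\alpha$, asserting that these intervals lie in $\bigcup_i U_i$ and hence $\sum_j(b_j-a_j)^\alpha\le\sum_i|U_i|^\alpha$.

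That step is not valid, and it is the obstruction you identified in Step 3. An interval meeting $E$ need not lie in $\bigcup_i U_i$, since nothing ties $\delta$ to the sizes of the $U_i$. Moreover, $N$ disjoint intervals of length $\ell$ inside one $U_i$ give $N\ell^\alpha=N^{1-\alpha}(N\ell)^\alpha$, which $|U_i|^\alpha$ does not control.

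Your route never sums H\"older bounds over many intervals against one cover. For a fixed $[a,b]$ you use compactness of $E\cap[a,b]$ to cut $[a,b]$ into finitely many cover pieces (total H\"older contribution at most $[f]_\alpha\eta$) and finitely many closed pieces in $\RR\setminus E$ (fundamental theorem of calculus). You then let $\eta\to0$ with $[a,b]$ fixed to get $|f(b)-f(a)|\le\int_{(a,b)\setminus E}|f'|$. Because the cover may now depend on the interval, summing over disjoint intervals gives absolute continuity with $\delta$ depending only on $|f'|$, and also $\mathrm{Var}(f;(a,b))\le\int_{(a,b)\setminus E}|f'|$.

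The same inequality also follows from the Banach--Zarecki mechanism the paper alludes to. H\"older continuity and $\HH^\alpha(E)=0$ give $|f(E)|=0$. So by the intermediate value theorem almost every value between $f(a)$ and $f(b)$ is taken on $(a,b)\setminus E$. The components of that set have image intervals of total length at most $\int_{(a,b)\setminus E}|f'|$.

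In the write-up, delete the false starts; neither is needed.
\begin{itemize}
\item The Step 1 claim that the total variation of $f$ inside $\bigcup_k J_k$ is at most $[f]_\alpha\eta$ is wrong: H\"older continuity bounds the oscillation on $J_k$, not the variation.
\item The idea of telescoping increments inside one $J_k$ across different $(a_i,b_i)$ is also wrong.
\end{itemize}
Add one sentence to Step 2: the increment across a connected component of $\bigcup_l J_{k_l}\cap[a,b]$ is at most the sum of the oscillations of the overlapping $J_{k_l}$ forming it (chain through points in successive overlaps).

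Finally, your reduction to closed $E$ presupposes that $f$ is $C^1$ on an open set containing $\RR\setminus E$. This is harmless here, since in the application $E=L_w\cap\partial\Om$ is closed. The reduction also uses $|E|=0$, which follows from $\HH^\alpha(E)=0$, so that $\int|f'|$ is unchanged.
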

	
	\begin{proof}
		Let $K>0$ be a Hölder constant of $f$, i.e.
		\[
		|f(x)-f(y)| \le K |x-y|^\alpha \quad \text{for all } x,y\in\RR.
		\]
		
		Let $\varepsilon>0$. Since $\HH^\alpha(E)=0$, there exists a countable
		collection of open intervals $\{U_i\}_i$ covering $E$ such that
		\[
		\sum_i |U_i|^\alpha < \frac{\varepsilon}{2K}.
		\]
		
		Let $\delta_1 := \frac{\varepsilon}{2K}$.
		
		Also, since $|f'|\in L^1(\RR\setminus E)$, there exists $\delta_2>0$
		such that for every measurable set $A\subset \RR\setminus E$ with
		$|A|<\delta_2$,
		\[
		\int_A |f'(x)|\,\d x  < \frac{\varepsilon}{2}.
		\]
		
		Set $\delta := \min\{\delta_1,\delta_2\}$.
		
		Now let $\{(a_j,b_j)\}_{j=1}^N$ be a finite collection of pairwise disjoint
		intervals with
		\[
		\sum_{j=1}^N (b_j-a_j) < \delta.
		\]
		
		We estimate
		\[
		\sum_{j=1}^N |f(b_j)-f(a_j)|.
		\]
		
		Split each interval into two parts:
		\[
		(a_j,b_j) = \big((a_j,b_j)\cap E\big)\ \cup\ \big((a_j,b_j)\setminus E\big).
		\]
		
		\medskip
		
		\noindent
		\textbf{Contribution from $E$.}
		For each $j$,
		\[
		|f(b_j)-f(a_j)| \le K (b_j-a_j)^\alpha.
		\]
		Hence,
		\[
		\sum_{j=1}^N |f(b_j)-f(a_j)|
		\le K \sum_{j=1}^N (b_j-a_j)^\alpha.
		\]
		Since the intervals intersecting $E$ are contained in $\bigcup_i U_i$, we get
		\[
		\sum_{j:\,(a_j,b_j)\cap E\neq\emptyset} (b_j-a_j)^\alpha
		\le \sum_i |U_i|^\alpha < \frac{\varepsilon}{2K}.
		\]
		Thus,
		\[
		\sum_{j:\,(a_j,b_j)\cap E\neq\emptyset} |f(b_j)-f(a_j)|
		< \frac{\varepsilon}{2}.
		\]
		
		\medskip
		
		\noindent
		\textbf{Contribution from $\RR\setminus E$.}
		On each interval $(a_j,b_j)\subset \RR\setminus E$, the function is $C^1$,
		so by the fundamental theorem of calculus,
		\[
		|f(b_j)-f(a_j)| \le \int_{a_j}^{b_j} |f'(x)|\,\d x .
		\]
		Summing over those intervals,
		\[
		\sum_{j:\,(a_j,b_j)\subset \RR\setminus E} |f(b_j)-f(a_j)|
		\le \int_{\bigcup_j (a_j,b_j)\setminus E} |f'(x)|\,\d x .
		\]
		Since the total length is $<\delta\le \delta_2$, this is $<\varepsilon/2$.
		
		\medskip
		
		Combining both parts,
		\[
		\sum_{j=1}^N |f(b_j)-f(a_j)| < \varepsilon.
		\]
		
		This proves that $f$ is absolutely continuous.
	\end{proof}

	\begin{lemma}\label{slicing}
		Let $E \subset \RR^n$, and let $H \subset \RR^n$ be a fixed $(n-1)$-dimensional hyperplane. For $w \in H$, let $L_w$ denote the line orthogonal to $H$ passing through $w$.
		
		Suppose there exists a set $G \subset H$ with $\HH^{n-1}(G) > 0$ such that
		\[
		\HH^t(E\cap L_w)>0 \quad \text{for all } w \in G.
		\]
		Then
		\[
		\HH^{n-1+t}(E)>0.
		\]
	\end{lemma}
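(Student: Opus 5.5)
This is a Fubini-type inequality for Hausdorff measures, which I would prove via Frostman's lemma (\Cref{frostman}). Rotate and translate so that $H=\RR^{n-1}\times\{0\}$ and $L_w=\{w\}\times\RR$. I assume $E$ and $G$ are Borel; otherwise I first replace them by suitable Borel hulls, or work with Borel subsets of positive measure. The underlying tool is the classical inequality (Mattila, Theorem 7.7 / Federer 2.10.25):
\[
\int^*_{H}\HH^{t}(E\cap L_w)\,d\HH^{n-1}(w)\le C(n,t)\,\HH^{n-1+t}(E).
\]
If I may cite it, the lemma follows at once: the left side is positive because the integrand is positive on $G$ and $\HH^{n-1}(G)>0$. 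To keep the argument self-contained, I would prove this inequality directly by covering.

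\textbf{Covering argument.} Suppose, for contradiction, that $\HH^{n-1+t}(E)=0$.
\begin{enumerate}
\item Fix $\delta>0$ and $\varepsilon>0$. Choose a cover of $E$ by sets $B_j$ with $\diam B_j\le\delta$ and $\sum_j(\diam B_j)^{n-1+t}<\varepsilon$.
\item Replace each $B_j$ by a closed cube $Q_j\supseteq B_j$ of side $d_j=\diam B_j$. Write $Q_j=P_j\times I_j$, where $P_j\subset H$ is an $(n-1)$-cube and $I_j$ is an interval.
\item For each $w\in H$, the intervals $I_j$ with $w\in P_j$ cover $E\cap L_w$. Therefore
\[
\HH^t_{\delta'}(E\cap L_w)\le\sum_j d_j^{\,t}\chi_{P_j}(w),
\]
where $\delta'$ is a fixed multiple of $\delta$.
\item Integrate in $w$ over $H$ with respect to $\HH^{n-1}$, which is comparable to Lebesgue measure on $H$:
\[
\int^*_H\HH^t_{\delta'}(E\cap L_w)\,dw\le\sum_j d_j^{\,t}\,|P_j|\le C\sum_j d_j^{\,n-1+t}<C\varepsilon.
\]
\item Since $\varepsilon$ is arbitrary, $\HH^t_{\delta'}(E\cap L_w)=0$ for $\HH^{n-1}$-a.e.\ $w$, for each fixed $\delta$.
\item Take $\delta=1/k$ and a countable union of the exceptional null sets. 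This gives $\HH^t(E\cap L_w)=0$ for a.e.\ $w\in H$.
\item This contradicts $\HH^t(E\cap L_w)>0$ on $G$, since $\HH^{n-1}(G)>0$.
\end{enumerate}

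\textbf{Main obstacle.} The delicate point is measurability of $w\mapsto \HH^t_{\delta'}(E\cap L_w)$, which is needed to pass from an integral bound to "a.e.\ zero". I would sidestep it as follows: the majorant $\sum_j d_j^{\,t}\chi_{P_j}$ is Borel, so I only use upper integrals. From
\[
\int^*_H\HH^t_{\delta'}(E\cap L_w)\,dw<C\varepsilon
\]
for every $\varepsilon$, I get that the set $\{w:\HH^t_{\delta'}(E\cap L_w)>\eta\}$ has outer measure at most $C\varepsilon/\eta$, hence outer measure zero for every $\eta>0$. No measurability of $E$ is needed for this step. The remaining care is routine: choose $\delta'\approx\sqrt n\,\delta$ so that the intervals $I_j$ are admissible in the $\delta'$-content.
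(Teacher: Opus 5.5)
Your argument is correct, but it takes a different route from the paper. The paper uses Frostman's lemma in both directions. It picks, for each $w\in G$, a Frostman measure $\mu_w$ on $E\cap L_w$ with $\mu_w(B_r(x))\le r^t$. It glues these into $\mu(A)=\int_G\mu_w(A\cap L_w)\,d\HH^{n-1}(w)$, bounds $\mu(B_r(x))\le r^t\,\HH^{n-1}(\pi_H(B_r(x)))\lesssim r^{n-1+t}$, and then applies the converse half of \Cref{frostman}.

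You instead prove the null-set case of the Eilenberg/Federer~2.10.25 inequality directly from the definition of Hausdorff content. If $\HH^{n-1+t}(E)=0$, a cover by cubes $P_j\times I_j$ produces the Borel majorant $\sum_j d_j^t\chi_{P_j}$ of $w\mapsto\HH^t_\delta(E\cap L_w)$, with integral $\lesssim\varepsilon$. Chebyshev's inequality and countable subadditivity of outer measure then give $\HH^t(E\cap L_w)=0$ off an $\HH^{n-1}$-null set.

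The comparison:
\begin{itemize}
\item The Frostman route is shorter and more conceptual. However, it tacitly needs $E$ Borel, since \Cref{frostman} is stated for Borel sets and must be applied to the slices.
\item More seriously, for $\mu$ to be a well-defined measure with $\mu(E)>0$, that route needs an $\HH^{n-1}$-measurable choice $w\mapsto\mu_w$ and measurability of $G$. The paper does not address either point.
\item Your covering argument needs no measurability of $E$ or $G$ at all.
\end{itemize}

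So your preliminary reduction to Borel hulls and your announced use of \Cref{frostman} are superfluous: the argument you actually give never invokes them. Also, $\delta'=\delta$ already suffices, because $\diam I_j=d_j\le\delta$.

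In the application $E=\partial\Om$ is compact, so Borelness of $E$ is not an issue there. The measurable-selection issue in the Frostman construction remains even in that case, which your argument avoids entirely.
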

	
	\begin{proof}
		By \Cref{frostman}, for each $w \in G$ there exists a Borel measure $\mu_w$ supported on $E\cap L_w$ such that
		\[
		\mu_w(B_r(x)) \le r^t \quad \text{for all } x \in \RR^n,\ r>0.
		\]

		Define a measure $\mu$ on $\RR^n$ by
		\[
		\mu(A) := \int_G \mu_w(A \cap L_w)\, \d\HH^{n-1}(w).
		\]
		Then $\mu$ is supported on $E$.	For any ball $B_r(x)$ in $\RR^n$, we have
		\begin{align*}
			\mu(B_r(x))
			&= \int_G \mu_w(B_r(x))\, \d\HH^{n-1}(w)\\
			&\leq r^t \int_{\{w\in G\ |\ L_w \cap B_r(x) \neq \emptyset\}} \, \d\HH^{n-1}(w)\\
			&= r^t \HH^{n-1}\lp \{w\in G\ |\ L_w \cap B_r(x) \neq \emptyset\}\rp \\
			&\leq  r^t \HH^{n-1}\lp \pi_H\lp B_r(x)\rp  \rp ,
		\end{align*}
		where $\pi_H$ denotes the orthogonal projection to $H$.
		Thus, we have
		$$
		\mu(B_r(x)) \leq r^{n-1+t}. 
		$$
		Therefore, by \Cref{frostman}, $$\HH^{n-1+t}(E)>0.$$
	\end{proof}

	\begin{proof}[\textbf{Proof of \Cref{th-self-improve}}]
		Let $E_1: \Wdot{1,p}(\Om)\to \Wdot{s,p}(B(0,1))$ be a bounded extension operator. Define the operator $E_2$ as in \cref{eqC.13}. Let $f\in C^\infty(\Om)\cap \Wdot{1,p}(\Om)$, so that, by \Cref{E2-in-fracsob}, $E_2f\in \Wdot{s,p}(\BB)$. We choose a coordinate system which we get by \Cref{loss-sloan-cor}, so that $E_2f\in W^{s,p}(L_w)$ for $\HH^{n-1}$-a.e. $w\in H$ for any coordinate plane $H$. Since $sp>1$, from the hypothesis, and the fact that $L_w$ is one dimensional, we have $E_2f|_{L_w}\in C^{0,\frac{sp-1}{p}}(L_w)$. 
		
		Also, the proof of \Cref{th1} shows that
		\begin{equation*}
			\|\nabla E_2 f\|_{L^p(B(0,1)\setminus \bar{\Om})}\leqa  \|\nabla f(x)\|_{L^{p}(\Om)},
		\end{equation*}
		which implies that 
		\[
		\I{L_w\setminus \partial \Om} \abs{(E_2f)'(x)} \d\HH^1(x) <\infty
		\]
		for $\HH^{n-1}$ -a.e. $w\in H$.
		
		The proof will be complete, in view of \Cref{holder-AC,ACL}, if we can show that, for $\HH^{n-1}$-a.e. $w\in H$, we have $\HH^\frac{sp-1}{p}(L_w \cap \partial \Om)=0$.
		
		If this is not the case, there is some $G\subseteq H$ with $\HH^{n-1}(G)>0$ such that $\HH^\frac{sp-1}{p}(L_w \cap \partial \Om)>0$ for all $w\in G$. \Cref{slicing} then implies that $\HH^{n-1+\frac{sp-1}{p}}(\partial \Om)>0$ for all $w\in G$, which contradicts our hypothesis.
	\end{proof}

	\vspace{1cm}
	\providecommand{\bysame}{\leavevmode\hbox to3em{\hrulefill}\thinspace}
	\providecommand{\noopsort}[1]{}
	\providecommand{\mr}[1]{\href{http://www.ams.org/mathscinet-getitem?mr=#1}{MR~#1}}
	\providecommand{\zbl}[1]{\href{http://www.zentralblatt-math.org/zmath/en/search/?q=an:#1}{Zbl~#1}}
	\providecommand{\jfm}[1]{\href{http://www.emis.de/cgi-bin/JFM-item?#1}{JFM~#1}}
	\providecommand{\arxiv}[1]{\href{http://www.arxiv.org/abs/#1}{arXiv~#1}}
	\providecommand{\doi}[1]{\url{https://doi.org/#1}}
	\providecommand{\MR}{\relax\ifhmode\unskip\space\fi MR }
	\providecommand{\MRhref}[2]{%
		\href{http://www.ams.org/mathscinet-getitem?mr=#1}{#2}
	}
	\providecommand{\href}[2]{#2}

\end{document}